\newtheorem{theorem}{Theorem}[section]
\newtheorem{proposition}[theorem]{Proposition}
\newtheorem{lemma}[theorem]{Lemma}
\newtheorem{claim}[theorem]{Claim}
\newtheorem{definition}[theorem]{Definition}
\newtheorem{problem}[theorem]{Problem}
\newtheorem{setting}[theorem]{Setting}
\theoremstyle{definition}
\numberwithin{equation}{section}
\numberwithin{figure}{section}
\newcommand{\su}{\subseteq}
\newcommand{\spn}{\operatorname{span}}
\begin{document}
\title{Sharp bounds for rainbow matchings in hypergraphs}
\author{Cosmin Pohoata\thanks{Department of Mathematics at Emory University, Emory University, Atlanta, GA. Email: \href{cosmin.pohoata@emory.edu}{\nolinkurl{cosmin.pohoata@emory.edu}}. Research supported by NSF Award DMS-2246659.}
\and Lisa Sauermann\thanks{Institute for Applied Mathematics, University of Bonn, Germany.
Email: \href{sauermann@iam.uni-bonn.de}{\nolinkurl{sauermann@iam.uni-bonn.de}}. Research supported by the
DFG Heisenberg Program.}
\and  Dmitrii Zakharov\thanks{Department of Mathematics, Massachusetts Institute of Technology, Cambridge, MA.
Email: \href{zakhdm@mit.edu}{\nolinkurl{zakhdm@mit.edu}}.}}
\date{}
\maketitle

\begin{abstract}
Suppose we are given matchings $M_1,....,M_N$ of size $t$ in some $r$-uniform hypergraph, and let us think of each matching having a different color. How large does $N$ need to be (in terms of $t$ and $r$) such that we can always find a rainbow matching of size $t$? This problem was first introduced by Aharoni and Berger, and has since been studied by several different authors. For example, Alon discovered an intriguing connection with the Erd\H{o}s--Ginzburg--Ziv problem from additive combinatorics, which implies certain lower bounds for $N$.

For any fixed uniformity $r \ge 3$, we answer this problem up to constant factors depending on $r$, showing that the answer is on the order of $t^{r}$. Furthermore, for any fixed $t$ and large $r$, we determine the answer up to lower order factors. We also prove analogous results in the setting where the underlying hypergraph is assumed to be $r$-partite. Our results settle questions of Alon and of Glebov--Sudakov--Szab\'{o}.
\end{abstract}

\section{Introduction}

Motivated by classical questions about transversals in Latin Squares such as the famous Ryser--Brualdi--Stein Conjecture, there has been a lot of work on finding rainbow matchings in properly edge-colored graphs (see e.g.\ \cite{chakraborti-loh, keevash-et-al} and the references therein). In a properly edge-colored graph every color class is a matching, so these questions amount to finding a rainbow matching among a collection of matchings of different colors. Similar questions have also been studied in the setting of hypergraphs, which will be the focus of this paper.

A matching in an $r$-uniform hypergraph is a collection of pairwise disjoint edges (and the size of the matching is the number of edges it consists of). Given matchings $M_1,\dots,M_N$ in some $r$-uniform hypergraph, where we think of each matching as colored in a different color, a \emph{rainbow matching} is a matching consisting of edges $e_1\in M_{i_1},\dots,e_\ell\in M_{i_\ell}$ with distinct indices $i_1,\dots,i_\ell\in \{1,\dots,N\}$ (in other words, a matching consisting of edges with distinct colors).

The following problem goes back to Aharoni and Berger \cite{aharoni-berger}.

\begin{problem}\label{problem-F}
Let $M_1,\dots,M_N$ be matchings of size $t$ in some $r$-uniform hypergraph. How large does $N$ need to be (in terms of $t$ and $r$), such that it is always possible to find a rainbow matching of size $t$? In other words, how large does $N$ need to be such that it is always possible to find distinct indices $i_1,\dots,i_t\in \{1,\dots,N\}$ and pairwise disjoint  edges $e_1\in M_{i_1},\dots,e_t\in M_{i_t}$?
\end{problem}

We remark that there may be edges belonging to more than one of the matchings $M_1,\dots,M_N$, in which case we can think of such edges having more than one color. To form a rainbow matching, one may choose which color to use these edges with.

Equivalently to Problem \ref{problem-F}, one may ask about the maximum possible number of matchings of size $t$ in some $r$-uniform hypergraph without a rainbow matching of size $t$. Denoting this maximum possible number by $F(r,t)$, the answer to Problem \ref{problem-F} is precisely $N=F(r,t)+1$.

It is also natural to ask about Problem \ref{problem-F} with the additional restriction that the underlying $r$-uniform hypergraph is $r$-partite. This was in fact the original version of the problem proposed by Aharoni and Berger in 
\cite{aharoni-berger}. Let $f(r,t)$ be the maximum possible number of matchings of size $t$ in some $r$-partite $r$-uniform hypergraph without a rainbow matching of size $t$. We clearly have the inequality $f(r,t)\le F(r,t)$.

In the case of uniformity $r=2$, i.e.\ in the case of graphs, Aharoni and Berger \cite{aharoni-berger} proved that $f(2,t)=2t-2$ (relying on previous ideas of Drisko \cite{drisko}). For $F(2,t)$ the best known bounds are $2t-2=f(2,t)\le F(2,t)\le 3t-4$, where the upper bound is due to Aharoni--Briggs--Kim--Kim \cite{abkk} (building upon work of Aharoni--Berger--Chudnovsky--Howard--Seymour \cite{abchs} showing a slightly weaker bound). In general, Glebov--Sudakov--Szab\'{o} \cite{glebov-sudakov-szabo} conjectured that $f(r,t)$ is upper-bounded by a linear function of $t$ for any fixed $r\ge 2$ (or stated more formally, that for any fixed $r\ge 2$ there is a constant $c(r)$ such that $f(r,t)\le c(r)\cdot t$ holds for all $t$). Alon \cite{alon-rainbow} had also already asked in 2011 whether this is true, based on an intriguing connection with the Erd\H{o}s--Ginzburg--Ziv problem \cite{egz} from additive combinatorics. For the fractional version of Problem \ref{problem-F} (where $M_1,\dots,M_N$ are fractional matchings and one is looking for a rainbow fractional matching), such a bound was recently proved by Aharoni, Holzman and Jiang \cite{aharoni-holzman-jiang}, using tools from topology. 

Nevertheless, it turns out that the conjecture of Glebov--Sudakov--Szab\'{o} \cite{glebov-sudakov-szabo} is actually false: In this paper, we show that for any fixed uniformity $r\ge 3$, the functions $f(r,t)$ and $F(r,t)$ are in fact on the order of $t^r$ (up to constant factors depending on $r$). 
\begin{theorem}\label{thm-fixed-r}
For any fixed uniformity $r\geq 3$, there exist positive constants $c_r$ and $C_r$ such that 
\[c_rt^r\le f(r,t)\le F(r,t)\le C_rt^r\]
holds for all $t\ge 2$.
\end{theorem}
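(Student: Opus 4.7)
The inequality $f(r,t)\le F(r,t)$ holds by definition (every $r$-partite $r$-uniform hypergraph is $r$-uniform), so the content of the theorem is the two outer inequalities. For the lower bound $f(r,t)\ge c_r t^r$ I would build on Alon's Erd\H{o}s--Ginzburg--Ziv construction. In the graph case ($r=2$) he takes the bipartite graph $\mathbb{Z}_t\times\mathbb{Z}_t$ with matchings $M_a=\{(j,j+a):j\in\mathbb{Z}_t\}$; a rainbow matching of size $t$ forces a length-$t$ zero-sum subsequence among the shifts, whence EGZ gives $f(2,t)\ge 2t-2$. A direct generalization to $r$-partite hypergraphs on parts $V_1=\cdots=V_r=\mathbb{Z}_t$ with shift vectors in $\mathbb{Z}_t^{r-1}$ only yields $\Theta_r(t)$ matchings, because the relevant EGZ/Davenport constant for $\mathbb{Z}_t^{r-1}$ is linear in $t$. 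To reach $\Omega_r(t^r)$ I would enlarge each part to size $\Theta(t)$ (say $\mathbb{Z}_{2t}$) and parameterize matchings by a pair consisting of a shift vector and a $t$-subset specifying which $t$ of the $\sim 2t$ ``rows'' the matching uses. Drawing these from a designed zero-sum-free family --- essentially a product of an EGZ-avoiding shift family with an extra combinatorial degree of freedom --- should yield $\Omega_r(t^r)$ matchings admitting no rainbow matching of size $t$, since any such rainbow matching would require both a zero-sum condition on the shifts and a full covering of the chosen row-support.

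For the upper bound $F(r,t)\le C_r t^r$ I would argue by contradiction and iterated swaps. Suppose $N>C_r t^r$ matchings of size $t$ admit no rainbow matching of size $t$, and let $R$ be a maximum rainbow matching of size $k<t$ with vertex set $S$, $|S|=rk\le r(t-1)$. Maximality forces every matching $M_j$ not in $R$ to have all $t$ of its edges meet $S$, so double counting the $(N-k)t$ incidences shows some vertex $v\in S$ is contained in edges from at least $N/r$ of these matchings. Letting $e_\ell\in R$ be the (unique) edge through $v$, for each such $M_j$ we may swap $e_\ell$ out in favor of $M_j$'s edge through $v$, producing a new maximum rainbow matching of the same size. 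I would then set up an induction on $t$ by pushing the problem to the subhypergraph obtained by deleting $v$ (or by restricting attention to matchings whose $v$-edge is compatible with extending via the other $r-1$ vertices of $e_\ell$), where one now seeks a rainbow matching of size $t-1$ in a family of $\Omega(N/r)$ matchings. Unrolled carefully, the induction gives $N\le C_r t^r$.

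The main obstacle is making the upper bound's recursion quantitatively tight. A naive one-swap-per-iteration only buys one coordinate per step and loses a factor of at least $r$ in the count, giving only a super-polynomial or $r^t$-type bound rather than the claimed $t^r$. Achieving the sharp exponent $r$ will likely require a batched swap --- simultaneously exchanging several edges of $R$ via an auxiliary rainbow matching inside the subfamily --- or a well-chosen amortized charging that distributes the loss evenly across the $r$ coordinates. The lower bound is conceptually cleaner but still requires verifying that the enlarged parameterization does not accidentally admit a rainbow matching; mapping back to $\mathbb{Z}_t^{r-1}$ via a homomorphism reduces this to a zero-sum property in the smaller group, after which the count of fibers provides the extra factor of $t$ needed to reach $t^r$.
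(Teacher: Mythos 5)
Both halves of your proposal diverge from the paper and both contain genuine gaps rather than working arguments.

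For the lower bound, you anchor on Alon's EGZ reduction and then try to ``enlarge'' it, but you don't actually specify the construction; you only assert that a product of an EGZ-avoiding shift family with an ``extra combinatorial degree of freedom'' \emph{should} give $\Omega_r(t^r)$. The obstacle is structural, not a matter of detail: the EGZ constant $\mathfrak{s}(\mathbb{Z}_t^{r-1})$ is linear in $t$ for fixed $r$ (Alon--Dubiner), and Alon's reduction only transfers a family of zero-sum-free sequences into a family of matchings, so a homomorphism back into $\mathbb{Z}_t^{r-1}$ cannot by itself manufacture $t^{r-1}$ extra fibers while preserving no-rainbow; the fiber count is exactly what EGZ caps. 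The paper sidesteps additive combinatorics entirely: it takes the complete $r$-partite hypergraph on $tr$ vertices, labels $t-r$ vertices per part by $\{1,\dots,t-r\}$ and $r$ by special symbols $a_1,\dots,a_r$, fixes a partition $\{1,\dots,t-r\}=X_1\cup\cdots\cup X_r$, and for each $(x_1,\dots,x_r)\in X_1\times\cdots\times X_r$ builds a perfect matching from a constant edge, $r$ ``singleton-in-position-$j$'' edges, $r-1$ cyclic shifts $\sigma^j(x_1,\dots,x_r)$, and $t-2r$ constant tuples. The no-rainbow argument then hinges on a counting of how the $a_j$ and the $x_j$ symbols can be placed, forcing two rule-(iii) edges from the same $M_{x_1,\dots,x_r}$. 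This yields $(\lfloor t/r\rfloor-1)^r$ matchings with an entirely deterministic verification; nothing like it appears in your sketch.

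For the upper bound, you accurately diagnose your own difficulty: a one-vertex-at-a-time swap loses a factor per step and cannot reach the exponent $r$. But you don't resolve it, and in fact the greedy/augmenting approach is exactly what the earlier literature (Glebov--Sudakov--Szab\'o) used to get exponential bounds. The paper's route is different in kind: it iteratively peels the family $\mathcal{F}=M_1\cup\cdots\cup M_N$ into pieces $\mathcal{F}_k(S_k)$ with ``spread'' cores $S_k$ of size $\le r-1$, in the sense of the sunflower/Kahn--Kalai literature, so that any edge deleted early sits inside a large well-spread link. A charging (``money'') argument then exhibits a matching $M_j$ whose $\mathcal{F}_\ell$-edges have distinct colors available (Hall's theorem), and the spread condition gives enough disjoint replacements for the remaining edges of $M_j$ so that one can complete to a rainbow matching. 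The quantitative gain you were missing comes precisely from the spreadness inequality $|\mathcal{F}_k(S_k\cup\{v\})|<|\mathcal{F}_k|/(tr+t)^{|S_k|+1}$, which bounds the damage caused by the forbidden $\le tr$ vertices all at once, rather than one vertex per recursion. Your proposal correctly names the gap but has no mechanism to close it.
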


Note that for fixed uniformity $r\ge 3$, this theorem determines $f(r,t)$ and $F(r,t)$ up to constant factors. In other words, Theorem \ref{thm-fixed-r} essentially (up to constant factors) solves Problem \ref{problem-F} (as well as its $r$-partite analogue) in the setting of fixed uniformity $r\ge 3$. 

Theorem \ref{thm-fixed-r} shows in particular that the behavior of $f(r,t)$ is very different from the behavior of Erd\H{o}s--Ginzburg--Ziv constants. Indeed, Alon--Dubiner \cite{alon-dubiner} proved that the  Erd\H{o}s--Ginzburg--Ziv constant $\mathfrak{s}(\mathbb{Z}_{t}^{r})$ is linear in $t$ for any fixed $r$ (and the linearity factor was recently improved by the third author \cite{dima-egz}). Alon \cite{alon-rainbow} established that $\mathfrak{s}(\mathbb{Z}_{t}^{r-1})\le f(r,t)$, which in part lead to the conjecture that the function $f(r,t)$ as the combinatorial counterpart to the Erd\H{o}s--Ginzburg--Ziv constant should also be linear in $t$ for fixed uniformity $r$.

The best previous lower bounds for both $f(r,t)$ and $F(r,t)$ were linear in $t$. The best previous upper bound for both $f(r,t)$ and $F(r,t)$ was $(t-1)\binom{tr}{r}$ due to Munh\'{a} Correia, Sudakov, and Tomon \cite{correia-sudakov-tomon}, stated in the following theorem. This upper bound is on the order of $t^{r+1}$ for fixed $r$, so Theorem \ref{thm-fixed-r} improves both the known lower and upper bounds in this regime.

\begin{theorem}[{\cite{correia-sudakov-tomon}}]
\label{thm-correia-sudakov-tomon}
For any $t\ge 2$ and $r\ge 2$, we have 
$$f(r,t)\le F(r,t)\le (t-1)\binom{tr}{r}.$$
\end{theorem}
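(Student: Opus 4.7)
The natural plan is to build the rainbow matching greedily, adding one edge at a time while discarding ``blocked'' matchings. After $k$ successful steps, we have a partial rainbow matching $R_k$ of size $k$ with vertex set $V_k := V(R_k)$ of size $kr$, using matchings $M_{j_1},\dots,M_{j_k}$. At step $k+1$ we search among the unused matchings for one containing an edge disjoint from $V_k$; an unused matching all of whose edges meet $V_k$ is called \emph{blocked by $V_k$}. Say a matching is \emph{newly blocked at step $i$} if it is blocked by $V_i$ but not by $V_{i-1}$.

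The core claim is a per-step bound: for each $i\in\{1,\dots,t-1\}$, the number of newly blocked matchings is at most $\binom{tr}{r}-1$. Granting this, after $t-1$ successful extensions the total number of blocked matchings is at most $(t-1)(\binom{tr}{r}-1)$, so the number of unused, unblocked matchings available at step $t$ is at least
\[ N - (t-1) - (t-1)\bigl(\textstyle\binom{tr}{r}-1\bigr) \;=\; N - (t-1)\binom{tr}{r} \;\ge\; 1, \]
by the hypothesis $N\ge(t-1)\binom{tr}{r}+1$, so the greedy procedure succeeds in producing a rainbow matching of size $t$.

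The main obstacle is the per-step claim itself. To each newly blocked matching $M_j$ one would assign a \emph{witness} $g_j\in M_j$, namely an edge of $M_j$ that was disjoint from $V_{i-1}$ but meets $e_i$ (such an edge must exist by the definition of newly blocked). Since $g_j\subseteq V(M_j)$ and $|V(M_j)|=tr$, inside each single $V(M_j)$ there are at most $\binom{tr}{r}$ candidate witnesses. The difficulty is that the vertex sets $V(M_j)$ vary across $j$, so the witnesses of different newly blocked matchings do not automatically inject into a common universe of size $\binom{tr}{r}$. I expect the argument to resolve this either by restricting witnesses to a canonical $tr$-vertex reference set -- for instance $V(M_{j_i})$, the vertex set of the matching providing $e_i$, possibly after performing swaps to bring blocked matchings into a compatible form -- or by a collision argument: if two newly blocked matchings share the same witness edge, then one can build a rainbow matching of size $t$ directly from their combined edges together with $R_{i-1}$, using the fact that several matchings containing a common edge immediately contribute disjoint edges of distinct colors. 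Making this charging scheme precise is the crux of the proof.
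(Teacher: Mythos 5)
Your reduction to the per-step claim is where the proof lives, and that claim is not only unproven in your write-up --- as stated, it is false. Take $r=t=2$, so $\binom{tr}{r}-1=5$. Suppose the first chosen edge is $e_1=\{u,v\}$, and consider matchings $M^{(s)}=\{\{u,a_s\},\{v,b_s\}\}$ for $s=1,\dots,K$, where the $a_s,b_s$ are $2K$ distinct vertices outside $\{u,v\}$. Every edge of every $M^{(s)}$ meets $e_1$, so all $K$ of these matchings are newly blocked at step $1$, and $K$ can be arbitrarily large. (The theorem is not contradicted, because this configuration happens to contain a rainbow matching, e.g.\ $\{u,a_1\}$ and $\{v,b_2\}$ --- but your greedy procedure discards these matchings rather than exploiting them.) This example also shows why the charging scheme cannot be repaired along the lines you sketch: a single vertex of $e_i$ may lie in edges of arbitrarily many matchings, the witness edges can be pairwise distinct while meeting only in that vertex, and they live in no common $tr$-vertex universe (in particular they need not be subsets of $V(M_{j_i})$). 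The collision argument does not rescue the count either, since the problematic case is precisely the one with no collisions. Any correct version of the per-step statement would have to read ``either few matchings are newly blocked, or a rainbow matching can be extracted from the blocked ones,'' and proving that is essentially the whole theorem.

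The deeper point is that $\binom{tr}{r}$ enters the true bound not as the number of $r$-subsets of a $tr$-vertex set but as the dimension of $\bigwedge^{r}\mathbb{F}^{tr}$. The proof in the paper (following Correia--Sudakov--Tomon) encodes each edge $\{v_1,\dots,v_r\}$ as a wedge product $z_{v_1}\wedge\dots\wedge z_{v_r}$ of generic vectors in $\mathbb{F}^{tr}$, so that $t$ edges form a matching if and only if the wedge of the corresponding vectors is nonzero, and then applies a rank argument (Theorem \ref{thm-linear-algebra}, proved via the max-flattening rank of a semi-diagonal tensor, or alternatively by induction on $\dim V$) to show that more than $(t-1)\dim V$ tuples force a rainbow nonvanishing wedge. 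If you want a purely combinatorial greedy-flavored argument, the paper's Theorem \ref{thm-fixed-r-with-bound} shows how to make one work using spread families, but it only achieves the weaker bound $(tr+t)^r$; no elementary counting proof of $(t-1)\binom{tr}{r}$ is known.
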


This upper bound due to Munh\'{a} Correia, Sudakov, and Tomon \cite{correia-sudakov-tomon} improved upon previous upper bounds of Alon \cite{alon-rainbow} and Glebov--Sudakov--Szab\'{o} \cite{glebov-sudakov-szabo}. Their proof of $F(r,t)\le (t-1)\binom{tr}{r}$ is of a linear algebraic nature, using exterior power algebras. For such an approach $(t-1)\binom{tr}{r}$ is an important barrier, because this bound is tight for the natural linear-algebraic abstraction of the problem (see the discussion in Section \ref{subsect-linear-algebra}).

Our proof of the upper bound in Theorem \ref{thm-fixed-r} is purely combinatorial, and is motivated by arguments that first appeared in the context of the famous sunflower problem \cite{alweiss-lovett-wu-zhang,lovett-solomon-zhang} and were later used in the resolution of the fractional Kahn--Kalai conjecture by Frankston--Kahn--Narayanan--Park \cite{frankston-kahn-narayanan-park}. For the lower bound in Theorem \ref{thm-fixed-r} we give an explicit construction.

It is also interesting to study the functions $f(r,t)$ and $F(r,t)$ in the opposite regime, where $t\ge 2$ is fixed and $r$ is large. For example, Alon \cite{alon-rainbow} explicitly asked about studying the function $f(r,t)$ in the case of $t=3$ and large $r$, i.e.\ about understanding the growth behavior of $f(r,3)$ as a function of $r$. Furthermore, for $t=2$, Problem \ref{problem-F} is closely related to Bollob\'as' celebrated ``Two Families Theorem''. So studying this problem for fixed $t\geq 3$, can be viewed as a natural extension of Bollob\'as'  ``Two Families Theorem'' to a ``$t$ Families Theorem''. 

For fixed $t\ge 2$ and large $r$, the upper bound $(t-1)\binom{tr}{r}$ for $F(r,t)$ due to Munh\'{a} Correia, Sudakov, and Tomon is (up to constant factors depending on $t$) on the order of $(t^t/(t-1)^{t-1})^r/\sqrt{r}$, i.e.\ it is exponential in $r$ with base $t^t/(t-1)^{t-1}$ (which, for relatively large $t$, is roughly $et$). We prove that this upper bound is tight up to sub-exponential factors in $r$.

\begin{theorem}\label{thm-fixed-t-F}
For any fixed $t\ge 2$, and any large $r$, we have
\[F(r,t)\ge \left(\frac{t^t}{(t-1)^{t-1}}\right)^{r-O(\sqrt{r})}.\]
\end{theorem}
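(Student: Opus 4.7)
The strategy is to construct explicitly a family of matchings of size $t$ of the claimed size. The $t=2$ case is illustrative: on $V=[2r]$, the family of all unordered matchings $\{A, [2r]\setminus A\}$ (indexed by $r$-subsets $A \subseteq [2r]$) has $\binom{2r}{r}/2 = 4^{r - O(\log r)}$ members and admits no rainbow matching of size $2$, since the only $r$-subset of $[2r]$ disjoint from $A$ is $[2r]\setminus A$ itself, which belongs to the same matching. This already matches the claimed bound for $t=2$.

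For general $t$, I would build the family recursively in $r$. Given $\mathcal F_r$ on $V_r=[tr]$ with $N_r$ matchings of size $t$ and no rainbow of size $t$, construct $\mathcal F_{r+1}$ on $V_{r+1}=V_r \sqcup W$ with $|W|=t$, consisting of: (a) \emph{extensions} of each $M \in \mathcal F_r$ obtained by assigning one vertex of $W$ to each edge of $M$ via a bijection $\pi \colon [t] \to W$; and (b) \emph{new} matchings in which two or more vertices of $W$ sit in the same edge, parametrized by compositions $t_1 + \cdots + t_t = t$ with some $t_j \ge 2$. The target growth factor per step matches the binomial ratio $\binom{(r+1)t}{r+1}/\binom{rt}{r} \to t^t/(t-1)^{t-1}$. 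For $t=2$, a direct case analysis (splitting by how the chosen edges meet $W = \{u_1, u_2\}$) shows $|\mathcal F_{r+1}| = 2 N_r + \binom{2r}{r-1} = \binom{2r+2}{r+1}/2$ and that no rainbow of size $2$ is created.

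The main obstacle is the generalization to $t \ge 3$: extensions $M_{\pi_1}, \ldots, M_{\pi_t}$ of the same old matching $M$ by $t$ distinct bijections yield a rainbow precisely when the $t \times t$ matrix $(\pi_s(j))_{s, j}$ admits a Latin transversal, which always happens among any $t$ distinct bijections of $[t]$ for $t \ge 3$. To circumvent this, I would restrict to at most $t-1$ bijections per old matching, thereby ruling out rainbows ``internally'' to the extensions of a single $M$; the rainbow property across distinct old matchings is inherited from $\mathcal F_r$. The compensatory contribution from type (b) matchings, enumerated by $r$-subsets of $V_r$ of various sizes, is then shown to be enough to restore per-step growth $\approx t^t / (t-1)^{t-1}$. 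Iterating the recursion $O(r)$ times from a trivial base case then yields $(t^t/(t-1)^{t-1})^{r - O(\sqrt r)}$ matchings; the $O(\sqrt r)$ slack in the exponent absorbs the polynomial factors and the per-step multiplicative losses needed to handle the Latin-transversal obstruction.
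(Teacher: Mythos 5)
Your base case for $t=2$ is exactly the paper's, but the recursive strategy for $t\ge 3$ is a genuinely different approach from the paper's --- and, unfortunately, one with serious gaps that I don't see how to close.

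\textbf{A factual error in the key claim.} You assert that any $t\ge 3$ distinct bijections $\pi_1,\dots,\pi_t\colon[t]\to W$ always admit a ``Latin transversal'' (a permutation $\sigma$ with $\pi_1(\sigma(1)),\dots,\pi_t(\sigma(t))$ all distinct). This is false already for $t=4$: take $\pi_1,\dots,\pi_4$ to be the four elements of the Klein four-group $\{e,(12),(34),(12)(34)\}$ acting on $\{1,2,3,4\}$. The resulting $4\times 4$ array has the block structure in which columns $\{1,2\}$ contain only symbols $\{1,2\}$ and columns $\{3,4\}$ only $\{3,4\}$, and a short case analysis shows no transversal exists (this is the same phenomenon as $\mathbb{Z}_2^2$ having no Latin transversal). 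Whether the claim happens to hold for $t=3$ is beside the point: the motivation for your fix of restricting to $t-1$ bijections is based on a false premise, and the correct criterion for when a collection of extensions of a single $M$ creates a rainbow is more delicate.

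\textbf{The larger gaps.} Even setting the transversal issue aside, the proposal does not come close to a proof. The ``type (b)'' matchings are never actually constructed; no argument is given that they avoid rainbows among themselves, with each other's restrictions to $V_r$, or in combination with type (a) extensions; and the counting needed to recover the per-step growth factor $t^t/(t-1)^{t-1}$ is absent. The claim that ``the rainbow property across distinct old matchings is inherited from $\mathcal F_r$'' is also only automatic when all $t$ rainbow edges are extensions of pairwise distinct old matchings --- when two or more are extensions of the same $M$, the restriction to $V_r$ gives fewer than $t$ colors and no contradiction follows. Finally, note that limiting each $M\in\mathcal F_r$ to at most $t-1$ extensions yields at most $(t-1)N_r$ type (a) matchings, so the per-step shortfall is a constant factor $>1$. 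A constant factor lost in every step compounds to a loss exponential in $r$, which the $O(\sqrt r)$ slack in the exponent cannot absorb. You would need to recover essentially \emph{all} of the target growth from the type (b) matchings, which have no controlling structure in your sketch.

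\textbf{What the paper actually does.} The paper proves this theorem by reduction to the multicolored sum-free construction of Kleinberg--Sawin--Speyer and Lov\'asz--Sauermann (Proposition~\ref{prop-t-color-sum-free}): one obtains $t$-tuples of $0/1$ vectors, each with exactly $n/t$ ones, summing to $\mathbbm{1}^n$ only on the diagonal, and translates each such $t$-tuple into a perfect matching of size $t$ on $t+n$ vertices by attaching distinguished apex vertices $a_1,\dots,a_t$ to the supports. Disjointness of chosen edges then forces the relevant vectors to sum to $\mathbbm{1}^n$, which forces all indices to coincide. This is a short, clean black-box argument whose $O(\sqrt r)$ loss comes from the Behrend-type construction inside the sum-free theorem. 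That construction is fundamentally probabilistic/density-based; an explicit recursive construction matching it would be a substantial new result in its own right, which is further evidence that your approach as sketched cannot be completed by elementary counting.
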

Here, the implicit constant in the $O$-notation may depend on $t$.

For $f(r,t)$, i.e.\ in the setting of $r$-partite graphs, an easy adaptation of the proof of Munh\'{a} Correia, Sudakov, and Tomon gives the upper bound $f(r,t)\le (t-1)\cdot t^r$. Again, we prove that for fixed $t\ge 2$ and large $r$, this is tight up to sub-exponential factors in $r$.

\begin{theorem}\label{thm-fixed-t-f}
For any fixed $t\ge 2$, and any $r\ge 2$, we have
\[t^{r-O(\sqrt{r})}\le f(r,t)\le (t-1)\cdot t^r.\]
\end{theorem}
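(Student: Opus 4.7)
For the upper bound $f(r,t) \le (t-1) t^r$, the plan is to adapt the linear-algebraic argument of Correia--Sudakov--Tomon (Theorem \ref{thm-correia-sudakov-tomon}) to the $r$-partite setting, replacing their exterior algebra with a tensor algebra. Concretely, fix a sufficiently large field $\F$, and for each part $V_i$ of the $r$-partite hypergraph choose a generic linear embedding $\phi_i : V_i \to \F^{t}$ into a $t$-dimensional space. To each edge $e = (v_1, \dots, v_r)$ with $v_i \in V_i$, assign the pure tensor $\phi_1(v_1) \otimes \cdots \otimes \phi_r(v_r) \in \F^t \otimes \cdots \otimes \F^t$, a space of dimension $t^r$; the $r$-partite setup is what allows us to use this $t^r$-dimensional tensor space in place of the larger $\binom{tr}{r}$-dimensional $\Lambda^r \F^{tr}$ used by CST in the general setting. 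Following their structure, each size-$t$ matching $M$ then produces a $(t-1)$-dimensional subspace, coming from zero-sum linear combinations of the tensors of its $t$ edges. If $N > (t-1) t^r$, a pigeonhole-style count forces a nontrivial linear dependence across these subspaces from distinct matchings, and the genericity of the embeddings converts this dependence into a rainbow matching of size $t$.

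For the lower bound $f(r,t) \ge t^{r - O(\sqrt{r})}$, the task is an explicit construction. For $t = 2$, Alon's classical construction of shifted diagonals over $\Z_2^{r-1}$ already gives about $2^{r-1}$ matchings of size $2$ with no rainbow matching, which meets the target. For $t \ge 3$, Alon's construction yields only $\mathfrak{s}(\Z_t^{r-1}) \le O(c_t^r)$ matchings for some $c_t < t$, since the EGZ constant $\mathfrak{s}(\Z_t^n)$ for $t \ge 3$ is bounded by cap-set-type inequalities; a more sophisticated construction is therefore required. I would split the $r$ parts into $r - O(\sqrt{r})$ ``main'' parts and $O(\sqrt{r})$ ``auxiliary'' parts, use Alon's construction (or a similar diagonal-based family) on the main parts, and amplify over the auxiliary parts by a product-type operation that multiplies the matching count by $t^{O(\sqrt{r})}$ while maintaining the no-rainbow-of-size-$t$ property. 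The $O(\sqrt{r})$ loss in the exponent reflects the balance between the two components.

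The upper bound should follow routinely from the tensor-product adaptation of CST. The main obstacle lies in the lower bound, and specifically in the amplification step: the natural product of two constructions with no rainbow matchings of sizes $s_1$ and $s_2$ only forbids rainbows of size exceeding $(s_1-1)(s_2-1)$ in the product, so a naive iteration would require a base case with no rainbow of size $2$, which is too restrictive to support many matchings. Overcoming this will demand either a more structured product operation (perhaps one that exploits the tensor structure of the $r$-partite setting directly) or a carefully designed algebraic family --- pinpointing the right construction is the step I expect to be the hardest.
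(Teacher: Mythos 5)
Your upper bound is essentially the paper's: the whole point of the $r$-partite setting is that one can replace the $\binom{tr}{r}$-dimensional exterior power by the $t^r$-dimensional space $W_1\otimes\cdots\otimes W_r$ (the paper realizes this as the subspace of $\bigwedge^r(W_1\oplus\cdots\oplus W_r)$ spanned by partite pure wedges, which is the same space), and then apply the general linear-algebraic theorem of Correia--Sudakov--Tomon to a multilinear form $\varphi$ that vanishes exactly on non-matchings. Your tensor formulation works, with $\varphi$ taken to be the product of the $r$ determinants $\det(\phi_i(v_{1,i}),\dots,\phi_i(v_{t,i}))$, which descends to a multilinear form on the tensor product. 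One caveat: the conclusion from $N>(t-1)\dim V$ is not a ``pigeonhole-style count'' on $(t-1)$-dimensional subspaces attached to the matchings; it requires either the flattening-rank argument for semi-diagonal tensors or an inductive dimension-reduction argument. Still, given \cite{correia-sudakov-tomon} this adaptation is routine, as the paper itself notes.

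The genuine gap is the lower bound $f(r,t)\ge t^{r-O(\sqrt r)}$ for $t\ge 3$, which is the substantive content of the theorem, and you have correctly diagnosed that you do not have it. The route you sketch --- an EGZ/diagonal family on most parts plus an ``amplification'' over $O(\sqrt r)$ auxiliary parts --- runs into exactly the obstruction you name: products of rainbow-free families do not remain rainbow-free at the right matching size, and no explicit construction of this kind is known to reach exponential base $t$ (indeed all previously known bounds had base at most $e$). The paper's construction is probabilistic, following the Kleinberg--Sawin--Speyer / Lov\'asz--Sauermann multicolored sum-free framework rather than any product operation. One takes a Behrend-type family of $R\ge P^{1-o(1)}$ tuples $(y_{1,h},\dots,y_{t,h})$ in $\F_P$ with $y_{1,h_1}+\cdots+y_{t,h_t}=0$ if and only if $h_1=\cdots=h_t$, a uniformly random linear map $f:\F_P^{tr}\to\F_P$ with $f(\mathbbm{1}^{tr})=0$, and keeps those partite perfect matchings $(z_1,\dots,z_t)$ (indicator vectors summing to $\mathbbm{1}^{tr}$) for which $(f(z_1),\dots,f(z_t))$ equals some $(y_{1,h},\dots,y_{t,h})$ and which are moreover \emph{isolated}, i.e.\ share no edge with another selected matching; isolation yields the stronger property that any $t$ pairwise disjoint edges from the selected matchings come from a single matching. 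The technical core --- the part hardest to reinvent --- is showing that a $\Theta(R/P^{t-1})$ fraction of the $t!^{r-1}$ candidate matchings survive in expectation; this requires counting, for each dimension $d$, the overlapping candidate pairs with $\dim\spn_{\F_P}(z_1,\dots,z_t,z_1',\dots,z_t')=d$ via the component structure of an auxiliary bipartite intersection graph, and tuning $P\approx(t-1)!^{(r-1)/(t-2)}$ so the union bound closes. The $O(\sqrt r)$ loss comes from Behrend's construction, not from sacrificing auxiliary coordinates. In short: the upper bound is essentially right, the lower bound is missing, and the amplification strategy you propose is not the mechanism that works.
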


Again, the implicit constant in the $O$-notation may depend on $t$.

Theorem \ref{thm-fixed-t-f} in particular answers a question of Glebov--Sudakov--Szab\'{o} \cite{glebov-sudakov-szabo}, asking whether $f(r,t)$ can be upper-bounded by a function of the form $\alpha_t\beta^r$ (where $\alpha_t$ may depend on $t$, but $\beta$ is an absolute constant). In fact, the best previous lower bounds for both $f(r,t)$ and $F(r,t)$ were of this form with $\beta=e\approx 2.71$ (to be precise, the best previous lower bound, due to Alon \cite{alon-rainbow}, was $(e-o(1))^{r-1}$ where the $o(1)$-term converges to zero as $t$ grows). However, Theorem \ref{thm-fixed-t-f} shows that one cannot get such an upper bound with an absolute constant $\beta$.

The lower bounds in Theorems \ref{thm-fixed-t-F} and \ref{thm-fixed-t-f} can be proved via a probabilistic argument with some ingredients from additive combinatorics. This argument is similar to the proofs of the lower bounds for the so-called multi-colored sum-free theorem in \cite{kleinberg-sawin-speyer} and \cite{lovasz-sauermann}. In fact, to prove Theorem \ref{thm-fixed-t-F}, one can use \cite[Proposition 3.1]{lovasz-sauermann} as a black-box, but for the setting of Theorem \ref{thm-fixed-t-f} one needs to adapt the strategy and use different arguments to control linear dependencies that affect the relevant probabilistic bounds.

Note that Theorems \ref{thm-fixed-t-F} and \ref{thm-fixed-t-f} (together with the upper bound $F(r,t)\le (t-1)\binom{tr}{r}$ due to Munh\'{a} Correia, Sudakov, and Tomon) determine the functions $f(r,t)$ and $F(r,t)$ for fixed $t\ge 2$ up to lower-order terms (i.e.\ up to sub-exponential terms in $r$). 

\textit{Organization.} We first prove Theorem \ref{thm-fixed-r} about fixed uniformity $r$, showing the upper bound in Section \ref{sect-upper-bound-fixed-r} and the lower lower bound in Section \ref{sect-lower-bound-fixed-r}.  Afterwards, we prove Theorems \ref{thm-fixed-t-F} and \ref{thm-fixed-t-f} about fixed matching size $t$ in Section \ref{sect-fixed-t}.

\textit{Acknowledgements.} The authors would like to thank the anonymous referee for their careful reading of the paper and many helpful suggestions. The majority of this work was completed while the first author was affialated with the Institute for Advanced Study and supported by NSF Award DMS-1926686, and while the second author was affiliated with the Massachusetts Institute of Technology and supported by NSF Award DMS-2100157 and a Sloan Research Fellowship.

\section{Upper bound for fixed uniformity}
\label{sect-upper-bound-fixed-r}

In this section, we prove the upper bound in Theorem \ref{thm-fixed-r}. More precisely, we prove the following:

\begin{theorem}\label{thm-fixed-r-with-bound}
Let $r\geq 2$ and $t\ge 2$ be integers. Let $M_1,\dots,M_N$ be matchings of size $t$ in some $r$-uniform hypergraph, with $N\ge (tr+t)^r$. Then there exists a rainbow matching of size $t$ (in other words, there exist distinct indices $i_1,\dots,i_t\in \{1,\dots,N\}$ and pairwise disjoint edges $e_1\in M_{i_1},\dots,e_t\in M_{i_t}$).
\end{theorem}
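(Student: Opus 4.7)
The plan is to prove Theorem~\ref{thm-fixed-r-with-bound} by induction on $t$, with the base case $t=1$ being immediate (any single matching supplies an edge). For the inductive step, assume the bound for $t-1$ and take matchings $M_1,\dots,M_N$ of size $t$ with $N\geq (tr+t)^r$. For each vertex $v$, set $d(v)=|\{i:v\in V(M_i)\}|$, and split into two cases according to whether $\max_v d(v)$ exceeds a threshold $D$ to be tuned.

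In the \emph{high-degree} case, fix a vertex $v$ with $d(v)\geq D$ and let $I=\{i:v\in V(M_i)\}$. For each $i\in I$, removing the unique edge $e_i^v\in M_i$ containing $v$ gives a matching $M_i'$ of size $t-1$ in $H\setminus\{v\}$. Provided $|I|\geq ((t-1)(r+1))^r$, the inductive hypothesis applied to $\{M_i'\}_{i\in I}$ yields a rainbow matching $R$ of size $t-1$ on some indices $i_1,\dots,i_{t-1}\in I$. The delicate \emph{extension} step is then to locate $i\in I\setminus\{i_1,\dots,i_{t-1}\}$ such that $e_i^v$ is disjoint from $V(R)$; this likely requires choosing $R$ with foresight so that enough room is left near $v$, or an auxiliary counting argument on the $(r-1)$-edges $\{e_i^v\setminus\{v\}:i\in I\}$ living in the link of $v$.

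In the \emph{low-degree} case, $d(v)<D$ for every $v$, so the matchings are ``spread''. Following the sunflower / Kahn--Kalai philosophy signalled in the introduction, I would pick a uniform random $t$-subset $S\subseteq [N]$ and a uniform random edge $e_i\in M_i$ for each $i\in S$. Using $\sum_v d(v)=Ntr$ and $\sum_v d(v)^2\leq D\cdot Ntr$, a short calculation yields, for distinct $i,j$, the estimate $\PP[e_i\cap e_j\neq\emptyset]\leq Dr/(Nt)$, so a union bound over the $\binom{t}{2}$ pairs produces pairwise disjoint edges (hence a rainbow matching) with positive probability once $D\lesssim N/(tr)$.

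The main obstacle is balancing the two thresholds so that the constant $(tr+t)^r$ emerges cleanly: $D$ must be small enough for random sampling to succeed yet large enough that the high-degree reduction supplies at least $((t-1)(r+1))^r$ matchings for the inductive hypothesis. Comparing $(t-1)^r(r+1)^r$ with $N/(tr)\sim t^{r-1}(r+1)^r/r$ shows this naive dichotomy only barely works (and already fails for small $t$), so I anticipate that the actual proof either peels off several popular vertices iteratively in the style of \cite{alweiss-lovett-wu-zhang,frankston-kahn-narayanan-park}, or strengthens the inductive statement by allowing matchings of varying sizes and tracking a ``reserved'' set of vertices to be avoided, so that the extension step in the high-degree case is built into the induction rather than handled separately.
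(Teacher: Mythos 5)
Your proposal is a sketch with two genuine gaps, both of which you partly acknowledge, and either one is fatal on its own. The first is the extension step in the high-degree case. After applying the inductive hypothesis to the truncated matchings $\{M_i'\}_{i\in I}$ you obtain a rainbow matching $R$ of size $t-1$, but you then need an unused index $i$ whose edge $e_i^v$ through $v$ avoids the $(t-1)r$ vertices of $R$, and nothing forces this: all the edges $e_i^v$ for $i\in I$ could be one and the same edge $e$, and the inductive hypothesis gives you no control over whether $R$ meets the $r-1$ vertices of $e\setminus\{v\}$. Strengthening the induction to carry a reserved vertex set is exactly the kind of fix one would need, but it changes the quantitative statement and you have not carried it out. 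The second gap is that the two thresholds do not meet: the low-degree sampling argument needs $D\lesssim 2N/((t-1)r)$, while the high-degree reduction needs $D\ge ((t-1)(r+1))^r$, and with $N=(tr+t)^r=t^r(r+1)^r$ these are compatible only when $(t/(t-1))^r\gtrsim (t-1)r$, i.e.\ roughly when $r\gg t\log t$. For $r$ comparable to $t$ (say $r=t\ge 4$) the dichotomy fails outright, so this is not merely an issue with ``small $t$'' but with a large portion of the parameter range the theorem covers. Iterating the high-degree reduction, as you suggest, degrades the count further and does not obviously close this gap.

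For comparison, the paper's proof does not induct on $t$ and has no global degree dichotomy. It iteratively extracts maximal ``dense cores'' $S_k$ from the union $\mathcal{F}=M_1\cup\dots\cup M_N$, removing at each step all edges containing $S_k$, until a residual family $\mathcal{F}_\ell$ of size at most $(tr+t)^r\le N$ remains. A fractional counting (``one dollar per residual edge'') finds a matching $M_j$ whose residual edges can be rainbow-colored via Hall's theorem, and the non-residual edges of $M_j$ are then swapped out one by one. The point your approach is missing is supplied there by the \emph{maximality} of each $S_k$: it guarantees that at most a $\tfrac{r}{r+1}$-fraction of the edges of $\mathcal{F}_k(S_k)$ meet any prescribed set of $(t-1)r$ vertices, so a replacement edge disjoint from the current partial matching and carrying a fresh color always exists. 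That local spreadness is precisely what makes the extension step go through, and it has no analogue in your high-degree/low-degree split.
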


This theorem shows that $F(r,t)\le (tr+t)^r$ for $r\geq 2$ and $t\ge 2$, and therefore implies the upper bound in Theorem \ref{thm-fixed-r} with $C_r=(r+1)^r$.

\begin{proof}
Let $V$ be the vertex set of the underlying $r$-uniform hypergraph. Now, let $\mathcal{F}=M_1\cup \dots\cup M_N$ be the family of all edges appearing in at least one of the matchings $M_1,\dots,M_N$. Note that $\mathcal{F}$ is a family of subsets of $V$, each of size $r$.

For a subset $S\su V$ and sub-family $\mathcal{G}\su \mathcal{F}$, let $\mathcal{G}(S)=\{X\in \mathcal{G}\mid S\su X\}$ denote the family of all edges in $\mathcal{G}$ that contain $S$. Then $|\mathcal{G}(S)|$ is the number of edges in $\mathcal{G}$ containing $S$. Note that $\mathcal{G}(\emptyset)=\mathcal{G}$ and in particular $|\mathcal{G}(\emptyset)|=|\mathcal{G}|$. Furthermore note that $|\mathcal{G}(S)|=0$ whenever $|S|>r$ and $|\mathcal{G}(S)|\le 1$ whenever $|S|=r$.

Let us now define a sequence $\mathcal{F}=\mathcal{F}_1\supset \mathcal{F}_1\supset \dots \supset \mathcal{F}_\ell$ as follows. Define $\mathcal{F}_1=\mathcal{F}$. Now for some positive integer $k$, assume that we have already defined $\mathcal{F}_k$. If $|\mathcal{F}_k|\le (tr+t)^r$, we stop the process at this point (setting $\ell=k$). Otherwise, if $|\mathcal{F}_k|> (tr+t)^r$, let us consider a maximal subset $S_k\su V$ with the property that
\begin{equation}\label{eq-spread}
|\mathcal{F}_k(S_k)|\ge \frac{|\mathcal{F}_k|}{(tr+t)^{|S_k|}},
\end{equation}
and let us then define $\mathcal{F}_{k+1}=\mathcal{F}_{k}\setminus \mathcal{F}_k(S_k)$ (i.e.\ $\mathcal{F}_{k+1}$ is obtained from $\mathcal{F}_k$ by removing all edges containing $S_k$).

Let us remark that at every step of this process, there is indeed a subset $S_k\su V$ satisfying (\ref{eq-spread}), since $S_k=\emptyset$ always has this property (and so, among all subsets $S_k\su V$ satisfying (\ref{eq-spread}) we can choose a maximal one since $V$ is a finite set). 

Also note that by (\ref{eq-spread}) at every step $\mathcal{F}_k(S_k)$ is non-empty and so $\mathcal{F}_{k+1}=\mathcal{F}_{k}\setminus \mathcal{F}_k(S_k)$ is a proper subset of $\mathcal{F}_k$. In particular, since $\mathcal{F}_1=\mathcal{F}$ is a finite set, this means that the sequence $\mathcal{F}=\mathcal{F}_1\supset \mathcal{F}_1\supset \dots \supset \mathcal{F}_\ell$ needs to indeed terminate eventually (i.e.\ the above process must indeed stop at some point after some finite number of steps). By the definition of the process, we have $|\mathcal{F}_\ell|\le (tr+t)^r$ for the last family in the sequence (and we have $|\mathcal{F}_k|>(tr+t)^r$ for $k=1,\dots,\ell-1$).

Recall that whenever $|\mathcal{F}_k|\le (tr+t)^r$, the process terminates and we do not need to choose a subset $S_k\su V$ (and we do not define another family $\mathcal{F}_{k+1}$). We claim that whenever $S_k$ is defined (i.e.\ whenever $|\mathcal{F}_k|> (tr+t)^r$), we have $|S_k|\le r-1$. Indeed, if $|S_k|>r$, then $|\mathcal{F}_k(S_k)|=0$ while the right-hand side of (\ref{eq-spread}) is positive, so the inequality in (\ref{eq-spread}) cannot be satisfied. If $|S_k|=r$, then $|\mathcal{F}_k(S_k)|\le 1$ while  the right-hand side of (\ref{eq-spread}) is larger than $1$ (since $|\mathcal{F}_k|> (tr+t)^r$),  so again (\ref{eq-spread}) cannot be satisfied. This shows that indeed all the sets $S_k$ for $k=1,\dots,\ell-1$ occurring in this process satisfy $|S_k|\le r-1$.

Recall that $|\mathcal{F}_\ell|\le (tr+t)^r\le N$. Let us imagine that every edge in $\mathcal{F}_\ell$ is given one dollar (then the total amount of money is $|\mathcal{F}_\ell|\le N$ dollars). Every edge in $\mathcal{F}_\ell\su \mathcal{F}$ is contained in at least one of the matchings $M_1,\dots,M_N$. Now, imagine that every edge $e\in \mathcal{F}_\ell$ gives its one dollar in equal shares to all the indices $i\in \{1,\dots,N\}$ which satisfy $e\in M_i$ (if there are $z$ such indices, then $e$ gives each of them $1/z$ dollars). In other words, if we imagine the matchings $M_1,\dots,M_N$ as colors, each edge gives its one dollar in equal shares to all of its colors.

Since the total amount of money is at most $N$ dollars, there must be an index $j\in \{1,\dots,N\}$ receiving at most one dollar. Let us fix such an index $j$.

Recall that $M_j$ is a matching of size $t$. Let $m=\left|M_j\cap \mathcal{F}_\ell\right|$ be the number of edges of $M_j$ that are part of the family $\mathcal{F}_\ell$ (these are the edges contributing money to the index $j$ in the process described above), then $0\le m\le t$. Let us denote the $t$ edges in $M_j$ by $e^*_1,\dots,e^*_t$ in such an order that we have $e^*_1,\dots,e^*_m\in \mathcal{F}_\ell$ and $e^*_{m+1},\dots,e^*_t\not\in \mathcal{F}_\ell$.

Our goal is now to find a rainbow matching of size $t$ by adapting the matching $M_j$ in a suitable way. First, we use the following claim for the first $m$ edges $e^*_1,\dots,e^*_m$ of $M_j$.

\begin{claim}\label{claim-edges-from-F-ell}
There exists $m$ distinct indices $i_1,\dots,i_m\in \{1,\dots,N\}$ such that $e_1^*\in M_{i_1},\dots, e_m^*\in M_{i_m}$.
\end{claim}
\begin{proof}
Let us consider the bipartite graph with vertices labeled by $e_1^*,\dots, e_m^*$ on the left side and vertices labeled by $1,\dots,N$ on the right side, where we draw an edge between vertex $e_h^*$ on the left and vertex $i$ on the right (for $1\le h\le m$ and $1\le i\le N$) if and only if $e_h^*\in M_i$. Note that vertex $j$ on the right side is adjacent to all vertices on the left side (since $e^*_1,\dots,e^*_m\in M_j$ by definition). The claim asserts that this bipartite graph has a matching of size $m$ (i.e.\ a matching that covers all the vertices $e_1^*,\dots, e_m^*$ on the left side).

In order to obtain such a matching in this auxiliary bipartite graph, by Hall's marriage theorem it suffices to check that for any $1\le a\le m$, and any subset of $a$ vertices on the left side, the union of their neighborhoods on the right side has size at least $a$. So suppose there exist $a$ vertices on the left side, such that the union of their neighborhoods on the right side has size at most $a-1$.

Let us consider the $a$ dollars that these $a$ vertices on the left side obtained (recall that $e^*_1,\dots,e^*_m\in \mathcal{F}_\ell$, so every vertex on the left side of this bipartite graph obtained a dollar). Each of these vertices gives its dollar in equal shares to its neighbors $i\in \{1,\dots,N\}$ on the right side of this bipartite graph (since for every vertex $e_h^*$ on the left side its neighbors on the right side are precisely those indices $i\in \{1,\dots,N\}$ with $e_h^*\in M_i$). By our assumption, at most $a-1$ vertices are receiving some share of these $a$ dollars, so one vertex $j'\in \{1,\dots,N\}$ on the left side must receive a share of more than one dollar among these $a$ dollars. On the other hand, all vertices on the left side are adjacent to vertex $j$ on the right side, so each of the $a$ vertices we considered on the left side gives at least as much money to index $j$ as to index $j'$. Thus, $j$ must also receive a share of more than one dollar among the $a$ dollars we considered. However, by our choice of $j$, the total amount of money that $j$ receives is at most one dollar, so this is a contradiction.
\end{proof}

Claim \ref{claim-edges-from-F-ell} states that we can assign distinct colors to the first $m$ edges $e^*_1,\dots,e^*_m$ of $M_j$ (i.e.\ to the edges of $m_j$ that are contained in $\mathcal{F}_\ell$). We will now use the following claim to modify the remaining edges $e^*_{m+1},\dots,e^*_t$ in such a way that we obtain a rainbow matching.

\begin{claim}\label{claim-edges-outside-F-ell}
For each $h \in \{m,\ldots,t\}$, there exist distinct indices $i_1,\dots,i_h\in \{1,\dots,N\}$ and edges $e_1\in M_{i_1},\dots, e_h\in M_{i_h}$ such that the edges $e_1,\dots,e_h,e^*_{h+1},\dots,e^*_t$ are pairwise disjoint (in other words, the edges $e_1,\dots,e_h,e^*_{h+1},\dots,e^*_t$ form a matching of size $t$).
\end{claim}
Note that for $h=t$, this claim states precisely that there is a rainbow matching of size $t$ among our original matchings $M_1,\dots,M_t$. Thus, showing this claim concludes the proof of Theorem \ref{thm-fixed-r-with-bound}. 

\begin{proof}
We prove the claim by induction on $h$. For $h=m$, the statement follows from Claim \ref{claim-edges-from-F-ell}. Indeed, we can take distinct indices $i_1,\dots,i_m\in \{1,\dots,N\}$ as in Claim \ref{claim-edges-from-F-ell} and define $e_a=e^*_a$ for $a=1,\dots,m$. Then we have $e_1\in M_{i_1},\dots, e_m\in M_{i_m}$ and the edges $e_1,\dots,e_m,e^*_{m+1},\dots,e^*_t$ are pairwise disjoint (since they are precisely the edges $e^*_1,\dots,e^*_t$ of the matching $M_j$).

So let us now assume that $m+1\le h\le t$, and that we already proved Claim \ref{claim-edges-outside-F-ell} for $h-1$. This means that there exist distinct indices $i_1,\dots,i_{h-1}\in \{1,\dots,N\}$ and edges $e_1\in M_{i_1},\dots, e_{h-1}\in M_{i_{h-1}}$ such that the edges $e_1,\dots,e_{h-1},e^*_{h},\dots,e^*_t$ are pairwise disjoint.

Recall that $e^*_h\not\in \mathcal{F}_\ell$ (since $h\ge m+1$). As $e^*_h\in M_j\su \mathcal{F}=\mathcal{F}_1$, there must be an index $k\in \{1,\dots,\ell-1\}$ such that $e_h^*\in \mathcal{F}_k\setminus \mathcal{F}_{k+1}$. Recalling that we defined $\mathcal{F}_{k+1}=\mathcal{F}_k\setminus \mathcal{F}_k(S_k)$, this means that $e_h^*\in \mathcal{F}_k(S_k)$, so $S_k\su e_h^*$. In particular, this means that $S_k$ is disjoint from $e_1,\dots,e_{h-1}$ and $e^*_{h+1},\dots,e^*_t$.

Recall from (\ref{eq-spread}) that
\[|\mathcal{F}_k(S_k)|\ge \frac{|\mathcal{F}_k|}{(tr+t)^{|S_k|}}.\]
On the other hand, $S_k$ was chosen to be a maximal subset of $V$ satisfying (\ref{eq-spread}). Hence, for every vertex $v\in e_1\cup \dots\cup e_{h-1}\cup e^*_{h+1}\cup \dots \cup e^*_t$, we have 
\[|\mathcal{F}_k(S_k\cup \{v\})|< \frac{|\mathcal{F}_k|}{(tr+t)^{|S_k|+1}}.\]
Therefore the total number of edges in $\mathcal{F}_k(S_k)$ that contain at least one vertex in the union $e_1\cup \dots\cup e_{h-1}\cup e^*_{h+1}\cup \dots \cup e^*_t$ is at most
\[\sum_{v}|\mathcal{F}_k(S_k\cup \{v\})|< tr\cdot \frac{|\mathcal{F}_k|}{(tr+t)^{|S_k|+1}}=\frac{r}{r+1}\cdot \frac{|\mathcal{F}_k|}{(tr+t)^{|S_k|}}\]
(where the sum on the left-hand side is over all $v\in e_1\cup \dots\cup e_{h-1}\cup e^*_{h+1}\cup \dots \cup e^*_t$, noting that $|e_1\cup \dots\cup e_{h-1}\cup e^*_{h+1}\cup \dots \cup e^*_t|=(t-1)r\le tr$).
Thus, the family $\mathcal{F}_k(S_k)$ contains at least
\[|\mathcal{F}_k(S_k)|-\frac{r}{r+1}\cdot \frac{|\mathcal{F}_k|}{(tr+t)^{|S_k|}}\ge  \frac{|\mathcal{F}_k|}{(tr+t)^{|S_k|}}-\frac{r}{r+1}\cdot \frac{|\mathcal{F}_k|}{(tr+t)^{|S_k|}}=\frac{1}{r+1}\cdot \frac{|\mathcal{F}_k|}{(tr+t)^{|S_k|}}> \frac{1}{r+1}\cdot (tr+t)^{r-|S_k|}\]
edges that are disjoint from $e_1,\dots,e_{h-1}$ and $e^*_{h+1},\dots,e^*_t$ (in the last step, we used that $|\mathcal{F}_k|>(tr+t)^{r}$ since $k<\ell$). We claim  that these edges cannot all be contained in $M_{i_1}\cup \dots\cup M_{i_{h-1}}$. 

Indeed, if $S_k\neq \emptyset$, the family $\mathcal{F}_k(S_k)$ contains at most one edge from each of the matchings $M_{i_1}, \dots, M_{i_{h-1}}$ (since the edges in each matching are pairwise disjoint) and therefore at most $h-1<t$ edges in $M_{i_1}\cup \dots\cup M_{i_{h-1}}$. However, the number of edges in $\mathcal{F}_k(S_k)$ that are disjoint from $e_1,\dots,e_{h-1}$ and $e^*_{h+1},\dots,e^*_t$ is at least $(tr+t)^{r-|S_k|}/(r+1)\ge t$ (as $|S_k|\le r-1$). So there must indeed be at least one edge in $\mathcal{F}_k(S_k)$ that is disjoint from $e_1,\dots,e_{h-1}$ and $e^*_{h+1},\dots,e^*_t$ and is not contained in $M_{i_1}\cup \dots\cup M_{i_{h-1}}$.

In the other case, where $S_k= \emptyset$, the number of edges in $\mathcal{F}_k(S_k)$ that are disjoint from $e_1,\dots,e_{h-1}$ and $e^*_{h+1},\dots,e^*_t$ is at least $(tr+t)^{r-|S_k|}/(r+1)\ge (r+1)t^2$ (as $r-|S_k|=r\ge 2$). On the other hand, the number of edges in $M_{i_1}\cup \dots\cup M_{i_{h-1}}$ is at most $(h-1)t< t^2$, so again there must be at least one edge in $\mathcal{F}_k(S_k)$ that is disjoint from $e_1,\dots,e_{h-1}$ and $e^*_{h+1},\dots,e^*_t$ and is not contained in $M_{i_1}\cup \dots\cup M_{i_{h-1}}$.

In either case, let $e_h\in \mathcal{F}_k(S_k)$ be such an edge in $\mathcal{F}_k(S_k)$ that is disjoint from $e_1,\dots,e_{h-1}$ and $e^*_{h+1},\dots,e^*_t$ and not contained in $M_{i_1}\cup \dots\cup M_{i_{h-1}}$. Then, as desired, the edges $e_1,\dots,e_{h-1}, e_h, e^*_{h+1},\dots,e^*_t$ are pairwise disjoint (recall that $e_1,\dots,e_{h-1}, e^*_{h+1},\dots,e^*_t$ must be pairwise disjoint since we started with pairwise disjoint edges $e_1,\dots,e_{h-1},e^*_{h},\dots,e^*_t$). Since $e_h\in  \mathcal{F}_k(S_k)\su \mathcal{F}_k\su \mathcal{F}=M_1\cup \dots\cup M_N$ and $e_h\not\in M_{i_1}\cup \dots\cup M_{i_{h-1}}$, there must be an index $i_h\in \{1,\dots,N\}\setminus\{i_1,\dots,i_{h-1}\}$ with $e_h\in M_{i_h}$. Now $i_1,\dots,i_h$ are distinct, and we have $e_1\in M_{i_1},\dots,e_h\in M_{i_h}$ as desired. This finishes the induction step.
\end{proof}

We already saw that Claim \ref{claim-edges-outside-F-ell} finishes the proof of Theorem \ref{thm-fixed-r-with-bound}.\end{proof}

We remark that the condition in (\ref{eq-spread}) is motivated by the concept of  ``spread'' that first appeared in the context of the famous sunflower problem \cite{alweiss-lovett-wu-zhang, lovett-solomon-zhang}, and was also used in the proof of the fractional Kahn--Kalai conjecture by Frankston--Kahn--Narayanan--Park \cite{frankston-kahn-narayanan-park}. The idea of approximating an arbitrary set family by a union of well-spread families with small `cores' also appeared in \cite{kupavskii-zakharov} in the context of forbidden intersection problems. We also remark that the greedy process in the proof of Claim \ref{claim-edges-outside-F-ell} above is similar to an argument of Glebov--Sudakov--Sazb\'{o} in \cite{glebov-sudakov-szabo} (namely to the deduction of Theorem 2 from Lemma 7 on the second half of page 7 of \cite{glebov-sudakov-szabo}).

\section{Lower bound for fixed uniformity}
\label{sect-lower-bound-fixed-r}

After proving the upper bound in Theorem \ref{thm-fixed-r} in the previous section, here we show the lower bound.

\begin{theorem}\label{thm-construction-fixed-r}
Let $r\ge 3$ and $t\ge 2r$ be integers, and let $N=(\lfloor t/r\rfloor -1)^r$. Then there exist matchings $M_1,\dots,M_N$ of size $t$ in some $r$-partite $r$-uniform hypergraph with $tr$ vertices such that $M_1,\dots,M_N$ do not have a rainbow matching of size $t$.
\end{theorem}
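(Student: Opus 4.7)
The plan is to give an explicit construction and then verify that no rainbow matching of size $t$ exists among the produced matchings. Write $s=\lfloor t/r\rfloor-1$, so $N=s^r$ and $r(s+1)\le t$. The hypergraph has parts $V_1,\dots,V_r$ of size $t$. I would partition each $V_i$ into $s$ ``small'' blocks $B_i^{(1)},\dots,B_i^{(s)}$ of size $r$, together with a ``large'' block $B_i^{(0)}$ of size $t-rs\ge r$. Fix once and for all a ``block-diagonal'' perfect matching $M^{\ast}=N_0\cup N_1\cup\dots\cup N_s$ of $V_1\cup\dots\cup V_r$, in which each $N_k$ is itself a perfect matching supported on $B_1^{(k)}\cup\dots\cup B_r^{(k)}$ (so $|N_0|=t-rs$ and $|N_k|=r$ for $k\ge 1$). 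This $M^{\ast}$ serves as a common skeleton.

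For each $\mathbf{a}=(a_1,\dots,a_r)\in\{1,\dots,s\}^r$, the matching $M_{\mathbf{a}}$ is obtained from $M^{\ast}$ by deleting the block matchings $N_{a_1},\dots,N_{a_r}$ and replacing them by a prescribed family of ``cross'' edges forming a perfect matching on the vertex set $\bigcup_{i=1}^{r}(B_i^{(a_1)}\cup\dots\cup B_i^{(a_r)})$. The cross edges follow a Latin-square-style template keyed to $\mathbf{a}$: identify the $r$ vertices of each small block with $\mathbb{Z}_r$, and use cyclic-shift offsets indexed by $\mathbf{a}$, chosen so that the map $\mathbf{a}\mapsto M_{\mathbf{a}}$ is injective (yielding $s^r$ distinct matchings) and the cross-edge pattern encodes $\mathbf{a}$ rigidly.

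To rule out a rainbow matching of size $t$, suppose $e_1,\dots,e_t$ are pairwise disjoint with $e_j\in M_{\mathbf{a}^{(j)}}$ for distinct $\mathbf{a}^{(j)}\in[s]^r$. The only edges in $\bigcup_{\mathbf{a}}M_{\mathbf{a}}$ incident to the large blocks $B_1^{(0)}\cup\dots\cup B_r^{(0)}$ are those of $N_0$, and $N_0\subseteq M_{\mathbf{a}}$ for every $\mathbf{a}$; hence all $t-rs$ edges of $N_0$ must appear in the rainbow matching, using $t-rs$ of the $t$ distinct colors. The remaining $rs$ edges lie entirely inside the small blocks and must come from $rs$ further distinct colors. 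Classify each remaining edge as either a shared unmodified $N_k$-edge (possible for color $j$ only when $k\notin\{a_1^{(j)},\dots,a_r^{(j)}\}$) or a cross edge (specific to a single $\mathbf{a}^{(j)}$), and count, in each coordinate $i\in[r]$, the contributions into each small block $B_i^{(k)}$. The rigidity of the cross-edge template should then force the values $a_i^{(j)}$ to collide in at least one coordinate, contradicting the distinctness of the $\mathbf{a}^{(j)}$.

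I expect the main obstacle to be designing the cross-edge templates so that both (i) the $M_{\mathbf{a}}$ are genuinely distinct for distinct $\mathbf{a}$, and (ii) the templates are rigid enough to force the coordinate-wise collision in the last step. The delicate point is the degenerate case in which several coordinates of $\mathbf{a}$ coincide: only a proper subset of the block matchings $N_k$ is then deleted and the cross-edge count changes, so the Latin-square template must degrade gracefully if the rainbow-matching obstruction is to survive in those cases as well. A cyclic-shift design seems most promising, but verifying (i) and (ii) simultaneously across all such degenerations is the real combinatorial content of the proof.
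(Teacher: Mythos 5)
There is a genuine gap: you have outlined a plan, not a proof. The heart of the construction, the cross-edge template, is never specified, and you yourself identify that proving properties (i) injectivity and (ii) rigidity across all degenerations ``is the real combinatorial content of the proof.'' That content is precisely what is missing, and it is not a routine finishing step.

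The degenerate case you flag (coordinates of $\mathbf{a}$ coinciding) is a genuine obstruction, not a technicality. When all coordinates of $\mathbf{a}$ equal some single $k$, only the block matching $N_k$ is deleted, the replacement is just another perfect matching inside $B_1^{(k)}\cup\dots\cup B_r^{(k)}$, and there is no ``cross'' structure left to encode $\mathbf{a}$; injectivity of $\mathbf{a}\mapsto M_{\mathbf{a}}$ already looks hard to arrange, let alone the rigidity you need in the rainbow-matching argument. The paper sidesteps this issue structurally rather than fighting it: the $r$-tuples range over a product $X_1\times\dots\times X_r$ of \emph{pairwise disjoint} sets, so coordinates can never collide, and the replaced part of each matching always has the same shape (exactly $2r$ non-diagonal edges). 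Moreover, the paper introduces $r$ distinguished ``anchor'' labels $a_1,\dots,a_r$ per part, and the edges of type $(a_j,\dots,a_j,x_j,a_j,\dots,a_j)$ serve as a built-in decoding mechanism: any rainbow matching is forced to contain exactly one such edge per $j$, which pins down $x_j\in X_j$ uniquely (here the disjointness of the $X_j$ is essential), and then the cyclic-shift edges $\sigma^j(x_1,\dots,x_r)$ of rule (iii) deliver the contradiction because at least $r-1\ge 2$ of them are needed yet all live in the single matching $M_{x_1,\dots,x_r}$. Your construction lacks any analogue of this decoding anchor; a Latin-square template on blocks of size $r$ must somehow self-identify the full tuple $\mathbf{a}$, with a template whose very size (between $r$ and $r^2$ cross edges) changes with the number of distinct coordinates. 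To make your route work you would at minimum need to (a) restrict to $\mathbf{a}$ with distinct coordinates (but then a naive count drops to roughly $(s/r)^r$, losing the target bound unless you repartition the index set as the paper does), and (b) actually exhibit a rigid template and verify the collision argument. As written, neither step is carried out.
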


This theorem shows that $f(r,t)\ge (\lfloor t/r\rfloor -1)^r$ for $t\ge 2r$ (and $r\geq 3$).  For $t\ge  3r$, this implies
\[f(r,t)\ge (\lfloor t/r\rfloor -1)^r\ge \left(\frac{t}{r}-2\right)^2\ge \left(\frac{t}{3r}\right)^r=(3r)^{-r}\cdot t^r.\]
For $2\le t\le 3r$, we trivially have  $f(r,t)\ge 1\ge (3r)^{-r}\cdot t^r$. Thus, we obtain $f(t,r)\ge (3r)^{-r}\cdot t^r$ for all $r\ge 3$ and $t\ge 2$, showing the lower bound in Theorem \ref{thm-fixed-r} with $c_r=(3r)^{-r}$.

Theorem \ref{thm-construction-fixed-r} asserts the existence of $N$ matchings of size $t$ in an ($r$-partite) $r$-uniform hypergraph with $tr$ vertices. Note that this automatically means that each of the matchings $M_1,\dots,M_N$ is a perfect matching in this hypergraph.

In the proof of Theorem \ref{thm-construction-fixed-r}, we will use the following notation. For an $r$-tuple $(x_1,x_2,\dots,x_r)$ of positive integers, define $\sigma(x_1,x_2, \dots,x_r)$ to be the $r$-tuple $(x_2,x_3,\dots,x_r,x_1)$, i.e.\ the $r$-tuple obtained from $(x_1,x_2,\dots,x_r)$ by a cyclic forward-shift. For $1\le j\le r-1$, let us furthermore define $\sigma^j(x_1,x_2,\dots,x_r)$ to be the $r$-tuple $\sigma(\sigma(\dots \sigma(x_1,x_2,\dots,x_r)\dots))$ obtained by applying $\sigma$ as an operator $j$ times.

\begin{proof}[Proof Theorem \ref{thm-construction-fixed-r}]
We will construct an $r$-partite $r$-uniform hypergraph on $tr$ vertices. The vertex set of this hypergraph is split into $r$ parts, each of size $t$. In each part, let us label the $t$ vertices by $1,\dots,t-r$ and $a_1,\dots,a_r$ (the vertices labeled by $a_1,\dots,a_r$ will play a special role in the construction of our matchings). Let $\mathcal{H}$ be the complete $r$-partite $r$-uniform hypergraph between these $r$ parts.

Now, the edges of $\mathcal{H}$ correspond to $r$-tuples in the set $(\{1,\dots,t-r\}\cup\{a_1,\dots,a_r\})^r$ (where any edge $e\in \mathcal{H}$ corresponds to the $r$-tuple obtained by first taking the label of the vertex of $e$ in the first part of the $r$-partition, then the label of the vertex of $e$ in the second part, and so on).

A matching of size $t$ in $\mathcal{H}$ corresponds to a collection of $t$ different $r$-tuples in $(\{1,\dots,t-r\}\cup\{a_1,\dots,a_r\})^r$, such that any of the $t$ symbols $1,\dots,t-r,a_1,\dots,a_r$ occurs among these $r$-tuples  exactly once in every position.

In order to construct the desired matchings, let us fix a partition $\{1,\dots,t-r\}=X_1\cup\dots\cup X_r$ such that each of the sets $X_1,\dots,X_r$ has size at least $\lfloor (t-r)/r\rfloor=\lfloor t/r\rfloor -1$. Then $|X_1|\dotsm |X_r|\ge (\lfloor t/r\rfloor -1)^r=N$.

We will now define a matching $M_{x_1,\dots,x_r}$ of size $t$ in $\mathcal{H}$ for every $r$-tuple $(x_1,\dots,x_r)\in X_1\times\dots\times X_r$. It will then suffice to show that the collection of these matchings $M_{x_1,\dots,x_r}$ for $(x_1,\dots,x_r)\in X_1\times\dots\times X_r$ does not have a rainbow matching of size $t$ (indeed, as $|X_1\times\dots\times X_r|\ge N$, then we can take $M_1,\dots,M_N$ to be any $N$ of these matchings $M_{x_1,\dots,x_r}$).

For each $(x_1,\dots,x_r)\in X_1\times\dots\times X_r$, let us define the matching $M_{x_1,\dots,x_r}$ to consist of the $t$ edges in $\mathcal{H}$ corresponding to the following $t$ different $r$-tuples in $(\{1,\dots,t-r\}\cup\{a_1,\dots,a_r\})^r$:
\begin{itemize}
\item[(i)] Take the $r$-tuple $(a_1,\dots,a_r)$.
\item[(ii)] For each $j \in \{1,\dots,r\}$, take the $r$-tuple $(a_j,\dots,a_j,x_j,a_j,\dots,a_j)$ where $x_j$ is in the $j$-th position (and all other entries are $a_j$).
\item[(iii)] For each $j \in \{1,\dots,r-1\}$, take the $r$-tuple $\sigma^j(x_1,x_2,\dots,x_r)$ (see the definition right before the start of this proof).
\item[(iv)] For each $i\in \{1,\dots,t-r\}\setminus \{x_1,\dots,x_r\}$, let us take the $r$-tuple $(i,i,\dots,i)$.
\end{itemize}
Let us now check that this indeed leads to a matching of size $t$. First, note that $x_1,\dots,x_r$ are distinct (as $X_1\,\dots,X_r$ are pairwise disjoint). Hence the number of $r$-tuples taken in (iv) is $t-2r$, whereas the numbers for (i), (ii) and (iii) are 1, $r$ and $r-1$, respectively. Thus, $M_{x_1,\dots,x_r}$ indeed consists of $t$ edges. In order to check that $M_{x_1,\dots,x_r}$ is indeed a matching, it is easy to verify that each of the symbols $1,\dots,t-r,a_1,\dots,a_r$ occurs exactly once in each of the $r$ positions among the $r$-tuples defined above. Indeed every symbol $a_h$ for $h=1,\dots,r$ occurs in the $h$-th position in (i) and in all other positions in (ii) for $j= h$;  every symbol $x_h$ for $h=1,\dots,r$ occurs in the $h$-th position in (ii) for $j=h$ and in all other positions in (iii); and every symbol $i\in\{1,\dots,t-r\}\setminus \{x_1,\dots,x_r\}$ occurs in all positions in (iv). Thus $M_{x_1,\dots,x_r}$ is indeed a matching of size $t$ for each $(x_1,\dots,x_r)\in X_1\times\dots\times X_r$.

Note that for each $h \in \{1,\dots,r\}$, in all of $r$-tuples in (iii) the elements $X_h$ can only occur outside of the $h$-th position (indeed, in $\sigma^j(x_1,x_2,\dots,x_r)$, the position of the element $x_h\in X_h$ is congruent to $h+j\not\equiv h\pmod{r}$). Therefore, any edge in any of the matchings $M_{x_1,\dots,x_r}$ that has an element of $X_h$ in the $h$-th position must be obtained from rule (ii) or (iv).

It remains to show that there does not exist a rainbow matching of size $t$ for the matchings $M_{x_1,\dots,x_r}$ for $(x_1,\dots,x_r)\in X_1\times\dots\times X_r$. So suppose $M$ is such a rainbow matching of size $t$ (i.e.\ a matching of size $t$ that can be obtained by choosing $t$ distinct matchings $M_{x_1,\dots,x_r}$ in our collection and selecting one edge from each of them). Since the $r$-uniform hypergraph $\mathcal{H}$ only has $tr$ vertices in total, the matching $M$ must cover every vertex of $\mathcal{H}$. Furthermore, each edge of $M$ belongs to at least one of the matchings $M_{x_1,\dots,x_r}$ defined above, and must therefore be corresponding to an $r$-tuple in $(\{1,\dots,t-r\}\cup\{a_1,\dots,a_r\})^r$ that satisfies one of the rules (i)--(iv) above. In total, for the $t$ different $r$-tuples corresponding to the edges of the matching $M$, each of the symbols $1,\dots,t-r,a_1,\dots,a_r$ occurs exactly once in each of the $r$ positions.

For each $j \in \{1,\dots,r\}$, consider how we can obtain symbol $a_j$ outside the $j$-th position. The only way to achieve this is by taking an $r$-tuple from (ii). Thus, for every $j=1,\dots,r$, exactly one of the $r$-tuples corresponding to the edges of $M$ must have the form $(a_j,\dots,a_j,x_j,a_j,\dots,a_j)$, with $x_j$ in the $j$-th position, for some $x_j\in X_j$. Applying this argument for all $j=1,\dots,r$ specifies elements $x_1\in X_1,\dots,x_r\in X_r$ such that among the $r$-tuples corresponding to the edges of $M$ we have $(a_j,\dots,a_j,x_j,a_j,\dots,a_j)$ with $x_j$ in the $j$-th position for every $j=1,\dots,r$. 

Note that besides these $r$ specific $r$-tuples, none of the other $r$-tuples corresponding to the edges of $M$ can be generated by rule (ii). Indeed, whenever we apply rule (ii) with some $j=1,\dots,r$, we obtain an $r$-tuple with $a_j$ in all positions outside the $j$-th position. Since $a_j$ can only appear once in these positions among the $r$-tuples corresponding to the edges of $M$, besides the specific $r$-tuples $(a_j,\dots,a_j,x_j,a_j,\dots,a_j)$ with $x_j$ in the $j$-th position for $j=1,\dots,r$ (for the specific elements $x_1\in X_1,\dots, x_r\in X_r$ above) there cannot be any other $r$-tuples obtained from rule (ii).

On the other hand, for every $j=1,\dots,r$, each element $x_j'\in X_j\setminus \{x_j\}$  also needs to appear in the $j$-th position of some $r$-tuple corresponding to an edge of $M$. We observed above that the only way to achieve this is to use an $r$-tuple obtained from rule (ii) or (iv), but we just concluded that we cannot use rule (ii) anymore. This means that for every $j=1,\dots,r$ and every element $x_j'\in X_j\setminus \{x_j\}$ we have to use the $r$-tuple $(x_j',\dots,x_j')$ in  rule (iv) in order to get $x_j'$ into the $j$-th position. In other words, for every $j=1,\dots,r$ and every $x_j'\in X_j\setminus \{x_j\}$ the $r$-tuple $(x_j',\dots,x_j')$ needs to be one of the $r$-tuples corresponding to the edges of $M$.

Now, we claim that any $r$-tuple corresponding to an edge of $M$ that was obtained by rule (iii) can only be part of the matching $M_{x_1,\dots,x_r}$ (and no other matching $M_{x_1',\dots,x_r'}$). Indeed, suppose among the $r$-tuples corresponding to the edges of $M$ we have the $r$-tuple $\sigma^h(x_1',\dots,x_r')$ from rule (iii) for some $h\in \{1,\dots,r-1\}$ and some matching $M_{x_1',\dots,x_r'}$ with $(x_1',\dots,x_r')\in X_1\times\dots\times X_r$ such that $(x_1,\dots,x_r)\neq (x_1',\dots,x_r')$. Then there must be some index $j\in \{1,\dots,r\}$ such that $x_j'\neq x_j$ and hence $x_j'\in X_j\setminus \{x_j\}$. But then the $r$-tuple $(x_j',\dots,x_j')$ also corresponds to an edge of $M$, and it already has $x_j'$ in all positions. So we cannot also have $\sigma^h(x_1',\dots,x_r')$ (which also contains $x_j'$ in some position) among the $r$-tuples corresponding to the edges of $M$. This contradiction shows that indeed all $r$-tuples corresponding edges of $M$ obtained by rule (iii) are only part of the matching $M_{x_1,\dots,x_r}$. In particular, since $M$ is a rainbow matching, this means that there can be at most one $r$-tuple obtained by rule (iii) among the $r$-tuples corresponding to edges of $M$.

Recall that among the $r$-tuples corresponding to the edges of $M$ we have $(a_j,\dots,a_j,x_j,a_j,\dots,a_j)$ (with $x_j$ in the $j$-th position) for every $j=1,\dots,r$. This means that for each $j \in \{1,\dots,r\}$, we cannot have the $r$-tuple $(x_j,\dots,x_j)$ in (iv) among the $r$-tuples corresponding to the edges of $M$ anymore (otherwise, $x_j$ would appear in the $j$-th position more than once). However, for the edges of $M$ we still need to have $r$-tuples containing each $x_j$ for $j=1,\dots,r$ in all other positions besides the $j$-th position. In particular, for $j=2,\dots,r$, we need to have an $r$-tuple containing $x_j$ in the first position. The only way to achieve this is to use an $r$-tuple obtained by rule (iii) (indeed, we just saw that we cannot use rule (iv) for this, and note that rule (ii) only gives entries from $X_j$ in the $j$-th position). Thus, among the $r$-tuples corresponding to the edges of $M$, there need to be at least $r-1\ge 2$ different $r$-tuples obtained by rule (iii). This is a contradiction, since we previously proved that there can be at most one such $r$-tuple.

This contradiction shows that there cannot be a rainbow matching $M$ of size $t$ for the matchings $M_{x_1,\dots,x_r}$ for $(x_1,\dots,x_r)\in X_1\times\dots\times X_r$. This finishes the proof of Theorem \ref{thm-construction-fixed-r}.
\end{proof}

\section{Fixed matching sizes}
\label{sect-fixed-t}

In this section, we prove Theorems \ref{thm-fixed-t-F} and \ref{thm-fixed-t-f}. More specifically, we prove Theorem \ref{thm-fixed-t-F} in the first subsection, the lower bound in Theorem \ref{thm-fixed-t-f} in the second subsection, and the upper bound  in Theorem \ref{thm-fixed-t-f} in the third subsection. The lower bounds in both theorems are based on probabilistic arguments, but in the last subsection we give some simple explicit constructions (which give somewhat weaker bounds for fixed $t$ and large $r$, but better bounds if $t$ and $r$ are both large).

\subsection{Proof of Theorem \ref{thm-fixed-t-F}}
\label{subsect-fixed-t-F}

We can deduce Theorem \ref{thm-fixed-t-F} from the following proposition, which is a special case of a proposition due to Lov\'asz and the second author \cite{lovasz-sauermann} (to be precise, it corresponds to the case $m=2$ in \cite[Proposition 3.1]{lovasz-sauermann}). The proof of this proposition builds upon the approach in earlier work of Kleinberg--Sawin--Speyer \cite{kleinberg-sawin-speyer} in the case of $t=3$. However, in the special case stated below the proof becomes much simpler and shorter (similar to the proof of Theorem \ref{thm-fixed-t-f-stronger} below, while the proof of the more general proposition in \cite{lovasz-sauermann} takes up around 40 pages).

\begin{proposition}\label{prop-t-color-sum-free}
For every fixed $t\ge 3$, there exists a constant $C_t\ge 0$ such that the following holds. For every positive number $n$ which is divisible by $t$, there exists a collection of $t$-tuples $(x_{1,j},x_{2,j},\dots,x_{t,j})\in \{0,1\}^n\times \dots\times \{0,1\}^n$ for $j=1,\dots,N$ with
\[N\ge \left(\frac{t^t}{(t-1)^{t-1}}\right)^{n/t-C_t\sqrt{n}}\]
such that the following conditions hold:
\begin{itemize}
\item For all $j_1,\dots,j_t\in \{1,\dots,N\}$, we have
\[x_{1,j_1}+\dots+x_{t,j_t}=\mathbbm{1}^n \text{ in }\mathbb{R}^n\quad\text{if and only if}\quad j_1=j_2=\dots=j_t.\]
\item For all $i=1,\dots,t$ and $j=1,\dots,N$, the vector $x_{i,j}\in \{0,1\}^n$ consists of precisely $n/t$ ones.
\end{itemize}
\end{proposition}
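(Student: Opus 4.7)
The proposition is the case $m=2$ of \cite[Proposition 3.1]{lovasz-sauermann}, so one clean route is to invoke that result as a black box. Below I outline the strategy underlying it, which extends Kleinberg--Sawin--Speyer \cite{kleinberg-sawin-speyer} from $t=3$ to arbitrary $t\ge 3$. The first step is a reformulation: since each $x_{k,j}\in\{0,1\}^n$ has exactly $n/t$ ones, the identity $x_{1,j_1}+\dots+x_{t,j_t}=\mathbbm{1}^n$ holds if and only if, coordinatewise, exactly one of $x_{1,j_1},\dots,x_{t,j_t}$ is $1$; equivalently, the supports $A_{k,j_k}=\operatorname{supp}(x_{k,j_k})$ form an ordered partition of $[n]$ into $t$ blocks of size $n/t$. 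Writing $\pi_j=(A_{1,j},\dots,A_{t,j})$, the task becomes: produce a large family $\pi_1,\dots,\pi_N$ of ordered equal-size partitions of $[n]$ such that for every off-diagonal $(j_1,\dots,j_t)\in[N]^t$, the ``diagonal transversal'' $(A_{1,j_1},\dots,A_{t,j_t})$ fails to be a partition of $[n]$.

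The construction is a tensor-power probabilistic one. One first establishes a combinatorial lemma on a constant-size ``seed'' ground set, exhibiting a family of ordered equipartitions whose rate per coordinate equals $t^t/(t-1)^{t-1}$ and which possesses the required off-diagonal transversal-avoidance property. The extremal rate here is best understood via an entropy functional on distributions over ordered equipartitions of a $t$-element set; equivalently, it is determined by the probability that $t$ independent coordinatewise-uniform maps $[n]\to[t]$ produce a valid transversal partition when cross-sampled, and an inverse computation gives the exponent $\log(t^t/(t-1)^{t-1})$. The global family on $[n]$ is then obtained by a tensor-power lift from this seed, followed by a Chernoff-type cleanup step that conditions on all $t$ parts having size exactly $n/t$ and a deletion step that removes the subexponentially many surviving off-diagonal bad transversals.

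The main rate $t^t/(t-1)^{t-1}$ is thus delivered entirely by the seed, while the $-C_t\sqrt n$ correction in the exponent comes from the conditioning/cleanup (Stirling and Chernoff-type concentration of block-type profiles, plus the deletion overhead). The chief obstacle, and the technical heart of the argument, is the construction of the extremal seed together with the verification that its tensor power has only subexponentially many bad transversals: in \cite{lovasz-sauermann} this is handled by a probabilistic construction on ``types'' combined with an additive-combinatorial filtering step that removes structured obstructions arising from near-repetitions of coordinates. Alternatively, one can proceed via a direct construction using vectors supported on slices of the Boolean cube, as is done for $t=3$ in \cite{kleinberg-sawin-speyer}; the entropy computation governing the exponent is identical in either presentation.
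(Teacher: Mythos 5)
Your proposal matches the paper's treatment: the paper also proves this proposition simply by citing it as the case $m=2$ of \cite[Proposition 3.1]{lovasz-sauermann} (whose proof extends Kleinberg--Sawin--Speyer from $t=3$), and then verifying that the constant $\Gamma_{2,t}=t/(t-1)^{(t-1)/t}$ appearing there yields the stated rate $(t^t/(t-1)^{t-1})^{n/t-O(\sqrt{n})}$. Your sketch of the underlying tensor-power/cleanup strategy is consistent with that reference, so this is essentially the same approach.
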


Here, $\mathbbm{1}^n$ denotes the all-ones vector. Let us remark that the lower bound for $N$ in \cite[Proposition 3.1]{lovasz-sauermann} is written as $N\ge \Gamma_{2,t}^{n-O(\sqrt{n})}$ (where the implicit constant in the $O$-notation may depend on $t$), but by the definition of $\Gamma_{2,t}$ from \cite[pp. 1]{lovasz-sauermann} we have
\[\Gamma_{2,t}=\min_{0<\gamma<1} \frac{1+\gamma}{\gamma^{1/t}}= \frac{1+1/(t-1)}{(t-1)^{-1/t}}=\frac{t}{(t-1)^{(t-1)/t}}\]
and hence we indeed obtain
\[N\ge \Gamma_{2,t}^{n-O(\sqrt{n})}=\left( \frac{t}{(t-1)^{(t-1)/t}}\right)^{n-O(\sqrt{n})}=\left( \frac{t^t}{(t-1)^{t-1}}\right)^{n/t-O(\sqrt{n})}\]
as stated above.

Let us now deduce Theorem \ref{thm-fixed-t-F} from Proposition \ref{prop-t-color-sum-free}. In fact, this deduction gives the following stronger statement, which immediately implies Theorem \ref{thm-fixed-t-F}.

\begin{theorem}\label{thm-fixed-t-F-stronger}
For every fixed $t\ge 2$, there exists a constant $D_t\ge 0$ such that the  following holds. For any $r\ge 2$, there exist matchings $M_1,\dots,M_N$ of size $t$ in some $r$-uniform hypergraph with $tr$ vertices, where
\[N\ge \left(\frac{t^t}{(t-1)^{t-1}}\right)^{r-D_t\sqrt{r}}\]
and such that for any pairwise disjoint edges $e_1\in M_{j_1},\dots,e_t\in M_{j_t}$ (for some indices $j_1,\dots,j_t\in \{1,\dots,N\}$) we must have $j_1=\dots=j_t$.
\end{theorem}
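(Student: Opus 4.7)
The plan is to apply Proposition \ref{prop-t-color-sum-free} as a black box, and encode the ``slot index'' $i$ within each $t$-tuple $(x_{1,j},\dots,x_{t,j})$ using $t$ additional hypergraph vertices so that a rainbow matching is forced into the form handled by the proposition. Assume first that $t\ge 3$. I would apply Proposition \ref{prop-t-color-sum-free} with $n=t(r-1)$ (which is divisible by $t$), producing $t$-tuples $(x_{1,j},\dots,x_{t,j})\in(\{0,1\}^n)^t$ for $j=1,\dots,N$ with each $x_{i,j}$ having exactly $n/t=r-1$ ones, and with
\[N\ge \left(\frac{t^t}{(t-1)^{t-1}}\right)^{(r-1)-C_t\sqrt{t(r-1)}}\ge \left(\frac{t^t}{(t-1)^{t-1}}\right)^{r-D_t\sqrt{r}}\]
for an appropriate constant $D_t$ depending only on $t$.

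Next I would take the vertex set $V=[n]\cup\{v_1,\dots,v_t\}$, which has size $n+t=tr$, and for each $j$ and each $i\in\{1,\dots,t\}$ define the $r$-element edge
\[E_{i,j}=\{v\in[n]:x_{i,j}(v)=1\}\cup\{v_i\}.\]
Set $M_j=\{E_{1,j},\dots,E_{t,j}\}$. Since $x_{1,j}+\dots+x_{t,j}=\mathbbm{1}^n$ and each summand has $n/t$ ones, the supports $\{v\in[n]:x_{i,j}(v)=1\}$ partition $[n]$; together with the distinct vertices $v_1,\dots,v_t$ this makes $E_{1,j},\dots,E_{t,j}$ pairwise disjoint, so $M_j$ is a perfect matching on $V$ of size $t$.

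Now suppose pairwise disjoint edges $e_1\in M_{j_1},\dots,e_t\in M_{j_t}$ are given; write $e_i=E_{k_i,j_i}$ for some $k_i\in\{1,\dots,t\}$. These $t$ edges cover all $tr$ vertices, so in particular $v_{k_1},\dots,v_{k_t}$ must be distinct, whence $(k_1,\dots,k_t)$ is a permutation of $(1,\dots,t)$; let $\sigma$ be its inverse. Restricting the indicator identity $\sum_i\mathbbm{1}_{e_i}=\mathbbm{1}^V$ to the coordinates in $[n]$ yields $\sum_{i=1}^t x_{k_i,j_i}=\sum_{i=1}^t x_{i,j_{\sigma(i)}}=\mathbbm{1}^n$, and Proposition \ref{prop-t-color-sum-free} then forces $j_{\sigma(1)}=\dots=j_{\sigma(t)}$, i.e.\ $j_1=\dots=j_t$, as required.

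The conceptual obstacle is that Proposition \ref{prop-t-color-sum-free} controls only sums of the specific form $x_{1,j_1}+\dots+x_{t,j_t}$ in which the $i$-th summand comes from slot $i$, whereas a rainbow matching only specifies which $M_{j_i}$ each edge belongs to, not which slot inside that matching was used. The auxiliary vertices $v_1,\dots,v_t$ are introduced precisely to record this slot information: their pairwise disjointness forces the slot indices $k_i$ to form a permutation, after which the proposition applies directly. The remaining case $t=2$ is not covered by Proposition \ref{prop-t-color-sum-free}, but it can be handled directly by taking $M_j=\{A_j,\,[2r]\setminus A_j\}$ over an intersecting family $\{A_j\}$ of $r$-subsets of $[2r]$ containing no complementary pair; the standard star construction gives such a family of size $\binom{2r-1}{r-1}\ge 4^{r-O(\sqrt{r})}$, matching the claimed bound with $t^t/(t-1)^{t-1}=4$.
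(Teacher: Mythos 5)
Your proposal is correct and follows essentially the same route as the paper's proof: both apply Proposition \ref{prop-t-color-sum-free} with $n=t(r-1)$, attach $t$ auxiliary vertices $a_1,\dots,a_t$ (your $v_1,\dots,v_t$) to record the slot index so that pairwise disjointness forces a permutation of slots before invoking the proposition, and handle $t=2$ separately via the $\binom{2r}{r}/2$ complementary-pair matchings in the complete $r$-uniform hypergraph on $2r$ vertices (your star family yields exactly the same collection of matchings).
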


Note that in order for $M_1,\dots,M_N$ not to have a rainbow matching (which suffices for deducing Theorem \ref{thm-fixed-t-F}), it would already be sufficient to have ``at least two of the indices $j_1,\dots,j_t$ must agree'' instead of ``we must have $j_1=\dots=j_t$'' at the end of the statement. In this sense, Theorem \ref{thm-fixed-t-F-stronger} is significantly stronger than  Theorem \ref{thm-fixed-t-F}.

\begin{proof}[Proof of Theorem \ref{thm-fixed-t-F-stronger}]
First, let us consider the case that $t=2$. For any $r\ge 2$, let us consider a complete $r$-uniform hypergraph on $2r$ vertices. The edges of this hypergraph can be partitioned into $N=\binom{2r}{r}/2$ matchings of size $2$ (one matching for each of the ways to split the vertex set into two subsets of size $r$). These matchings satisfy the conditions in Theorem \ref{thm-fixed-t-F-stronger}, since any two disjoint edges indeed belong to the same matching and since $N=\binom{2r}{r}/2\ge 2^{2r}/(4r+2)\ge 4^r/(4\sqrt{r})^2\ge 4^r/(2^{4\sqrt{r}})^2= (2^2/1^1)^{r-4\sqrt{r}}$. Thus, the statement in Theorem \ref{thm-fixed-t-F-stronger} for $t=2$ holds with $D_2=4$.

Let us now assume that $t\ge 3$, and define $D_t=C_t\cdot \sqrt{t}+1$ for the constant $C_t\ge 0$ in Proposition \ref{prop-t-color-sum-free}. Let us apply Proposition \ref{prop-t-color-sum-free} with $n=t(r-1)$. We obtain a a collection of $t$-tuples $(x_{1,j},x_{2,j},\dots,x_{t,j})\in \{0,1\}^n\times \dots\times \{0,1\}^n$ for $j=1,\dots,N$ with
\[N\ge \left(\frac{t^t}{(t-1)^{t-1}}\right)^{n/t-C_t\sqrt{n}}\ge \left(\frac{t^t}{(t-1)^{t-1}}\right)^{r-1-C_t\sqrt{tr}}\ge \left(\frac{t^t}{(t-1)^{t-1}}\right)^{r-D_t\sqrt{r}}\]
satisfying the conditions in Proposition \ref{prop-t-color-sum-free}.

To construct the desired matchings $M_1,\dots,M_N$, let us consider the complete $r$-uniform hypergraph on $tr$ vertices labeled by $1,\dots,n$ and $a_1,\dots,a_t$ (recall that $n=t(r-1)$). For $j=1,\dots,N$, let us define a matching $M_j$ of size $t$ as follows. By the second condition in Proposition \ref{prop-t-color-sum-free}, for $i=1,\dots,t$ the vector $x_{i,j}\in \{0,1\}^n$ consists of precisely $n/t=r-1$ ones, so it corresponds to a subset $X_{i,j}\su \{1,\dots,n\}$ of size $r-1$. Now, let the matching $M_j$ consist of the $t$ edges $\{a_i\}\cup X_{i,j}$ for $i=1,\dots,t$.

Each of these edges has size $r$, so in order to check that $M_j$ is indeed a matching, it suffices to check that $\bigcup_{i=1}^{t} (\{a_i\}\cup X_{i,j})=\{1,\dots,n\}\cup \{a_1,\dots,a_t\}$. Indeed, by the first condition in Proposition \ref{prop-t-color-sum-free} we have $x_{1,j}+\dots+x_{t,j}=\mathbbm{1}^n$ and hence $\bigcup _{i=1}^{t} X_{i,j}=\{1,\dots,n\}$. So we indeed obtain $\bigcup_{i=1}^{t} (\{a_i\}\cup X_{i,j})=\{a_1,\dots,a_t\}\cup \bigcup _{i=1}^{t} X_{i,j}=\{1,\dots,n\}\cup \{a_1,\dots,a_t\}$ and $M_j$ is a matching for each $j \in \{1,\dots,n\}$.

It remains to show that for any pairwise disjoint edges $e_1\in M_{j_1},\dots,e_t\in M_{j_t}$ (for some indices $j_1,\dots,j_t\in \{1,\dots,N\}$) we must have $j_1=\dots=j_t$. Each edge contains precisely one element from $\{a_1,\dots,a_t\}$, and the edges $e_1,\dots,e_t$ must contain distinct elements from $\{a_1,\dots,a_t\}$ since they are pairwise disjoint. Upon relabeling, we may assume that $e_i$ contains $a_i$ for $i=1,\dots,t$. Then for $i=1,\dots,t$, we have $e_i=\{a_i\}\cup X_{i,j_i}$, where $X_{i,j_i}\su \{1,\dots,n\}$ is the subset (of size $r-1$) corresponding to ones the vector $x_{i,j_i}\in \{0,1\}^n$. Since $e_1,\dots,e_t$ are $t$ pairwise disjoint subsets of size $r$ of the vertex set of  $\{1,\dots,n\}\cup \{a_1,\dots,a_t\}$ of size $tr$, we have $\bigcup_{i=1}^{t} (\{a_i\}\cup X_{i,j_i})=e_1\cup\dots\cup e_t=\{1,\dots,n\}\cup \{a_1,\dots,a_t\}$. Thus, $\bigcup _{i=1}^{t} X_{i,j_i}=\{1,\dots,n\}$ and therefore (as the sets $X_{i,j_i}\su e_i$ for $i=1,\dots,t$ are pairwise disjoint) we have $x_{1,j_1}+\dots+x_{t,j_t}=\mathbbm{1}^n$. But by the first condition in Proposition \ref{prop-t-color-sum-free} this implies that $j_1=\dots=j_t$, as desired.
\end{proof}

\subsection{Proof of the lower bound in Theorem \ref{thm-fixed-t-f}}
\label{subsect-fixed-t-f}

In this subsection, we prove the lower bound in Theorem  \ref{thm-fixed-t-f} for $r$-partite hypergraphs. Again, we prove a significantly stronger statement (which immediately implies the lower bound in Theorem  \ref{thm-fixed-t-f}):

\begin{theorem}\label{thm-fixed-t-f-stronger}
For every fixed $t\ge 2$, there exists a constant $D_t\ge 0$ such that the  following holds. For any $r\ge 2$, there exist matchings $M_1,\dots,M_N$ of size $t$ in some $r$-partite $r$-uniform hypergraph with $tr$ vertices, where
\[N\ge t^{r-D_t\sqrt{r}}\]
and such that for any pairwise disjoint edges $e_1\in M_{j_1},\dots,e_t\in M_{j_t}$ (for some indices $j_1,\dots,j_t\in \{1,\dots,N\}$) we must have $j_1=\dots=j_t$.
\end{theorem}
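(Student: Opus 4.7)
The plan is to mirror the proof of Theorem \ref{thm-fixed-t-F-stronger}: establish an $r$-partite analogue of Proposition \ref{prop-t-color-sum-free}, and then translate it into matchings using the same $a_i$-labelling trick. Label the parts of the $r$-partite hypergraph as $V_1 = \{a_1,\dots,a_t\}$ (``edge labels'') and $V_k \cong \mathbb{Z}_t$ for $k = 2,\dots,r$. Encode each perfect matching $M_j$ by vectors $x_{i,j} \in \mathbb{Z}_t^{r-1}$, where $(x_{i,j})_{k-1}$ records the vertex in $V_k$ lying in the edge of $M_j$ labeled by $a_i$. The matching condition forces, for every coordinate $k$, the values $(x_{i,j})_{k-1}$ (for $i=1,\dots,t$) to form a permutation of $\mathbb{Z}_t$; equivalently, the $t\times(r-1)$ matrix with rows $x_{1,j},\dots,x_{t,j}$ is a \emph{Latin rectangle}. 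A rainbow matching across $M_{j_1},\dots,M_{j_t}$ exists if and only if, after some permutation $\sigma \in S_t$ of edge labels, the matrix whose $l$-th row is $x_{\sigma(l),j_l}$ is also a Latin rectangle. The goal is to produce $N \geq t^{(r-1)-O(\sqrt{r})}$ such Latin-rectangle tuples in which only the trivial ``rainbow Latin rectangle'' $j_1=\dots=j_t$ occurs.

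A natural first attempt is the \emph{shift construction}: parametrize matchings by $b = (b_1,\dots,b_{r-1}) \in \mathbb{Z}_t^{r-1}$, and let the edge of $M_b$ labeled by $a_i$ be $(a_i, i+b_1, \dots, i+b_{r-1})$ (coordinate-wise modulo $t$). A rainbow matching from shifts $b^{(j_1)},\dots,b^{(j_t)}$ requires, for every $k = 1,\dots,r-1$, the map $l \mapsto l + b_k^{(j_l)}$ to be a bijection of $\mathbb{Z}_t$; summing over $l$, this forces the \emph{linear} relation $b^{(j_1)}+\dots+b^{(j_t)} \equiv 0 \pmod{t}$. Hence any multi-colored sum-free set $B \subseteq \mathbb{Z}_t^{r-1}$ arising from the Kleinberg--Sawin--Speyer / Lov\'asz--Sauermann constructions that underlie Proposition \ref{prop-t-color-sum-free} already yields a rainbow-free family of matchings -- but only of size at most $\Gamma_{t,t}^{r-1}$ with $\Gamma_{t,t} < t$ for $t \ge 3$, which falls short of the target $t^{r-O(\sqrt{r})}$.

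Closing this gap is the main obstacle, and it is exactly what the authors mean by ``adapt the strategy and use different arguments to control linear dependencies that affect the relevant probabilistic bounds.'' The key asymmetry to exploit is that the rainbow obstruction is a strictly stronger condition than the mere sum-zero relation: for independent uniformly random shifts the sum-zero event has probability $t^{-(r-1)}$, whereas the full Latin-rectangle bijectivity event has probability only $(t!/t^t)^{r-1}$. The plan is to enrich the parameter space -- for example by coupling each shift $b$ with independent auxiliary randomization (random permutations applied coordinate-wise), or by replacing the translations with affine maps $i \mapsto c_k i + b_k$ with nontrivial multipliers -- so that a non-trivial rainbow matching now forces a genuinely Latin-rectangle-type relation rather than a purely linear one. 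Adapting the Lov\'asz--Sauermann / Kleinberg--Sawin--Speyer probabilistic framework to count these rarer bijectivity events should then recover the full exponential rate $t^{r-O(\sqrt{r})}$ via a Behrend-type construction on the enriched parameter space followed by a deletion argument costing only a $2^{O(\sqrt{r})}$ factor. The hardest step will be ensuring that the concentration / entropy estimates from LS survive the move from purely linear to combinatorial obstructions: the ``linear dependencies'' governing the LS entropy calculation must be replaced by, or reduced to, Latin-rectangle analogues in a way that still admits the needed concentration.
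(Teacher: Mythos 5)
Your proposal correctly sets up the $r$-partite problem (matchings as Latin rectangles, the rainbow obstruction as a bijectivity condition in each coordinate), correctly observes that the naive shift construction only captures the linear relation $b^{(j_1)}+\dots+b^{(j_t)}\equiv 0$ and therefore tops out at $\Gamma^{r}$ for some $\Gamma<t$, and correctly identifies that the whole difficulty is to exploit the gap between the sum-zero event (probability $t^{-(r-1)}$) and the full bijectivity event (probability $(t!/t^t)^{r-1}$). But the proof stops exactly where the actual work begins. The proposed fix --- ``enrich the parameter space'' by coupling shifts with random permutations or affine multipliers so that the obstruction becomes ``Latin-rectangle-type'' --- is not carried out, and the concluding sentences (``should then recover the full exponential rate,'' ``the hardest step will be ensuring that the concentration / entropy estimates survive'') concede that the central technical content is missing. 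As written this is a research plan, not a proof.

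For contrast, the paper does not enrich a shift parametrization at all. It starts from \emph{all} $t!^{r-1}$ matchings $(z_1,\dots,z_t)\in Z_1\times\dots\times Z_t$ with $z_1+\dots+z_t=\mathbbm{1}^{tr}$, chooses a prime $P=\Theta\bigl(t^{t+1}(t-1)!^{(r-1)/(t-2)}\bigr)$, applies a uniformly random linear map $f:\mathbb{F}_P^{tr}\to\mathbb{F}_P$ with $f(\mathbbm{1}^{tr})=0$, keeps a matching iff $(f(z_1),\dots,f(z_t))$ hits a Behrend-type multi-colored sum-free collection in $\mathbb{F}_P^t$ (Lemma \ref{lemma-behrend}), and then deletes every surviving matching that shares an edge with another survivor. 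The piece your plan is missing is precisely what makes this deletion affordable: a structural count (Lemma \ref{lemma-bound-possibilities z-prime}) showing that, for a fixed matching, the number of other matchings sharing an edge with it and spanning a space of dimension $d$ is at most $t^t\cdot(t-1)!^{(r-1)(d-t)/(t-2)}$. This is proved by relating $d$ to the number of connected components of a bipartite edge-intersection graph (Claim \ref{claim-number-connected-components}) and counting Latin-rectangle completions component by component (Claims \ref{claim-number-z-prime-given-components} and \ref{claim-factorials}); combined with the collision probability $R/P^{d-1}$ for span dimension $d$ and the above choice of $P$, each term of the deletion sum is at most half the first-moment count. This is the concrete replacement for the ``linear dependencies'' analysis that you flag as the hardest step but do not supply, so the argument has a genuine gap there.
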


We prove this theorem with a very similar strategy as in the proof of Lov\'asz and the second author \cite{lovasz-sauermann} for what is stated as Proposition \ref{prop-t-color-sum-free} above, which is in turn very similar to the proof of Kleinberg--Sawin--Speyer \cite{kleinberg-sawin-speyer} in the case of $t=3$.

In the proof of Theorem \ref{thm-fixed-t-f-stronger}, we will use the following lemma. This lemma appears as \cite[Lemma 3.4]{lovasz-sauermann} and its proof relies on a modification of Behrend's construction \cite{behrend} due to Alon \cite[Lemma 3.1]{alon}.

\begin{lemma}\label{lemma-behrend}
For every integer $t\ge 3$ and every prime $P\ge t$, there is a collection of $t$-tuples $(y_{1,h},y_{2,h},\dots,y_{t,h})\in \mathbb{F}_P\times \dots\times \mathbb{F}_P$ for $h=1,\dots,R$ with $R\ge P\cdot \exp(-12\sqrt{\ln P\ln t})$
such that for all $h_1,\dots,h_t\in \{1,\dots,R\}$ we have
\begin{equation}\label{eq-behrend}
y_{1,h_1}+\dots+y_{t,h_t}=0\text{ in }\mathbb{F}_P\quad\text{if and only if}\quad h_1=h_2=\dots=h_t.
\end{equation}
\end{lemma}

Note that the conditions in this lemma in particular imply that for each $i \in \{1,\dots,t\}$ the vectors $y_{i,1},\dots,y_{i,R}$ are distinct (indeed, if $y_{i,h}=y_{i,h'}$ for $h\neq h'$, then we have $y_{1,h}+\dots+y_{i-1,h}+y_{i,h'}+y_{i+1,h}+\dots+y_{t,h}=y_{1,h}+\dots+y_{t,h}=0$ contradicting the condition).

In order to prove Theorem \ref{thm-fixed-t-f-stronger}, we will use a randomized procedure to sample certain matchings in a complete $r$-partite $r$-uniform hypergraph with $t$ vertices in each part. This procedure will use a collection of $t$-tuples $(y_{1,h},y_{2,h},\dots,y_{t,h})$ as in Lemma \ref{lemma-behrend}, for a suitably chosen prime $P$. To describe the procedure, we make the following definitions.

\begin{setting}\label{setting}
    Let $t\ge 3$ and $r\ge 2$ be integers. Fix a prime $P$ such that
    \begin{equation}\label{eq-def-P}
2t^{t+1}\cdot (t-1)!^{(r-1)/(t-2)}\le P\le 4t^{t+1}\cdot (t-1)!^{(r-1)/(t-2)},
\end{equation}
and a collection of $t$-tuples $(y_{1,h},y_{2,h},\dots,y_{t,h})\in \mathbb{F}_P\times \dots\times \mathbb{F}_P$ for $h=1,\dots,R$ as in Lemma \ref{lemma-behrend}. For $i=1,\dots,t$, define $Y_i=\{y_{i,h}\mid 1\le h\le R\}$.

Furthermore, let $Z\su\{0,1\}^{tr}\su \mathbb{F}_P^{tr}$ be the set of vectors $z\in \{0,1\}^{tr}$ such that the restriction of $z$ to each of the index subsets $\{1,\dots,t\}, \{t+1,\dots, 2t\}, \dots, \{t(r-1)+1,\dots, tr\}$ has exactly one $1$-entry. Consider the partition $Z=Z_1\cup\dots\cup Z_t$ given by defining $Z_i$ for $i=1,\dots,t$ to be the set of vectors $z\in Z$ such that the $i$-th entry of $z$ is $1$.

Now, let $f:\mathbb{F}_P^{tr}\to \mathbb{F}_P$ a uniformly random $\mathbb{F}_P$-linear map with the condition that $f(\mathbbm{1}^{tr})=0$.

Let us say that a $t$-tuple $(z_1,\dots,z_t)\in Z_1\times \dots\times Z_t$ is a \emph{candidate $t$-tuple} if $z_1+\dots+z_t=\mathbbm{1}^{tr}$ and $f(z_i)\in Y_i$ for $i=1,\dots,t$. Let us sat that a candidate $t$-tuple $(z_1,\dots,z_t)\in Z_1\times \dots\times Z_t$ is \emph{isolated} if there is no other candidate $t$-tuple $(z_1',\dots,z_t')\in Z_1\times \dots\times Z_t$ such that $z_i'=z_i$ for some $i\in \{1,\dots,t\}$.
\end{setting}

We remark that for any $t\ge 3$, finding a prime $P$ satisfying (\ref{eq-def-P}) is possible by Bertrand's postulate. Note that the notions of a candidate $t$-tuple and an isolated candidate $t$-tuple depend on the outcome of the random function $f$ (since the sets $Z_1',\dots,Z_t'$ depend on the outcome of $f$).

To motivate this setup, we start with a few simple observations. First, note that the vectors in $Z$ are in correspondence to the edges of a complete $r$-partite $r$-uniform hypergraph with $t$ vertices in each part. Indeed, let us consider a complete $r$-partite $r$-uniform hypergraph with vertex set $\{1,\dots,tr\}$, partitioned into the $r$ parts $\{1,\dots,t\},\allowbreak  \{t+1,\dots, 2t\}, \dots, \{t(r-1)+1,\dots, tr\}$, each of size $t$. The indicator vectors of the edges of this hypergraph are precisely the vectors in $Z$. To see this, note that the elements of $Z\su\{0,1\}^{tr}\su \mathbb{F}_P^{tr}$ are precisely those vectors with exactly $r$ ones and $tr-r$ zeroes, which have exactly one $1$-entry among the first $t$ indices (i.e.\ among the index set $\{1,\dots,t\}$), exactly one $1$-entry among the next $t$ indices (i.e.\ among the index set $\{t+1,\dots,2t\}$), and so on.

Now, a matching of size $t$ in this hypergraph corresponds to a collection $\{z_1,\dots,z_t\}\su Z$ of $t$ vectors in $Z$ with $z_1+\dots+z_t=\mathbbm{1}^{tr}$. Note that here it does not matter whether the equation $z_+\dots+z_t=\mathbbm{1}^{tr}$ is considered over $\mathbb{F}_P$ or over the reals. Indeed, each of the vectors $z_1,\dots,z_t\in\{0,1\}^{tr}$ consists of precisely $r$ ones, and so the $t$ vectors sum to $\mathbbm{1}^{tr}$ over $\mathbb{F}_P$ (or over the reals) if and only if their supports are a partition of $\{1,\dots,tr\}$.

For any $\{z_1,\dots,z_t\}\su Z$ with $z_1+\dots+z_t=\mathbbm{1}^{tr}$ (i.e. for any matching of size $t$ in the hypergraph), each of the (pairwise disjoint) sets $Z_1,\dots,Z_t$ must contain precisely one of the vectors $z_1,\dots,z_t$. Thus, upon relabeling the indices, each matching of size $t$ in the hypergraph corresponds to a $t$-tuple of vectors $(z_1,\dots,z_t)\in Z_1\times\dots\times Z_t$ with $z_1+\dots+z_t=\mathbbm{1}^{tr}$. To obtain the desired matchings in Theorem \ref{thm-fixed-t-f-stronger}, we will consider the matchings corresponding to isolated candidate $t$-tuples $(z_1,\dots,z_t)\in Z_1\times\dots\times Z_t$. It is therefore important to show that (at least for some outcome of the random map $f$) there are many isolated candidate $t$-tuples.

The following lemma gives a lower bound for the probability that a given $t$-tuple $(z_1,\dots,z_t)\in Z_1\times\dots\times Z_t$ with $z_1+\dots+z_t=\mathbbm{1}^{tr}$ is an isolated candidate $t$-tuple. Proving this lemma is the hardest part of the proof of Theorem \ref{thm-fixed-t-f-stronger}.

\begin{lemma}\label{lemma-probability-isolated-candidate}
Consider Setting \ref{setting}. Suppose that $(z_1,\dots,z_t)\in Z_1\times\dots\times Z_t$ satisfies $z_1+\dots+z_t=\mathbbm{1}^{tr}$. Then the probability that $(z_1,\dots,z_t)$ is an isolated candidate $t$-tuple is at least $R/(2P^{t-1})$.
\end{lemma}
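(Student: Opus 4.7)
The plan is a first-moment / union-bound argument: compute $\Pr[(z_1,\ldots,z_t)\text{ is a candidate}]$ exactly, then upper-bound via a union bound the probability that it is a candidate but not isolated, showing the difference is at least $R/(2P^{t-1})$.

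The key initial observation is that $\mathbbm{1}^{tr},z_1,\ldots,z_{t-1}$ are linearly independent in $\mathbb{F}_P^{tr}$, since their restrictions to the first block $\{1,\ldots,t\}$ of the $r$-partition are $\mathbbm{1}^{t},e_1,\ldots,e_{t-1}$. Hence the joint distribution of $(f(z_1),\ldots,f(z_{t-1}))$ is uniform on $\mathbb{F}_P^{t-1}$. Being a candidate means there exists $h$ with $f(z_j)=y_{j,h}$ for all $j$; the $j=t$ case follows automatically from the other $t-1$ because $\sum_j f(z_j)=f(\mathbbm{1}^{tr})=0$ while $\sum_j y_{j,h}=0$ by Lemma \ref{lemma-behrend} applied with $h_1=\cdots=h_t=h$. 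Since the $(y_{i,h})_h$ are distinct, the events for different $h$ are disjoint, giving $\Pr[\text{candidate}]=R/P^{t-1}$.

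Next, fix a competitor $(z_1',\ldots,z_t')\neq(z_1,\ldots,z_t)$ in $Z_1\times\cdots\times Z_t$ with $\sum z_j'=\mathbbm{1}^{tr}$ and $z_i'=z_i$ for some $i$, and let $I=\{j:z_j'\neq z_j\}$; the relation $\sum_{j\in I}(z_j'-z_j)=0$ forces $|I|\ge 2$, and the existence of $i\notin I$ forces $|I|\le t-1$. If both tuples are candidates, then $f(z_i)=f(z_i')$ forces the same index $h$ (again using distinctness of the $(y_{i,h})_h$), so the event reduces to: there exists $h$ with $f(z_j)=y_{j,h}$ for $j\le t-1$ and $f(z_j'-z_j)=0$ for $j\in I$. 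Using the sum relation on $I$, these amount to $t+|I|-2$ linear conditions on $f$; a restriction-to-first-block argument (analogous to the one used above) shows they are linearly independent, yielding $\Pr[\text{both candidates}]\le R/P^{t+|I|-2}$. For each change set $I$ of size $m$, the number of competitors is at most $(m!)^{r-1}$ (one permutation per non-first block, with the first block forced), so
\[
\Pr[\text{candidate, not isolated}]\;\le\;\frac{R}{P^{t-1}}\sum_{m=2}^{t-1}\binom{t}{m}\frac{(m!)^{r-1}}{P^{m-1}}.
\]
The calibration of $P$ in (\ref{eq-def-P}) is chosen precisely so that this sum is at most $1/2$: the tightest constraint is the $m=t-1$ term, which requires $P^{t-2}$ to dominate $(t-1)!^{r-1}$, matching the $(t-1)!^{(r-1)/(t-2)}$-factor in $P$; the smaller-$m$ terms are handled by the ample polynomial prefactor $t^{t+1}$. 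Subtracting from the candidate probability yields $\Pr[\text{isolated candidate}]\ge R/(2P^{t-1})$.

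The main technical obstacle is the linear-algebra claim that $\{z_j'\bmod V:j\in I\}$ spans an $(|I|-1)$-dimensional subspace of $\mathbb{F}_P^{tr}/V$, where $V=\spn(\mathbbm{1}^{tr},z_1,\ldots,z_{t-1})$. Dependencies beyond the trivial sum relation $\sum_{j\in I}(z_j'-z_j)=0$ can only arise when the permutations describing the modification $(z_1,\ldots,z_t)\mapsto(z_1',\ldots,z_t')$ on the $r-1$ non-first blocks act intransitively on $I$; in that case the competitor is a superposition of smaller (irreducible) competitors, which can be accommodated either by restricting the union bound to irreducible competitors or by grouping the sum by the orbit structure of the associated permutation action on $I$.
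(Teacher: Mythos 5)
Your overall strategy is the same as the paper's: compute the exact probability $R/P^{t-1}$ that $(z_1,\dots,z_t)$ is a candidate (this matches Claim~\ref{claim-probability-single-candidate} exactly), then union-bound the probability of having a competing candidate, using that the joint probability for a competitor whose span with the $z_i$ has dimension $d$ is $R/P^{d-1}$, and finally use the calibration of $P$ in (\ref{eq-def-P}) to make the resulting sum at most $R/(2P^{t-1})$. You organize the union bound differently: you group competitors by $m=|I|$ where $I=\{j:z_j'\neq z_j\}$ and propose to sum only over \emph{irreducible} competitors (those where $I$ forms a single orbit of the auxiliary permutation structure), whereas the paper counts all competitors, groups by $d$, and controls the count via the component structure of the auxiliary bipartite graph (Definition~\ref{def-auxiliary-bipartite-graph}, Claims~\ref{claim-number-connected-components} and~\ref{claim-number-z-prime-given-components}). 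Your irreducibility reduction is valid and in fact slightly tightens the accounting: if there is any competing candidate then there is an irreducible one (keep the modification on one orbit and revert the rest; the resulting tuple is still in $Z_1\times\dots\times Z_t$, sums to $\mathbbm{1}^{tr}$, agrees with $(z_1,\dots,z_t)$ outside that orbit, and is still a candidate since each of its entries lies in $Z_i'$), and for irreducible competitors the count $\binom{t}{m}(m!)^{r-1}$ replaces the paper's $t^t$ partition overcount. The verification that the final sum is $\le\tfrac{1}{2}$ under (\ref{eq-def-P}), using Claim~\ref{claim-factorials} to compare $m!$ with $(t-1)!^{(m-1)/(t-2)}$, goes through as you describe.

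The one genuine gap is exactly what you flag as the ``main technical obstacle'': you assert but do not prove that for an irreducible competitor the only linear dependency among $z_1,\dots,z_t,z_1',\dots,z_t'$ is the sum relation, so that $d=t+|I|-1$ and hence the $t+|I|-2$ linear conditions on $f$ are independent modulo $f(\mathbbm{1}^{tr})=0$. This is the content of Claim~\ref{claim-number-connected-components} in the paper, and the argument is short but not optional: given any relation $\sum_i a_i z_i - \sum_j a_j' z_j' = 0$, examining a single coordinate $s\in\{1,\dots,tr\}$ where exactly $z_i$ and $z_j'$ have a $1$-entry forces $a_i = a_j'$, so the coefficient vector is constant on components of $G(z_1,\dots,z_t,z_1',\dots,z_t')$; transitivity of the orbit structure on $I$ then gives precisely the one nontrivial relation. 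Without this step the claimed bound $\Pr[\text{both candidates}]\le R/P^{t+|I|-2}$ is unjustified, and (as you correctly observe) it would in fact be false for reducible competitors, so the reduction to irreducible ones and this dimension count need to appear together as actual arguments rather than as a remark about an obstacle.
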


Before proving this lemma, let us first show how it can be used to prove Theorem \ref{thm-fixed-t-f-stronger}.

\begin{proof}[Proof of Theorem \ref{thm-fixed-t-f-stronger} assuming Lemma \ref{lemma-probability-isolated-candidate}]
Let us consider the vertex set $\{1,\dots,tr\}$, partitioned into the $r$ parts $\{1,\dots,t\},\allowbreak  \{t+1,\dots, 2t\}, \dots, \{t(r-1)+1,\dots, tr\}$, each of size $t$. Let $\mathcal{H}$ be the the complete $r$-partite $r$-uniform hypergraph on the vertex set $\{1,\dots,tr\}$ with this $r$-partition.

Let us first treat the case $t=2$. Then $\mathcal{H}$ has $2^r$ edges. These edges can be divided into $N=2^r/2$ matchings of size $2$ (each consisting of an edge and its complement). These matchings satisfy the conditions in Theorem \ref{thm-fixed-t-f-stronger}, since any two disjoint edges indeed belong to the same matching and since $N=2^r/2 \ge 2^{r-\sqrt{r}}$. Thus, the statement in Theorem \ref{thm-fixed-t-f-stronger} for $t=2$ holds with $D_2=1$.

So let us from now on assume that $t\ge 3$, and consider Setting \ref{setting}. Recall from the above discussion that the edges of the hypergraph $\mathcal{H}$ correspond precisely to the vectors in $Z$ and that the matchings of size $t$ in $\mathcal{H}$ correspond precisely to the $t$-tuples $(z_1,\dots,z_t)\in Z_1\times\dots\times Z_t$ with $z_1+\dots+z_t=\mathbbm{1}^{tr}$.

Note that $|Z|=t^r$ and $|Z_1|=|Z_2|=\dots=|Z_t|=t^{r-1}$. Furthermore, the number of $t$-tuples $(z_1,\dots,z_t)\in Z_1\times\dots\times Z_t$ with $z_1+\dots+z_t=\mathbbm{1}^{tr}$ is precisely $t!^{r-1}$ (indeed, this is the total number of matchings $(e_1,\dots,e_t)$ in the hypergraph $\mathcal{H}$ such that $i\in e_i$ for $i=1,\dots,t$, since for each part the $r$-partition of the vertex set of $\mathcal H$ except the first part there are exactly $t!$ possibilities how the $t$ elements of the part are distributed between $e_1,\dots,e_t$).

By Lemma \ref{lemma-probability-isolated-candidate}, each of these $t!^{r-1}$ different $t$-tuples $(z_1,\dots,z_t)\in Z_1\times\dots\times Z_t$ with $z_1+\dots+z_t=\mathbbm{1}^{tr}$ is an isolated candidate $t$-tuple with probability at least $R/(2P^{t-1})$. Thus, the expected number of isolated candidate $t$-tuples $(z_1,\dots,z_t)\in Z_1\times\dots\times Z_t$ is at least
\[t!^{r-1}\cdot \frac{R}{2P^{t-1}}.\]

Note that we have $(t-1)!=2\dotsm (t-1)\le t^{t-2}$ and hence $P\le 4t^{t+1}\cdot (t-1)!^{(r-1)/(t-2)}\le t^{2t+1}\cdot t^{r-1}=t^{2t+r}$, so
\[R\ge P\cdot \exp\Big(-12\sqrt{\ln P\ln t}\Big)\ge P\cdot \exp\Big(-12\sqrt{2t+r} \cdot \ln t\Big)\ge \frac{P}{t^{24t+12\sqrt{r}}}.\]

Thus, choosing $D_t=t^2+26t+13$, the expected number of isolated candidate $t$-tuples in $Z_1\times\dots\times Z_t$ is at least
\[t!^{r-1}\cdot \frac{R}{2P^{t-1}}\ge t!^{r-1}\cdot \frac{1}{2P^{t-2}}\cdot \frac{1}{t^{24t+12\sqrt{r}}}\ge \frac{t!^{r-1}}{(t-1)!^{r-1}}\cdot \frac{1}{(4t^{t+1})^{t-2}\cdot 2t^{24t+12\sqrt{r}}}\ge t^{r-1}\cdot \frac{1}{t^{t^2+26t+12\sqrt{r}}}\ge t^{r-D_t\sqrt{r}},\]
where for the second inequality we used the upper bound for $P$ in (\ref{eq-def-P}). So let us fix an outcome of the random map $f$ (see Setting \ref{setting}) such that there are at least $t^{r-D_t\sqrt{r}}$ isolated candidate $t$-tuples.

Each isolated candidate $t$-tuple $(z_1,\dots,z_t)\in Z_1\times \dots\times Z_t$ satisfies $z_1+\dots+z_t=\mathbbm{1}^{tr}$, and so it corresponds to a matching of size $t$ in $\mathcal{H}$. Let $M_1,\dots,M_N$ be the resulting matchings of size $t$ in the hypergraph $\mathcal{H}$, where $N\ge t^{r-D_t\sqrt{r}}$ is the number of isolated candidate $t$-tuples. It remains to show that the condition at the end of the statement of Theorem  \ref{thm-fixed-t-f-stronger} holds. This is the content of the following claim.

\begin{claim}\label{claim-isolated-candidate-matchings}
For any pairwise disjoint edges $e_1\in M_{j_1},\dots,e_t\in M_{j_t}$ (for some indices $j_1,\dots,j_t\in \{1,\dots,N\}$), we must have $j_1=\dots=j_t$
\end{claim}
\begin{proof}
Since the edges $e_1,\dots,e_t$ are pairwise disjoint and each of them contains exactly one vertex from the first part $\{1,\dots,t\}$ of the $r$-partition of $\mathcal{H}$, upon relabeling we may assume that for $i=1,\dots,t$ we have $i\in e_i$. Let $z_1',\dots,z_t'\in Z\su \{0,1\}^{tr}$ be the vectors corresponding to $e_1,\dots,e_t$, then we have $z_i'\in Z_i$ for $i=1,\dots,t$. In other words, we have $(z_1',\dots,z_t')\in Z_1\times \dots,\times Z_t$. Since $e_1,\dots,e_t$ form a matching, we furthermore have $z_1'+\dots+z_t'=\mathbbm{1}^{tr}$.

We claim that $f(z_i')\in Y_i$ for $i=1,\dots,t$. Indeed, consider some $i\in \{1,\dots, t\}$ and recall that $z_i'$ corresponds to the edge $e_i\in M_{j_i}$. The matching $M_{j_i}$ corresponds to some isolated candidate $t$-tuple $(z_1,\dots,z_t)\in Z_1\times \dots\times Z_t$, and, since $e_i$ is an edge of $M_{j_i}$, one of the vectors $z_1,\dots,z_t$ must be equal to $z_i'$. Because $z_i'\in Z_i$ (and $Z_1,\dots,Z_t$ are pairwise disjoint), we must have $z_i=z_i'$. So we can conclude that $f(z_i')=f(z_i)\in Y_i$ (as $(z_1,z_2,\dots,z_t)$ is a candidate $t$-tuple).

Now we established that $(z_1',\dots,z_t')\in Z_1\times \dots,\times Z_t$ satisfies $z_1'+\dots+z_t'=\mathbbm{1}^{tr}$ and $f(z_i')\in Y_i$ for $i=1,\dots,t$. This means that $(z_1',\dots,z_t')$ is a candidate $t$-tuple.

We claim that for each $i \in \{1,\dots,t\}$ the isolated candidate $t$-tuple corresponding to the matching $M_{j_i}$ must be precisely $(z_1',\dots,z_t')$. Indeed, consider some $i\in \{1,\dots, t\}$  and let $(z_1,\dots,z_t)\in Z_1\times \dots\times Z_t$ be the isolated candidate $t$-tuple corresponding to $M_{j_i}$. Recalling that $e_i$ is one of the edges of $M_{j_i}$ and that $e_i$ corresponds to the vector $z_i'$, we can see that $z_i'=z_i$. If we had $(z_1,\dots,z_t)\neq (z_1',\dots,z_t')$, then the candidate $t$-tuple $(z_1,\dots,z_t)$ would not be isolated (since $(z_1',\dots,z_t')$ is a candidate $t$-tuple satisfying $z_i'=z_i$). Hence we must have $(z_1,\dots,z_t)= (z_1',\dots,z_t')$, and the isolated candidate $t$-tuple corresponding to the matching $M_{j_i}$ is indeed $(z_1',\dots,z_t')$.

We proved that all of the matchings $M_{j_1},\dots,M_{j_t}$ correspond to the same isolated candidate $t$-tuple, namely to $(z_1',\dots,z_t')$. This means that the matchings $M_{j_1},\dots,M_{j_t}$ are all equal and we have $j_1=\dots=j_t$.
\end{proof}

This finishes the proof of Theorem \ref{thm-fixed-t-f-stronger}.
\end{proof}

It remains to prove Lemma \ref{lemma-probability-isolated-candidate}, which is the aim of the remainder of this subsection. We adopt the definitions and conventions in Setting \ref{setting} throughout the rest of this subsection. 

We prepare the proof of Lemma \ref{lemma-probability-isolated-candidate} with a series of claims. 

\begin{claim}\label{claim-linear-map-randomness}
Suppose $x_1,\dots,x_{\ell}\in \mathbb{F}_P^{tr}$ are vectors such that $x_1,\dots,x_{\ell}$ and $\mathbbm{1}^{tr}$ are linearly independent over $\mathbb{F}_P$. Then for the random linear map $f:\mathbb{F}_P^{tr}\to\mathbb{F}_P$ as above (with the condition that $f(\mathbbm{1}^{tr})=0$), the images $f(x_1),\dots,f(x_{\ell})$ are independent uniformly random elements of $\mathbb{F}_P$.
\end{claim}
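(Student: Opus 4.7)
The plan is to identify the space of admissible linear maps with the dual of an appropriate quotient, and then to view $f \mapsto (f(x_1),\dots,f(x_\ell))$ as a surjective linear map between finite-dimensional $\mathbb{F}_P$-vector spaces, so that the preimage of every point has the same size.

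More concretely, I would first observe that the set of $\mathbb{F}_P$-linear maps $f:\mathbb{F}_P^{tr}\to \mathbb{F}_P$ with $f(\mathbbm{1}^{tr})=0$ is precisely the dual space $(\mathbb{F}_P^{tr}/\langle \mathbbm{1}^{tr}\rangle)^*$, which is an $\mathbb{F}_P$-vector space of dimension $tr-1$, and the uniform distribution on this set is the uniform distribution on this vector space. Since $x_1,\dots,x_\ell,\mathbbm{1}^{tr}$ are linearly independent in $\mathbb{F}_P^{tr}$, the images $\overline{x}_1,\dots,\overline{x}_\ell$ in the quotient $\mathbb{F}_P^{tr}/\langle \mathbbm{1}^{tr}\rangle$ are linearly independent; in particular $\ell\le tr-1$.

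Next, I would consider the evaluation map
\[\Phi: (\mathbb{F}_P^{tr}/\langle \mathbbm{1}^{tr}\rangle)^* \longrightarrow \mathbb{F}_P^\ell, \qquad f \longmapsto \bigl(f(x_1),\dots,f(x_\ell)\bigr),\]
which is $\mathbb{F}_P$-linear. To check surjectivity, extend $\overline{x}_1,\dots,\overline{x}_\ell$ to a basis of $\mathbb{F}_P^{tr}/\langle \mathbbm{1}^{tr}\rangle$; for any target vector $(a_1,\dots,a_\ell)\in \mathbb{F}_P^\ell$, define $f$ on this basis by sending $\overline{x}_i\mapsto a_i$ for $i\le \ell$ and the remaining basis vectors to arbitrary values. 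This yields an $f$ with $\Phi(f)=(a_1,\dots,a_\ell)$, proving surjectivity.

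Finally, since $\Phi$ is a surjective $\mathbb{F}_P$-linear map between finite spaces, every fiber $\Phi^{-1}(a_1,\dots,a_\ell)$ has the same cardinality $P^{tr-1-\ell}$. Hence, when $f$ is drawn uniformly from its domain, $\Phi(f)$ is uniform on $\mathbb{F}_P^\ell$, which is exactly the statement that $f(x_1),\dots,f(x_\ell)$ are independent and uniform in $\mathbb{F}_P$. There is no real obstacle here: the only thing to be careful about is correctly handling the linear constraint $f(\mathbbm{1}^{tr})=0$ by passing to the quotient, which is precisely what forces the linear independence hypothesis to include $\mathbbm{1}^{tr}$.
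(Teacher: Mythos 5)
Your proof is correct and is essentially the same argument as the paper's: both proofs extend $x_1,\dots,x_\ell$ (together with $\mathbbm{1}^{tr}$) to a basis and observe that a uniformly random admissible $f$ assigns independent uniform values to the basis vectors other than $\mathbbm{1}^{tr}$. You merely repackage this via the quotient space, the dual, and the equal-fiber argument for a surjective linear map, whereas the paper states the same fact directly.
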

\begin{proof}
We can extend $x_1,\dots,x_{\ell},\mathbbm{1}^{tr}$ to some basis of $\mathbb{F}_P^{tr}$, and model the choice of the linear map $f:\mathbb{F}_P^{tr}\to\mathbb{F}_P$ by choosing independent uniformly random images in $\mathbb{F}_P$ for all basis vectors except $\mathbbm{1}^{tr}$ (for $\mathbbm{1}^{tr}$, we have to choose $f(\mathbbm{1}^{tr})=0$). With this description for the map $f$, the claim is immediate.
\end{proof}

Our next claim gives a helpful characterization of candidate $t$-tuples in terms of the $t$-tuples $(y_{1,h},y_{2,h},\dots,y_{t,h})\in \mathbb{F}_P\times \dots\times \mathbb{F}_P$ from Lemma \ref{lemma-behrend}.

\begin{claim}\label{claim-candidate-characterization}
A $t$-tuple $(z_1,\dots,z_t)\in Z_1\times\dots\times Z_t$ satisfying $z_1+\dots+z_t=\mathbbm{1}^{tr}$ is a candidate $t$-tuple if and only if there is some $h\in \{1,\dots,R\}$ such that $(f(z_1),\dots,f(z_t))=(y_{1,h},y_{2,h},\dots,y_{t,h})$.
\end{claim}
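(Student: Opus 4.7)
The plan is to unpack the definitions and use linearity together with Lemma \ref{lemma-behrend}. Recall that $(z_1,\dots,z_t)$ being a candidate $t$-tuple means (in addition to the assumed condition $z_1+\dots+z_t=\mathbbm{1}^{tr}$) that $z_i \in Z_i'$ for each $i$, which by definition of $Z_i'$ and $Y_i$ is equivalent to saying $f(z_i) \in \{y_{i,1},\dots,y_{i,R}\}$. So for each $i$ there is an index $h_i \in \{1,\dots,R\}$ with $f(z_i) = y_{i,h_i}$, and the nontrivial content of the claim is exactly the statement that these indices $h_1,\dots,h_t$ must all coincide.

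For the forward direction, I would apply the linear map $f$ to the identity $z_1+\dots+z_t=\mathbbm{1}^{tr}$. Using $\mathbb{F}_P$-linearity of $f$ and the fact that $f$ was constructed to satisfy $f(\mathbbm{1}^{tr})=0$, this yields $f(z_1)+\dots+f(z_t) = 0$ in $\mathbb{F}_P$. Substituting $f(z_i)=y_{i,h_i}$ gives
\[ y_{1,h_1}+y_{2,h_2}+\dots+y_{t,h_t} = 0 \quad \text{in } \mathbb{F}_P. \]
By the defining property of the collection in Lemma \ref{lemma-behrend}, this forces $h_1=h_2=\dots=h_t$, so setting $h$ to this common value gives $(f(z_1),\dots,f(z_t))=(y_{1,h},\dots,y_{t,h})$ as desired.

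For the reverse direction, the argument is essentially just reading the definitions. If $(f(z_1),\dots,f(z_t))=(y_{1,h},\dots,y_{t,h})$ for some $h \in \{1,\dots,R\}$, then $f(z_i) = y_{i,h} \in Y_i$ for each $i=1,\dots,t$, so $z_i \in Z_i'$ by definition of $Z_i'$. Combined with the hypothesis $z_1+\dots+z_t=\mathbbm{1}^{tr}$ and $z_i \in Z_i$, this is exactly the definition of a candidate $t$-tuple.

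There is no real obstacle here: the claim is a direct consequence of linearity of $f$, the normalization $f(\mathbbm{1}^{tr})=0$, and the Behrend-type property of the $(y_{i,h})$ from Lemma \ref{lemma-behrend}. The claim is set up precisely so that the sum-free structure of the $y_{i,h}$'s gets transported, via $f$, into an ``isolation'' property of candidate $t$-tuples, which is what will drive Lemma \ref{lemma-probability-isolated-candidate} afterwards.
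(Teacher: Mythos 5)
Your proof is correct and follows essentially the same route as the paper: both directions reduce to the definitions of $Z_i'$ and $Y_i$, and the key step is applying $f$ to $z_1+\dots+z_t=\mathbbm{1}^{tr}$, using $f(\mathbbm{1}^{tr})=0$ and linearity to obtain $y_{1,h_1}+\dots+y_{t,h_t}=0$, after which Lemma~\ref{lemma-behrend} forces $h_1=\dots=h_t$. The only superficial difference is which implication you label ``forward'' versus ``reverse''; the substance is identical.
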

\begin{proof}
Since we already assumed that $(z_1,\dots,z_t)\in Z_1\times\dots\times Z_t$ and $z_1+\dots+z_t=\mathbbm{1}^{tr}$, the $t$-tuple $(z_1,\dots,z_t)$ is a candidate $t$-tuple if and only if we have $f(z_i)\in Y_i$ for $i=1,\dots,t$.

If $(f(z_1),\dots,f(z_t))=(y_{1,h},y_{2,h},\dots,y_{t,h})$ for some $h\in \{1,\dots,R\}$, then we indeed have $f(z_i)=y_{i,h}\in Y_i$ for all $i=1,\dots,t$  (recalling the definition of $Y_i$ in Setting \ref{setting}).

For the reverse direction, assume that $f(z_i)\in Y_i$ for $i=1,\dots,t$. This means that for each $i \in \{1,\dots,t\}$, there is an index $h_i\in \{1,\dots,R\}$ such that $f(z_i)=y_{i,h_i}$. We now have $y_{1,h_1}+\dots+y_{t,h_t}=f(z_1)+\dots+f(z_t)=f(z_1+\dots+z_t)=f(\mathbbm{1}^{tr})=0$ in $\mathbb{F}_P$. By the condition in Lemma \ref{lemma-behrend} this means that $h_1=\dots=h_t$. So there exists some $h\in \{1,\dots,R\}$ such that $f(z_i)=y_{i,h}$ for $h=1,\dots,t$, meaning that $(f(z_1),\dots,f(z_t))=(y_{1,h},y_{2,h},\dots,y_{t,h})$.
\end{proof}

By combining the two previous claims, we can determine the probability that a given $t$-tuple $(z_1,\dots,z_t)\in Z_1\times\dots\times Z_t$ with $z_1+\dots+z_t=\mathbbm{1}^{tr}$ is a candidate $t$-tuple.

\begin{claim}\label{claim-probability-single-candidate}
Suppose that $(z_1,\dots,z_t)\in Z_1\times\dots\times Z_t$ satisfies $z_1+\dots+z_t=\mathbbm{1}^{tr}$. Then the probability that $(z_1,\dots,z_t)$ is a candidate $t$-tuple is precisely $R/P^{t-1}$.
\end{claim}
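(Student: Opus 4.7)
The plan is to combine Claim \ref{claim-candidate-characterization} with Claim \ref{claim-linear-map-randomness} and then do a direct counting argument. By Claim \ref{claim-candidate-characterization}, the $t$-tuple $(z_1,\dots,z_t)$ is a candidate $t$-tuple precisely when the random vector $(f(z_1),\dots,f(z_t))\in\mathbb{F}_P^t$ coincides with one of the $R$ specific tuples $(y_{1,h},\dots,y_{t,h})$ for some $h\in\{1,\dots,R\}$. So the plan reduces to (a) determining the distribution of $(f(z_1),\dots,f(z_t))$ and (b) counting how many of the target tuples it can hit.

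For (a), I will show that $(f(z_1),\dots,f(z_t))$ is uniformly distributed over the hyperplane
\[H=\{(a_1,\dots,a_t)\in\mathbb{F}_P^t : a_1+\dots+a_t=0\},\]
which has size $P^{t-1}$. Membership in $H$ is immediate from $z_1+\dots+z_t=\mathbbm{1}^{tr}$ together with the condition $f(\mathbbm{1}^{tr})=0$. For the uniformity, the key observation is that the $\{0,1\}$-vectors $z_1,\dots,z_t$ have pairwise disjoint supports (since they sum to the all-ones vector and each has exactly $r$ ones). From this, $z_1,\dots,z_{t-1}$ together with $\mathbbm{1}^{tr}$ are linearly independent over $\mathbb{F}_P$: they are pairwise independent because their supports are disjoint and nonempty, and no $\mathbb{F}_P$-combination of $z_1,\dots,z_{t-1}$ can equal $\mathbbm{1}^{tr}$ since any such combination vanishes on the (nonempty) support of $z_t$. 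Claim \ref{claim-linear-map-randomness} then gives that $f(z_1),\dots,f(z_{t-1})$ are independent uniform elements of $\mathbb{F}_P$, and since $f(z_t)=-f(z_1)-\dots-f(z_{t-1})$ is determined by these, the joint distribution is uniform on $H$.

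For (b), I will show that the $R$ target tuples $(y_{1,h},\dots,y_{t,h})$ are $R$ distinct points of $H$. They lie in $H$ because applying Lemma \ref{lemma-behrend} with $h_1=\dots=h_t=h$ gives $y_{1,h}+\dots+y_{t,h}=0$; they are distinct because, as noted right after Lemma \ref{lemma-behrend}, for each fixed $i$ the vectors $y_{i,1},\dots,y_{i,R}$ are distinct. Combining with (a), the probability that $(f(z_1),\dots,f(z_t))$ equals one of these $R$ distinct points of $H$ is exactly $R/P^{t-1}$, which is the claimed value. I do not anticipate a significant obstacle; the only place that needs care is the linear independence step justifying the use of Claim \ref{claim-linear-map-randomness}, and the disjoint-supports observation handles it cleanly.
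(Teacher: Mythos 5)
Your proposal is correct and follows essentially the same route as the paper: reduce via Claim \ref{claim-candidate-characterization} to the distribution of $(f(z_1),\dots,f(z_t))$, establish that $z_1,\dots,z_{t-1},\mathbbm{1}^{tr}$ are linearly independent so that Claim \ref{claim-linear-map-randomness} applies, and conclude the probability is $R/P^{t-1}$ (the paper phrases this as a sum over $h$ of $1/P^{t-1}$ rather than as uniformity on the sum-zero hyperplane, and proves linear independence by restricting to the first $t$ coordinates rather than via disjoint supports, but these are equivalent). One cosmetic caution: the phrase ``pairwise independent'' is not what you need --- what your disjoint-supports observation actually gives, and what you should say, is full linear independence of $z_1,\dots,z_{t-1}$, since each has a coordinate where all the others vanish.
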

\begin{proof}

First, note that the vectors $z_1,\dots,z_{t-1},z_t\in \mathbb{F}_P^{tr}$, are linearly independent. Indeed, when restricting the index set to $\{1,\dots,t\}$, these vectors are the standard basis vectors in $\mathbb{F}_P^{t}$. Since $z_1+\dots+z_t=\mathbbm{1}^{tr}$, this implies that the vectors $z_1,\dots,z_{t-1}$ and $\mathbbm{1}^{tr}$ are linearly independent over $\mathbb{F}_P$. Hence, by Claim \ref{claim-linear-map-randomness}, the images $f(z_1),\dots,f(z_{t-1})$ are independent uniformly random elements of $\mathbb{F}_P$.

By Claim \ref{claim-candidate-characterization} $(z_1,\dots,z_t)$ is a candidate $t$-tuple if and only if we have $(f(z_1),\dots,f(z_t))=(y_{1,h},y_{2,h},\dots,y_{t,h})$ for some $h\in \{1,\dots,R\}$.

We claim that for any $h\in \{1,\dots,R\}$, having $(f(z_1),\dots,f(z_t))=(y_{1,h},y_{2,h},\dots,y_{t,h})$ is equivalent to having $(f(z_1),\dots,f(z_{t-1}))=(y_{1,h},\dots,y_{t-1,h})$.
Indeed, if $(f(z_1),\dots,f(z_{t-1}))=(y_{1,h},\dots,y_{t-1,h})$ for some $h\in \{1,\dots,R\}$, then (using that $y_{1,h}+\dots+y_{t,h}=0$ by the condition in Lemma \ref{lemma-behrend}) we also have
\[f(z_t)=f(\mathbbm{1}^{tr}-z_1-\dots-z_{t-1})=f(\mathbbm{1}^{tr})-f(z_1)-\dots-f(z_{t-1})=0-y_{1,h}-\dots-y_{t-1,h}=y_{t,h},\]
and hence $(f(z_1),\dots,f(z_t))=(y_{1,h},y_{2,h},\dots,y_{t,h})$.

Recalling that the $t$-tuples $(y_{1,h},\dots,y_{t,h})$ for $h\in \{1,\dots,R\}$ are distinct (see the condition in Lemma \ref{lemma-behrend}), we can conclude that
\begin{align*}
\Pr[(z_1,\dots,z_t)\text{ is a candidate $t$-tuple}]&=\sum_{h=1}^{R}\Pr[(f(z_1),\dots,f(z_t))=(y_{1,h},\dots,y_{t,h})]\\
&=\sum_{h=1}^{R}\Pr[(f(z_1),\dots,f(z_{t-1}))=(y_{1,h},\dots,y_{t-1,h})]\\
&=\sum_{h=1}^{R}\Pr[f(z_1)=y_{1,h}]\dotsm \Pr[f(z_{t-1})=y_{t-1,h}]\\
&=\sum_{h=1}^{R} 1/P^{t-1}=R/P^{t-1},
\end{align*}
where in the third step we used that the images $f(z_1),\dots,f(z_{t-1})$ are probabilistically independent, and in the fourth step we used that these images are uniformly distributed in $\mathbb{F}_P$.
\end{proof}

In order to deduce Lemma \ref{lemma-probability-isolated-candidate} from Claim \ref{claim-probability-single-candidate} we need to give an upper bound for the probability that $(z_1,\dots,z_t)$ is a candidate $t$-tuple but not isolated. This happens precisely when there is another candidate $t$-tuple $(z_1',\dots,z_t')$ with $z_i'=z_i$ for some $i\in \{1,\dots,t\}$. The next claim bounds the probability that this happens for any given $(z_1,\dots,z_t)$ and $(z_1',\dots,z_t')$.

\begin{claim}\label{claim-prob-two-tuples-candidates}
Suppose that $(z_1,\dots,z_t)$ and $(z_1',\dots,z_t')$ are $t$-tuples in $Z_1\times\dots\times Z_t$ satisfying $z_1+\dots+z_t=\mathbbm{1}^{tr}$ and $z_1'+\dots+z_t'=\mathbbm{1}^{tr}$, such that $z_i=z_i'$ for some $i\in \{1,\dots,t\}$. Let $d=\dim \spn_{\mathbb{F}_P}(z_1,\dots,z_t,z_1',\dots,z_t')$. Then the probability that $(z_1,\dots,z_t)$ and $(z_1',\dots,z_t')$ are both candidate $t$-tuples is at most $R/P^{d-1}$.
\end{claim}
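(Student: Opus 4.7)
The plan is to mirror the strategy of Claim \ref{claim-probability-single-candidate}, first reducing the two-tuple candidate event to a single index $h$, and then bounding each per-$h$ probability using a dimension argument.

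First, apply Claim \ref{claim-candidate-characterization} to both tuples: if $(z_1,\dots,z_t)$ and $(z_1',\dots,z_t')$ are both candidate $t$-tuples, there exist $h,h'\in\{1,\dots,R\}$ with $(f(z_1),\dots,f(z_t))=(y_{1,h},\dots,y_{t,h})$ and $(f(z_1'),\dots,f(z_t'))=(y_{1,h'},\dots,y_{t,h'})$. Fix an index $i$ such that $z_i=z_i'$; then $y_{i,h}=f(z_i)=f(z_i')=y_{i,h'}$. By the remark immediately following Lemma \ref{lemma-behrend}, the vectors $y_{i,1},\dots,y_{i,R}$ are pairwise distinct, so $h=h'$. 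Hence the event that both tuples are candidates decomposes as a disjoint union over $h\in\{1,\dots,R\}$ of the events $E_h$ that $(f(z_j),f(z_j'))=(y_{j,h},y_{j,h})$ for all $j=1,\dots,t$.

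Next I bound $\Pr[E_h]$ for a fixed $h$. Since $z_1+\dots+z_t=\mathbbm{1}^{tr}$, the vector $\mathbbm{1}^{tr}$ lies in $W:=\spn_{\mathbb{F}_P}(z_1,\dots,z_t,z_1',\dots,z_t')$, which has dimension $d$ by hypothesis. Hence one can extend $\{\mathbbm{1}^{tr}\}$ to a basis $\{\mathbbm{1}^{tr},v_1,\dots,v_{d-1}\}$ of $W$ using $d-1$ vectors $v_1,\dots,v_{d-1}$ chosen from $\{z_1,\dots,z_t,z_1',\dots,z_t'\}$. By Claim \ref{claim-linear-map-randomness}, the images $f(v_1),\dots,f(v_{d-1})$ are independent uniformly random elements of $\mathbb{F}_P$, so specifying all of their values to the prescribed $y$-values happens with probability exactly $1/P^{d-1}$. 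All the remaining equalities required by $E_h$ are linear consequences of $f(\mathbbm{1}^{tr})=0$ and the specified values of $f(v_1),\dots,f(v_{d-1})$, so they are either automatically implied (in which case $\Pr[E_h]=1/P^{d-1}$) or incompatible (in which case $\Pr[E_h]=0$). In either case $\Pr[E_h]\le 1/P^{d-1}$.

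Finally, a union bound over $h=1,\dots,R$ yields
\[
\Pr[(z_1,\dots,z_t)\text{ and }(z_1',\dots,z_t')\text{ are both candidate $t$-tuples}] = \sum_{h=1}^R \Pr[E_h] \le \frac{R}{P^{d-1}},
\]
which is the desired bound. The one subtle point, which I expect to be the main thing to get right, is the reduction to a single index $h$; this rests on the observation that $z_i=z_i'$ for some coordinate $i$ forces $y_{i,h}=y_{i,h'}$ and hence $h=h'$ by the distinctness guarantee built into Lemma \ref{lemma-behrend}. The dimension argument in the second step is then a straightforward application of Claim \ref{claim-linear-map-randomness} once a basis through $\mathbbm{1}^{tr}$ has been extracted.
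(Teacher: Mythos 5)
Your proof is correct and follows essentially the same route as the paper's: reduce to a single index $h$ via the distinctness of $y_{i,1},\dots,y_{i,R}$ and the coordinate with $z_i=z_i'$, then choose $d-1$ of the vectors $z_1,\dots,z_t,z_1',\dots,z_t'$ completing $\mathbbm{1}^{tr}$ to a basis of their span and apply Claim \ref{claim-linear-map-randomness} to get the per-$h$ bound $1/P^{d-1}$, finishing with a union bound over $h$. No gaps.
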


Here, $\spn_{\mathbb{F}_P}(z_1,\dots,z_t,z_1',\dots,z_t')$ denotes the span of $z_1,\dots,z_t,z_1',\dots,z_t'$, interpreted as vectors in $\mathbb{F}_P^{tr}$ (so, $d$ is the dimension of this span). We remark that in the above claim the probability that $(z_1,\dots,z_t)$ and $(z_1',\dots,z_t')$ are both candidate $t$-tuples is actually equal to $R/P^{d-1}$, but we will only need the upper bound for this probability as stated in the claim.

\begin{proof}[Proof of Claim \ref{claim-prob-two-tuples-candidates}]
If $(z_1,\dots,z_t)$ and $(z_1',\dots,z_t')$ are candidate $t$-tuples, then by Claim \ref{claim-candidate-characterization} there are $h,h'\in \{1,\dots,R\}$ such that $(f(z_1),\dots,f(z_t))=(y_{1,h},y_{2,h},\dots,y_{t,h})$ and $(f(z_1'),\dots,f(z_t'))=(y_{1,h'},y_{2,h'},\dots,y_{t,h'})$. However, recall that for some $i\in \{1,\dots,t\}$ we have $z_i=z_i'$ and therefore $y_{i,h}=f(z_i)=f(z_i')=y_{i,h'}$. Hence, since $y_{i,1},\dots,y_{i,R}$ are distinct (see the comment below Lemma \ref{lemma-behrend}), we must have $h=h'$.

Thus, if $(z_1,\dots,z_t)$ and $(z_1',\dots,z_t')$ are candidate $t$-tuples, then there must be some $h\in \{1,\dots,R\}$ such that $f(z_i)=f(z_i')=y_{i,h}$ for all $i=1,\dots,t$. So it suffices to show that for every $h\in \{1,\dots,R\}$ with probability at most $1/P^{d-1}$ we have $f(z_i)=f(z_i')=y_{i,h}$ for all $i=1,\dots,t$. So let us fix some $h\in \{1,\dots,R\}$.

Since the span of the vectors $z_1,\dots,z_t,z_1',\dots,z_t'$ in $\mathbb{F}_P^{tr}$ has dimension $d$, we can choose $d-1$ vectors among $z_1,\dots,z_t,z_1',\dots,z_t'$ such that these $d-1$ vectors together with $\mathbbm{1}^{tr}$ are linearly independent. Then by Claim \ref{claim-linear-map-randomness} the images of these $d-1$ vectors under the map $f$ are independent uniformly random elements of $\mathbb{F}_P$. Thus, the probability of having the desired images $f(z_i)=y_{i,h}$ or $f(z_i')=y_{i,h}$ for all of these $d-1$ elements is $1/P^{d-1}$. Hence the probability of having  $f(z_i)=f(z_i')=y_{i,h}$ for all $i=1,\dots,t$ is at most $1/P^{d-1}$, as desired.
\end{proof}

Our next claim gives some bounds for the dimension $d=\dim \spn_{\mathbb{F}_P}(z_1,\dots,z_t,z_1',\dots,z_t')$ occurring in Claim \ref{claim-prob-two-tuples-candidates}.

\begin{claim}\label{claim-values-of-d}
Consider $t$-tuples $(z_1,\dots,z_t),(z_1',\dots,z_t')\in Z_1\times\dots\times Z_t$ with $z_1+\dots+z_t=\mathbbm{1}^{tr}$ and $z_1'+\dots+z_t'=\mathbbm{1}^{tr}$, and let $d=\dim \spn_{\mathbb{F}_P}(z_1,\dots,z_t,z_1',\dots,z_t')$. If $(z_1,\dots,z_t)\neq (z_1',\dots,z_t')$, then we have $t+1\le d\le 2t-1$
\end{claim}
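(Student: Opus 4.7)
The plan is to handle the two bounds essentially independently, since both are short linear algebra observations using the structure of the sets $Z_1, \ldots, Z_t$.

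For the upper bound $d \le 2t-1$, I would observe that the hypotheses $z_1+\dots+z_t = \mathbbm{1}^{tr} = z_1'+\dots+z_t'$ immediately give the relation
\[(z_1 - z_1') + (z_2 - z_2') + \dots + (z_t - z_t') = 0\]
in $\mathbb{F}_P^{tr}$. Viewing this as a linear combination of the $2t$ vectors $z_1,\dots,z_t,z_1',\dots,z_t'$ with coefficients $(1,\dots,1,-1,\dots,-1)$, this is a nontrivial relation (its coefficient vector is nonzero), so the span of these $2t$ vectors has dimension at most $2t-1$.

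For the lower bound $d \ge t+1$, I would first argue that $z_1,\dots,z_t$ are already linearly independent over $\mathbb{F}_P$. Indeed, each $z_i$ lies in $Z_i$, so when restricted to the first $t$ coordinates (the first part of the $r$-partition), $z_i$ equals the standard basis vector $e_i \in \mathbb{F}_P^t$. Hence $\dim \spn_{\mathbb{F}_P}(z_1,\dots,z_t) = t$. The key step is then to show that not all of $z_1',\dots,z_t'$ can lie in this $t$-dimensional span. Suppose for contradiction that they did. Then for each $i$ we could write $z_i' = a_{i,1} z_1 + \dots + a_{i,t} z_t$ for some coefficients $a_{i,j} \in \mathbb{F}_P$. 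Restricting this equation to the first $t$ coordinates, the left-hand side becomes $e_i$ (since $z_i' \in Z_i$), while the right-hand side becomes $(a_{i,1}, \dots, a_{i,t})$. Comparing gives $a_{i,i} = 1$ and $a_{i,j} = 0$ for $j \ne i$, whence $z_i' = z_i$. Doing this for every $i$ yields $(z_1',\dots,z_t') = (z_1,\dots,z_t)$, contradicting the assumption. So at least one $z_i'$ lies outside $\spn_{\mathbb{F}_P}(z_1,\dots,z_t)$, which bumps the dimension up to at least $t+1$.

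Neither direction looks like a real obstacle; the only thing to be slightly careful about is making sure the upper-bound relation is genuinely nontrivial (which it is, because the coefficient vector is nonzero regardless of whether individual $z_i$ happen to equal individual $z_i'$). The rest is just using that restricting to the first $t$ coordinates turns the $z_i$'s (and $z_i'$'s) into the standard basis.
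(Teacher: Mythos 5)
Your proposal is correct and follows essentially the same route as the paper: the upper bound comes from the single nontrivial relation $\sum_i z_i - \sum_i z_i' = 0$, and the lower bound from observing that $z_1,\dots,z_t$ restrict to the standard basis on the first $t$ coordinates and then using that restriction to force $z_i'=z_i$ whenever $z_i'\in\spn(z_1,\dots,z_t)$. The paper phrases the lower bound slightly more directly (fix an index $i$ with $z_i'\neq z_i$ and show that particular $z_i'$ lies outside the span), but this is logically the same argument as your contrapositive version.
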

\begin{proof}
The upper bound $d=\dim \spn_{\mathbb{F}_P}(z_1,\dots,z_t,z_1',\dots,z_t')\le 2t-1$ follows from the fact that we have $z_1+\dots+z_t=\mathbbm{1}^{tr}=z_1'+\dots+z_t'$, so $z_1,\dots,z_t,z_1',\dots,z_t'$ cannot be linearly independent over $\mathbb{F}_P$.

For the lower bound $d\ge t+1$, recall when restricting the vectors $z_1,\dots,z_t$ to the first $t$ coordinates (i.e.\ when restricting the index set to $\{1,\dots,t\}$), these vectors become the standard basis vectors (this follows from the definition of the sets $Z_1,\dots,Z_t$ and the fact that $z_1+\dots+z_t=\mathbbm{1}^{tr}$). In particular, the vectors $z_1,\dots,z_t$ are linearly independent, and we have $\dim \spn_{\mathbb{F}_P}(z_1,\dots,z_t)= t$.

Furthermore, since $(z_1,\dots,z_t)\neq (z_1',\dots,z_t')$, there must be an index $i$ such that $z_i'\neq z_i$.  Suppose we could express $z_i'$ as a linear combination of $z_1,\dots,z_t$. Looking at the first $t$ coordinates, the vector $z_i'\in Z_i$ also restricts to the $i$-th standard basis vector, so this linear combination would need to be $z_i'=0\cdot z_1+\dots+0\cdot z_{i-1}+1\cdot z_i+0\cdot z_{i+1}+\dots+0\cdot z_t$. But this means that $z_i=z_i'$, which is a contradiction. Hence $z_i'$ cannot be expressed as a linear combination of $z_1,\dots,z_t$, which implies that $d\ge \dim \spn_{\mathbb{F}_P}(z_1,\dots,z_t,z_i')\ge \dim \spn_{\mathbb{F}_P}(z_1,\dots,z_t)+1= t+1$.
\end{proof}

In order to bound the probability that a given $t$-tuple $(z_1,\dots,z_t)$ is a candidate $t$-tuple but not isolated, we need to sum up the probabilities in Claim \ref{claim-prob-two-tuples-candidates} for all the relevant $t$-tuples $(z_1',\dots,z_t')$ (i.e.\ all $t$-tuples $(z_1',\dots,z_t')\in Z_1\times\dots\times Z_t$ with $z_1'+\dots+z_t'=\mathbbm{1}^{tr}$ and $z_i=z_i'$ for some $i\in \{1,\dots,t\}$). Recall that the relevant probabilities in the sum depend on $d=\dim \spn_{\mathbb{F}_P}(z_1,\dots,z_t,z_1',\dots,z_t')$. So, given $(z_1,\dots,z_t)$, we need to bound the number of such $t$-tuples $(z_1',\dots,z_t')$ for any given value of $d=\dim \spn_{\mathbb{F}_P}(z_1,\dots,z_t,z_1',\dots,z_t')$. Such a bound is stated in the following claim, which is the last missing ingredient in order to prove Lemma \ref{lemma-probability-isolated-candidate}.

\begin{claim}\label{claim-bound-possibilities-z-prime}
Consider a $t$-tuple $(z_1,\dots,z_t)\in Z_1\times\dots\times Z_t$ with $z_1+\dots+z_t=\mathbbm{1}^{tr}$ and an integer $d$ with $t+1\le d\le 2t-1$. Then the number of $t$-tuples $(z_1',\dots,z_t')\in Z_1\times\dots\times Z_t$ with $z_1'+\dots+z_t'=\mathbbm{1}^{tr}$ and $z_i=z_i'$ for some $i\in \{1,\dots,t\}$, which satisfy $\dim \spn_{\mathbb{F}_P}(z_1,\dots,z_t,z_1',\dots,z_t')=d$, is at most
\[t^t\cdot (t-1)!^{(r-1)(d-t)/(t-2)}\]
\end{claim}

For the proof of this claim, we first need a series of further claims as preparations. We will bound the number of $t$-tuples $(z_1',\dots,z_t')\in Z_1\times\dots\times Z_t$ in the claim by looking at all possibilities for the component structure of the auxiliary bipartite graph $G(z_1,\dots,z_t,z_1',\dots,z_t')$ in the following definition.

\begin{definition}\label{def-auxiliary-bipartite-graph}
For $t$-tuples $(z_1,\dots,z_t),(z_1',\dots,z_t')\in Z_1\times\dots\times Z_t$ with $z_1+\dots+z_t=\mathbbm{1}^{tr}$ and $z_1'+\dots+z_t'=\mathbbm{1}^{tr}$, let us define the bipartite graph $G(z_1,\dots,z_t,z_1',\dots,z_t')$ with $t$ vertices on left side (labeled by $1,\dots,t$) and $t$ vertices on the right side (labeled by $1,\dots,t$) as follows. For $(i,j)\in \{1,\dots,t\}^2$ let us draw an edge between vertex $i$ on the left and vertex $j$ on the right if the vectors $z_i\in \{0,1\}^{tr}$ and $z_j'\in \{0,1\}^{tr}$ have a $1$-entry in a common position (i.e.\ if in some position both $z_i$ and $z_j'$ have a one).
\end{definition}

Note for any $(i,j)\in \{1,\dots,t\}^2$, the graph $G(z_1,\dots,z_t,z_1',\dots,z_t')$ has an edge between vertex $i$ on the left and vertex $j$ on the right if and only if the subsets of $\{1,\dots,tr\}$ given by the supports of the vectors $z_i\in \{0,1\}^{tr}$ and $z_j'\in \{0,1\}^{tr}$ have a non-empty intersection.

\begin{claim}\label{claim-number-connected-components}
For any $t$-tuples $(z_1,\dots,z_t),(z_1',\dots,z_t')\in Z_1\times\dots\times Z_t$ with $z_1+\dots+z_t=\mathbbm{1}^{tr}$ and $z_1'+\dots+z_t'=\mathbbm{1}^{tr}$, the graph $G(z_1,\dots,z_t,z_1',\dots,z_t')$ has at least $2t-\dim \spn_{\mathbb{F}_P}(z_1,\dots,z_t,z_1',\dots,z_t')$ connected components.
\end{claim}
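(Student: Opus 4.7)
The plan is to exhibit the space of $\mathbb{F}_P$-linear relations among the $2t$ vectors $z_1,\dots,z_t,z_1',\dots,z_t'$ as (at most) a $c$-dimensional space, where $c$ denotes the number of connected components of the bipartite graph $G=G(z_1,\dots,z_t,z_1',\dots,z_t')$. Since the span of these vectors has dimension $d$, the space of relations has dimension exactly $2t-d$, so such a bound will yield $2t-d \le c$, which is the desired inequality.

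First I would translate a relation into edge-constraints on $G$. Because $z_1+\dots+z_t=\mathbbm{1}^{tr}$ and each $z_i\in\{0,1\}^{tr}$, every coordinate $k\in\{1,\dots,tr\}$ is a $1$-coordinate of a unique $z_{i_k}$ and of a unique $z_{j_k}'$; by the definition of $G$, the pair $(i_k,j_k)$ is then an edge, and every edge of $G$ arises in this way. Evaluating a candidate relation $\sum_i a_iz_i+\sum_j b_jz_j'=0$ at each coordinate $k$ therefore shows that the relation holds if and only if $a_i+b_j=0$ for every edge $(i,j)$ of $G$.

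Second, I would walk along paths inside each connected component $C$ to conclude that such an edge-constraint forces $a_i=\alpha_C$ for all left vertices $i\in L_C$ and $b_j=-\alpha_C$ for all right vertices $j\in R_C$, for a single scalar $\alpha_C\in\mathbb{F}_P$. A technical point to verify is that $G$ has no isolated vertices, so that every component contains vertices on both sides and genuinely contributes just one scalar: each $z_i\in Z_i$ has exactly $r\ge 1$ ones, and at each such coordinate some $z_j'$ also has a one (by $\sum z_j'=\mathbbm{1}^{tr}$), which produces an edge at vertex $i$, and symmetrically on the primed side.

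Combining the two steps, every relation is uniquely determined by the $c$-tuple $(\alpha_{C_1},\dots,\alpha_{C_c})\in\mathbb{F}_P^c$, so the space of relations has dimension at most $c$, completing the argument. The only moderately subtle step is the translation from coordinate-wise vanishing to edge-wise constraints; after that is set up, the component-walking argument and the dimension count are routine.
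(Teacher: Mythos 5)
Your proof is correct and follows essentially the same route as the paper's: identify the relation space as $(2t-d)$-dimensional via rank–nullity, observe that the all-ones conditions translate coordinatewise vanishing into the constraint $a_i+b_j=0$ along every edge of $G$, and conclude that the coefficients are determined (up to one scalar) on each connected component. The only cosmetic difference is a sign convention (the paper writes the relation with $-a_j'$, so the edge constraint reads $a_i=a_j'$), and the paper dispenses with the isolated-vertex discussion by simply noting that left vertex $i$ and right vertex $i$ are always joined since $z_i,z_i'\in Z_i$ share the $1$-entry in position $i$.
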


We remark that one can show that the number of connected components of $G(z_1,\dots,z_t,z_1',\dots,z_t')$ is actually exactly $2t-\dim \spn_{\mathbb{F}_P}(z_1,\dots,z_t,z_1',\dots,z_t')$ (and not just at least this number), but this won't be relevant for the rest of our argument.

\begin{proof}[Proof of Claim \ref{claim-number-connected-components}]
Let us consider the linear subspace of $\mathbb{F}_P^{2t}$ consisting of all $(a_1,\dots,a_t,a_1',\dots,a_t')\in \mathbb{F}_P^{2t}$ such that $a_1z_1+\dots+a_tz_t-a_1'z_1'-\dots-a_t'z_t'=0$. First, we claim that for any such $(a_1,\dots,a_t,a_1',\dots,a_t')$ we must have $a_i=a_j'$ whenever there is an edge in $G(z_1,\dots,z_t,z_1',\dots,z_t')$ between vertex $i$ on the left and vertex $j$ on the right. Indeed, if there is such an edge, then there exists some $s\in \{1,\dots,tr\}$ such that both the vectors $z_i$ and $z_j'$ have a $1$-entry in the $s$-th position. Since $z_1+\dots+z_t=\mathbbm{1}^{tr}$ and $z_1'+\dots+z_t'=\mathbbm{1}^{tr}$ (as integer vectors), all other vectors among $z_1,\dots,z_t,z_1',\dots,z_t'\in\{0,1\}^{tr}$ have a zero in the $s$-th position. Hence in order for $a_1z_1+\dots+a_tz_t-a_1'z_1'-\dots-a_t'z_t'\in \mathbb{F}_P^{tr}$ to have a zero in the $s$-th position, we must indeed have $a_i=a_j'$ in $\mathbb{F}_P$.

Thus, for every $(a_1,\dots,a_t,a_1',\dots,a_t')\in \mathbb{F}_P^{2t}$ with $a_1z_1+\dots+a_tz_t-a_1'z_1'-\dots-a_t'z_t'=0$ in $\mathbb{F}_P^{tr}$, the entries in $(a_1,\dots,a_t,a_1',\dots,a_t')$ must be constant along every edge of the graph $G(z_1,\dots,z_t,z_1',\dots,z_t')$. Hence the entries in $(a_1,\dots,a_t,a_1',\dots,a_t')$ must also be constant on every connected component of  $G(z_1,\dots,z_t,z_1',\dots,z_t')$. So the dimension of the subspace of all such $(a_1,\dots,a_t,a_1',\dots,a_t')\in \mathbb{F}_P^{2t}$ can be at most the number of connected components of $G(z_1,\dots,z_t,z_1',\dots,z_t')$.

On the other hand, by the rank-nullity theorem, the dimension of the  subspace of all $(a_1,\dots,a_t,a_1',\dots,a_t')\in \mathbb{F}_P^{2t}$ with $a_1z_1+\dots+a_tz_t-a_1'z_1'-\dots-a_t'z_t'=0$ is exactly $2t-\dim \spn_{\mathbb{F}_P}(z_1,\dots,z_t,z_1',\dots,z_t')$. Thus, the number of components of $G(z_1,\dots,z_t,z_1',\dots,z_t')$ is at least  $2t-\dim \spn_{\mathbb{F}_P}(z_1,\dots,z_t,z_1',\dots,z_t')$.
\end{proof}

Note that for all $i=1,\dots,t$ the graph $G(z_1,\dots,z_t,z_1',\dots,z_t')$ in the Definition \ref{def-auxiliary-bipartite-graph} has an edge between vertex $i$ on the left and vertex $i$ on the right (since $z_i,z_i'\in Z_i$ and so both $z_i$ and $z_i'$ have a $1$-entry in the $i$-th position). This means that the component structure of $G(z_1,\dots,z_t,z_1',\dots,z_t')$ can be described by a partition of $\{1,\dots,t\}=I_1\cup\dots\cup I_{\ell}$ into non-empty subsets, where for each subset $I_k$ the vertices in $I_k$ on the left and the vertices in $I_k$ on the right together form a component of $G(z_1,\dots,z_t,z_1',\dots,z_t')$. Here, $\ell$ is the number of components of $G(z_1,\dots,z_t,z_1',\dots,z_t')$, so if $\dim \spn_{\mathbb{F}_P}(z_1,\dots,z_t,z_1',\dots,z_t')=d$, then by Claim \ref{claim-number-connected-components} we must have $\ell\ge 2t-d$. We will prove Claim \ref{claim-bound-possibilities-z-prime} by counting the number of possible $t$-tuples $(z_1',\dots,z_t')\in Z_1\times\dots\times Z_t$ (for some given $(z_1,\dots,z_t)\in Z_1\times\dots\times Z_t$) separately for each of the possible partitions $\{1,\dots,t\}=I_1\cup\dots\cup I_{\ell}$.

\begin{claim}\label{claim-number-z-prime-given-components}
For every given $t$-tuple $(z_1,\dots,z_t)\in Z_1\times\dots\times Z_t$ with $z_1+\dots+z_t=\mathbbm{1}^{tr}$, and every partition $\{1,\dots,t\}=I_1\cup\dots\cup I_{\ell}$, there are at most $(|I_1|!\dotsm |I_\ell|!)^{r-1}$ different $t$-tuples $(z_1',\dots,z_t')\in Z_1\times\dots\times Z_t$ with $z_1'+\dots+z_t'=\mathbbm{1}^{tr}$ such that the component structure of $G(z_1,\dots,z_t,z_1',\dots,z_t')$ is given by the partition $\{1,\dots,t\}=I_1\cup\dots\cup I_{\ell}$.
\end{claim}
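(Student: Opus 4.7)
The plan is to identify each $t$-tuple in $Z_1 \times \dots \times Z_t$ summing to $\mathbbm{1}^{tr}$ with a sequence of $r-1$ permutations of $\{1,\dots,t\}$, and then translate the component structure of $G(z_1,\dots,z_t,z_1',\dots,z_t')$ into a constraint on these permutations. To set this up, I would first observe that for any $(z_1,\dots,z_t) \in Z_1 \times \dots \times Z_t$ with $z_1+\dots+z_t = \mathbbm{1}^{tr}$, each $z_i$ has exactly one $1$-entry in each of the $r$ parts of the $r$-partition, and the entry in the first part is forced by $z_i \in Z_i$ to lie at position $i$. Hence $(z_1,\dots,z_t)$ is determined by permutations $\sigma_2, \dots, \sigma_r$ of $\{1,\dots,t\}$, where $\sigma_k(i)$ is the within-part index of the $1$ of $z_i$ in the $k$-th part. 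Similarly, $(z_1',\dots,z_t')$ corresponds to permutations $\sigma_2', \dots, \sigma_r'$ (with $\sigma_1 = \sigma_1' = \mathrm{id}$ by convention).

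Next, $G(z_1,\dots,z_t,z_1',\dots,z_t')$ has an edge between $i$ on the left and $j$ on the right iff $z_i$ and $z_j'$ share a $1$-entry in some part $k$, which happens precisely when $\sigma_k(i) = \sigma_k'(j)$ for some $k \in \{1,\dots,r\}$. Setting $\pi_k = (\sigma_k')^{-1} \circ \sigma_k$, this is equivalent to $\pi_k(i) = j$ for some $k$. In particular $\pi_1 = \mathrm{id}$, so every self-loop $(i,i)$ is present in $G$, which means vertex $i$ on the left is always in the same component as vertex $i$ on the right; the component structure of $G$ on the joint vertex set is therefore determined by the equivalence relation on $\{1,\dots,t\}$ generated by the edges of $\pi_2, \dots, \pi_r$.

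Now fix $(z_1,\dots,z_t)$, so that $\sigma_2, \dots, \sigma_r$ are fixed. Then the map $(\pi_2,\dots,\pi_r) \mapsto (\sigma_2',\dots,\sigma_r') = (\sigma_2 \pi_2^{-1}, \dots, \sigma_r \pi_r^{-1})$ is a bijection between $(r-1)$-tuples of permutations of $\{1,\dots,t\}$ and $t$-tuples $(z_1',\dots,z_t') \in Z_1\times\dots\times Z_t$ with $z_1'+\dots+z_t' = \mathbbm{1}^{tr}$. If the component structure of $G$ equals the partition $\{1,\dots,t\} = I_1 \cup \dots \cup I_\ell$, then every edge of $G$ lies inside some block $I_s$, and so each $\pi_k$ must map each block into itself. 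For every fixed partition of this type, the number of permutations of $\{1,\dots,t\}$ preserving it is exactly $|I_1|!\dotsm|I_\ell|!$, so the total number of eligible $(\pi_2,\dots,\pi_r)$, and hence of $(z_1',\dots,z_t')$, is at most $(|I_1|!\dotsm|I_\ell|!)^{r-1}$, which is the desired bound.

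There is essentially no technical obstacle here beyond making this dictionary precise; the main point is the translation ``edges of $G$ come from entries shared between $z_i$ and $z_j'$'' $\Longleftrightarrow$ ``$\pi_k(i) = j$ for some $k$'', together with the observation that the self-loops forced by $\pi_1 = \mathrm{id}$ make the component structure of $G$ coincide with the equivalence relation generated by $\pi_2, \dots, \pi_r$. Since an upper bound suffices, we may relax from ``components exactly equal to $I_1,\dots,I_\ell$'' to ``components contained in the blocks $I_s$'', which is what the permutation-preservation condition captures.
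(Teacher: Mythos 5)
Your proof is correct and takes essentially the same approach as the paper: the paper counts, for each block $I_k$ and each part of the $r$-partition other than the first, the ways to redistribute the $|I_k|$ covered elements among the $|I_k|$ edges of the new matching, which is exactly your count of partition-preserving permutations $\pi_k$ and yields the same $|I_k|!$ factor per part per block. Your permutation dictionary is just a clean reformulation of that counting.
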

\begin{proof}
Recall that any $t$-tuple $(z_1',\dots,z_t')\in Z_1\times\dots\times Z_t$ with $z_1'+\dots+z_t'=\mathbbm{1}^{tr}$ corresponds to a partition of $\{1,\dots,tr\}$ into $t$ sets or size $r$ (given by the supports of the vectors $z_1',\dots,z_t'$), each containing exactly one element from each of the subsets $\{1,\dots,t\}, \{t+1,\dots, 2t\}, \dots, \{t(r-1)+1,\dots, tr\}$. If the components of $G(z_1,\dots,z_t,z_1',\dots,z_t')$ are given by the partition $\{1,\dots,t\}=I_1\cup\dots\cup I_{\ell}$, then for every $k=1,\dots,\ell$ the vertices in $I_k$ on the right side can only have edges to the vertices in $I_k$ on the left side (but not to any of the other vertices on the left side). This means that the supports of the vectors $z_i'$ with $i\in I_k$ can only share elements with the supports of the vectors $z_i$ with $i\in I_k$ (but none of the other vectors among $z_1,\dots,z_t$). So for each of the subsets $\{t+1,\dots, 2t\}, \dots, \{t(r-1)+1,\dots, tr\}$, the $|I_k|$ elements in the union of the supports of the vectors $z_i$ for $i\in I_k$ must be distributed among the supports of the $|I_k|$ vectors $z_i'$ for $i\in I_k$. There are at most $|I_k|!$ possibilities to form such a distribution for each of the subsets $\{t+1,\dots, 2t\}, \dots, \{t(r-1)+1,\dots, tr\}$. In the first subset $\{1,\dots,tr\}$, we already know that every element $i=1,\dots,t$ is contained in the support of $z_i'$ as $(z_1',\dots,z_t')\in  Z_1\times\dots\times Z_t$ (so there is only one possibility for the distribution). Thus, there are at most $(|I_k|!)^{r-1}$ possibilities for choosing the vectors $z_i'$ for $i\in I_k$. In total, this shows that there are at most $(|I_1|!\dotsm |I_\ell|!)^{r-1}$ possibilities for choosing $(z_1',\dots,z_t')\in Z_1\times\dots\times Z_t$ as in the claim statement.
\end{proof}

In order to bound the expression $|I_1|!\dotsm |I_\ell|!$ occurring in Claim \ref{claim-number-z-prime-given-components}, we use the following simple observation.

\begin{claim}\label{claim-factorials}
For integers $1\le a\le b$ with $b\ge 2$, we have $a!\le b!^{(a-1)/(b-1)}$.
\end{claim}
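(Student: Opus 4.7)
The plan is to reduce the inequality to a simple monotonicity statement about averages of logarithms. Taking logs, the claim $a! \le b!^{(a-1)/(b-1)}$ is equivalent (for $a \ge 2$) to
\[
\frac{\ln(a!)}{a-1} \;\le\; \frac{\ln(b!)}{b-1},
\]
and the case $a=1$ holds with equality since $a! = 1 = b!^0$, so it suffices to establish the displayed inequality whenever $2 \le a \le b$.

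First I would observe that, for any integer $n \ge 2$,
\[
\frac{\ln(n!)}{n-1} \;=\; \frac{1}{n-1}\sum_{k=2}^{n} \ln k
\]
is exactly the arithmetic mean of the $n-1$ numbers $\ln 2, \ln 3, \dots, \ln n$. Writing $g(n) = \ln(n!)/(n-1)$, it therefore suffices to show that $g$ is non-decreasing in $n$ for $n \ge 2$, since then $g(a) \le g(b)$ whenever $2 \le a \le b$.

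To show $g(n+1) \ge g(n)$, after clearing denominators the inequality becomes
\[
(n-1)\ln(n+1) \;\ge\; \sum_{k=2}^{n} \ln k.
\]
The right-hand side is a sum of $n-1$ terms, each of which is at most $\ln n \le \ln(n+1)$, so the inequality is immediate. This monotonicity completes the proof of the claim.

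The whole argument is genuinely routine; the only thing to watch is the boundary case $a = 1$, which must be handled separately because the reformulation in terms of $g$ requires dividing by $a - 1$. Aside from that, no step is an obstacle.
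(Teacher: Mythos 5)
Your proof is correct. It rests on the same underlying observation as the paper's --- namely that $\ln(n!)/(n-1)$, the average of $\ln 2,\dots,\ln n$, is non-decreasing in $n$ --- but the execution differs: you establish monotonicity one step at a time, via the telescoping inequality $(n-1)\ln(n+1)\ge\sum_{k=2}^{n}\ln k$, and then chain from $a$ to $b$, whereas the paper compares $a!^{b-1}$ and $b!^{a-1}$ directly in one shot, by writing $a!^{b-1}=(2\dotsm a)^{a-1}(2\dotsm a)^{b-a}$ and observing that $(2\dotsm a)^{b-a}$ and $((a+1)\dotsm b)^{a-1}$ are products of the same number, $(a-1)(b-a)$, of factors, with every factor of the first at most $a$ and every factor of the second at least $a+1$. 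The two arguments are equally elementary; the paper's avoids logarithms and induction, while yours makes the "increasing average" intuition explicit. Your handling of the boundary case $a=1$ (where one cannot divide by $a-1$) is the right precaution and mirrors the paper's separate treatment of $a=1$ and $a=b$.
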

\begin{proof}
For $a=1$, note that $1!=1= b!^{(1-1)/(b-1)}$, and for $a=b$ note that $a!= a!^{(a-1)/(a-1)}$. For $2\le a \le b-1$, we have
\[a!^{b-1}=(2\dotsm a)^{b-1}=(2\dotsm a)^{a-1}\cdot (2\dotsm a)^{b-a}\le (2\dotsm a)^{a-1} \cdot ((a+1)\dotsm b)^{a-1}=b!^{a-1},\]
where the inequality holds because $(2\dotsm a)^{b-a}$ is a product of $(a-1)(b-a)$ factors in $\{2,\dots,a\}$ while $((a+1)\dotsm b)^{a-1}$ is a product of $(a-1)(b-a)$ factors in $\{a+1,\dots,b\}$. Now, rearranging gives the desired inequality.
\end{proof}

Now, we are ready to prove Claim \ref{claim-bound-possibilities-z-prime}.

\begin{proof}[Proof of Claim \ref{claim-bound-possibilities-z-prime}]
For any $t$-tuple $(z_1',\dots,z_t')\in Z_1\times\dots\times Z_t$ with $z_1'+\dots+z_t'=\mathbbm{1}^{tr}$ and $z_i=z_i'$ for some $i\in \{1,\dots,t\}$, such that $\dim \spn_{\mathbb{F}_P}(z_1,\dots,z_t,z_1',\dots,z_t')=d$, let us consider the bipartite graph $G(z_1,\dots,z_t,z_1',\dots,z_t')$ as in Definition \ref{def-auxiliary-bipartite-graph}. The component structure of this auxiliary bipartite graph is given by some partition $\{1,\dots,t\}=I_1\cup\dots\cup I_{\ell}$, and by Claim \ref{claim-number-connected-components} we must have $\ell\ge 2t-d$. Furthermore, at least one of the sets $I_1,\dots, I_{\ell}$ must have size $1$. Indeed, recalling that there is an index $i\in \{1,\dots,t\}$ such that $z_i=z_i'$, one of the sets  $I_1,\dots, I_{\ell}$ must be equal to $\{i\}$. In particular, this means that each of the sets $|I_1|,\dots, |I_{\ell}|$ has size at most $t-1$.

There are at most $t^t$ possible partitions $\{1,\dots,t\}=I_1\cup\dots\cup I_{\ell}$ of $\{1,\dots,t\}$ into at least $2t-d$ non-empty subsets of size at most $t-1$. For each such partition $\{1,\dots,t\}=I_1\cup\dots\cup I_{\ell}$, by Claim \ref{claim-number-z-prime-given-components} there are at most $(|I_1|!\dotsm |I_\ell|!)^{r-1}$ possibilities for $(z_1',\dots,z_t')\in Z_1\times\dots\times Z_t$ with $z_1'+\dots+z_t'=\mathbbm{1}^{tr}$ such that the component structure of $G(z_1,\dots,z_t,z_1',\dots,z_t')$ is given by the partition $\{1,\dots,t\}=I_1\cup\dots\cup I_{\ell}$. Note that by Claim \ref{claim-factorials} (using that $|I_1|,\dots, |I_{\ell}|\le t-1$) we have
\[|I_1|!\dotsm |I_\ell|!\le (t-1)!^{(|I_1|-1)/(t-2)}\dotsm (t-1)!^{(|I_\ell|-1)/(t-2)}=(t-1)!^{(t-\ell)/(t-2)}\le (t-1)!^{(d-t)/(t-2)},\]
where in the last step we used that $\ell\ge 2t-d$. Hence the total number of possibilities for $(z_1',\dots,z_t')\in Z_1\times\dots\times Z_t$ as in Claim \ref{claim-bound-possibilities-z-prime} is at most
\[\sum_{\{1,\dots,t\}=I_1\cup\dots\cup I_{\ell}} (|I_1|!\dotsm |I_\ell|!)^{r-1}\le t^t\cdot \left((t-1)!^{(d-t)/(t-2)}\right)^{r-1}=t^t\cdot (t-1)!^{(r-1)(d-t)/(t-2)},\]
where the sum is over all partitions $\{1,\dots,t\}=I_1\cup\dots\cup I_{\ell}$ of $\{1,\dots,t\}$ into at least $2t-d$ non-empty subsets of size at most $t-1$.
\end{proof}

Finally, we prove Lemma \ref{lemma-probability-isolated-candidate}.

\begin{proof}[Proof of Lemma \ref{lemma-probability-isolated-candidate}]
Recall that $(z_1,\dots,z_t)\in Z_1\times\dots\times Z_t$ is a $t$-tuple with $z_1+\dots+z_t=\mathbbm{1}^{tr}$. We want to show that $(z_1,\dots,z_t)$ is an isolated candidate $t$-tuple with probability at least $R/(2P^{t-1})$. By Claim \ref{claim-probability-single-candidate}, $(z_1,\dots,z_t)$ is a  candidate $t$-tuple with probability  $R/P^{t-1}$. So it remains to show that the probability that $(z_1,\dots,z_t)$ is a candidate $t$-tuple which is not isolated is at most $R/(2P^{t-1})$.

If $(z_1,\dots,z_t)$ is a candidate $t$-tuple, but is not isolated, then there must be another candidate $t$-tuple $(z_1',\dots,z_t')\in Z_1\times \dots\times Z_t$ such that $z_i'=z_i$ for some $i\in \{1,\dots,t\}$. Since $(z_1',\dots,z_t')\neq (z_1',\dots,z_t')$, the dimension $d=\dim \spn_{\mathbb{F}_P}(z_1,\dots,z_t,z_1',\dots,z_t')$ must satisfy $t+1\le d\le 2t-1$ by Claim \ref{claim-values-of-d}. We can therefore bound
\begin{align*}
&\Pr[(z_1,\dots,z_t)\text{ is a candidate $t$-tuple and not isolated}]\\
&\quad\quad\quad\le \sum_{\substack{(z_1',\dots,z_t')\in Z_1\times \dots\times Z_t\\z_1'+\dots+z_t'=\mathbbm{1}^{tr}\\z_i'=z_i\text{ for some }i}}\Pr[(z_1,\dots,z_t)\text{ and }(z_1,\dots,z_t)\text{ are candidate $t$-tuples}]\\
&\quad\quad\quad= \sum_{d=t+1}^{2t-1}\ \sum_{\substack{(z_1',\dots,z_t')\in Z_1\times \dots\times Z_t\\z_1'+\dots+z_t'=\mathbbm{1}^{tr}\\z_i'=z_i\text{ for some }i\\ \dim \spn_{\mathbb{F}_P}(z_1,\dots,z_t,z_1',\dots,z_t')=d}}\Pr[(z_1,\dots,z_t)\text{ and }(z_1,\dots,z_t)\text{ are candidate $t$-tuples}]\\
&\quad\quad\quad\le \sum_{d=t+1}^{2t-1}\ \sum_{\substack{(z_1',\dots,z_t')\in Z_1\times \dots\times Z_t\\z_1'+\dots+z_t'=\mathbbm{1}^{tr}\\z_i'=z_i\text{ for some }i\\ \dim \spn_{\mathbb{F}_P}(z_1,\dots,z_t,z_1',\dots,z_t')=d}}\frac{R}{P^{d-1}}\le \sum_{d=t+1}^{2t-1} t^t\cdot (t-1)!^{(r-1)(d-t)/(t-2)}\cdot \frac{R}{P^{d-1}},
\end{align*}
where in the third step we used Claim \ref{claim-prob-two-tuples-candidates} and in the fourth step Claim \ref{claim-bound-possibilities-z-prime}. Recalling (\ref{eq-def-P}), we have $(t-1)!^{(r-1)(d-t)/(t-2)}\le (P/(2t^{t+1}))^{d-t}\le P^{d-t}/(2t^{t+1})$ for all $d\ge t+1$. Hence
\[\Pr[(z_1,\dots,z_t)\text{ is a candidate $t$-tuple and not isolated}]\le \sum_{d=t+1}^{2t-1} t^t\cdot \frac{P^{d-t}}{2t^{t+1}}\cdot \frac{R}{P^{d-1}}\le \sum_{d=t+1}^{2t-1} \frac{R}{2t\cdot P^{t-1}}\le \frac{R}{2P^{t-1}},\]
as desired.
\end{proof}

We remark that Theorem \ref{thm-fixed-t-F-stronger} can be proved in a very similar way, but the deduction from the known result in Proposition \ref{prop-t-color-sum-free} in the previous subsection is shorter.

\subsection{Proof of the upper bound in Theorem \ref{thm-fixed-t-f}}
\label{subsect-linear-algebra}

As mentioned in the introduction, Munh\'{a} Correia, Sudakov, and Tomon \cite{correia-sudakov-tomon} proved the upper bound $F(t,r)\le (t-1)\cdot {tr\choose r}$ in the setting of Theorem \ref{thm-fixed-t-F}. Their argument is based on linear algebra, and leads to the following linear-algebraic statement.

\begin{theorem}\label{thm-linear-algebra}
Let $V$ be a vector space over a field $\mathbb{F}$, let $t\ge 2$ and let $\varphi: V^t=V\times\dots\times V\to \mathbb{F}$ be a multi-linear map. Consider a collection of $t$-tuples $(x_{j,1},\dots,x_{j,t})\in V^t$ for $j=1,\dots,N$, with $N>(t-1)\cdot \dim V$, such that $\varphi(x_{j,1},\dots,x_{j,t})\ne 0$ for $j=1,\dots,N$. Then there exist distinct indices $j_1,\dots,j_t\in\{1,\dots,N\}$ and elements $y_1,\dots,y_t$ with $y_i\in \{x_{j_i,1},\dots,x_{j_i,t}\}$ for $i=1,\dots,t$ such that $\varphi(y_1,\dots,y_t)\neq 0$.
\end{theorem}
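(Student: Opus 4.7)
The plan is to prove Theorem \ref{thm-linear-algebra} by induction on $n = \dim V$, with $t \ge 2$ held fixed. The base case $n=0$ is vacuous, since $V^t = 0$ forces $\varphi(x_{j,1},\dots,x_{j,t}) = 0$. So suppose $n \ge 1$ and the result holds in all strictly smaller dimensions. Assume for contradiction that we have $N \ge (t-1)n + 1$ tuples satisfying $\varphi(x_{j,1},\dots,x_{j,t}) \ne 0$ but admitting no rainbow.

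The key construction is a \emph{maximal partial rainbow}: I would let $s \ge 0$ be the largest integer for which there exist distinct indices $j_1,\dots,j_s \in \{1,\dots,N\}$ and choices $y_i \in \{x_{j_i,1},\dots,x_{j_i,t}\}$ making the residual multilinear form
\[
\Phi(w_1,\dots,w_{t-s}) := \varphi(y_1,\dots,y_s,w_1,\dots,w_{t-s})
\]
on $V^{t-s}$ not identically zero. Such $s$ exists with the trivial witness $s=0$ (since $\varphi \not\equiv 0$), and the no-rainbow assumption forces $s \le t-1$. Maximality of $s$ means that for every $j \in \{1,\dots,N\}\setminus\{j_1,\dots,j_s\}$ and every $k \in \{1,\dots,t\}$ one has $\Phi(x_{j,k},w_1,\dots,w_{t-s-1}) \equiv 0$; otherwise setting $y_{s+1} = x_{j,k}$, $j_{s+1} = j$ would extend the partial rainbow, contradicting maximality. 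Equivalently, the linear map
\[
\Psi : V \longrightarrow \operatorname{Mult}(V^{t-s-1},\mathbb{F}), \qquad \Psi(y)(w_1,\dots,w_{t-s-1}) := \Phi(y,w_1,\dots,w_{t-s-1}),
\]
annihilates every vector $x_{j,k}$ with $j \notin \{j_1,\dots,j_s\}$. Since $\Phi \not\equiv 0$, the map $\Psi$ is nonzero, so $V' := \ker \Psi$ is a proper subspace of $V$, giving $\dim V' \le n-1$.

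Now every one of the remaining $N-s$ tuples $(x_{j,1},\dots,x_{j,t})$ lies in $V'^t$, and still satisfies $\varphi|_{V'^t}(x_{j,1},\dots,x_{j,t}) = \varphi(x_{j,1},\dots,x_{j,t}) \ne 0$. Combined with
\[
N - s \;\ge\; (t-1)n + 1 - (t-1) \;=\; (t-1)(n-1) + 1 \;>\; (t-1)\dim V',
\]
the inductive hypothesis applied to $(V',\varphi|_{V'^t})$ produces distinct $j'_1,\dots,j'_t \in \{1,\dots,N\}\setminus\{j_1,\dots,j_s\}$ and $y'_i \in \{x_{j'_i,1},\dots,x_{j'_i,t}\}$ with $\varphi(y'_1,\dots,y'_t) \ne 0$. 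This is a rainbow matching in the original collection, contradicting the no-rainbow hypothesis.

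The main obstacle is phrasing maximality so that it automatically produces a \emph{proper} subspace $V' \subsetneq V$ containing all remaining tuple-vectors: one needs multilinearity of $\varphi$ so that the vanishing condition on each individual $x_{j,k}$ propagates to the linear map $\Psi$, and one needs $\Phi \not\equiv 0$ to guarantee $\Psi \ne 0$ and hence $V' \subsetneq V$. The counting step uses the slack $s \le t-1$ to absorb the $s$ used-up tuples when passing from $(t-1)n+1$ in $V$ to the threshold $(t-1)(n-1)+1$ in $V'$, and this is precisely why the bound $(t-1)\dim V$ comes out sharp.
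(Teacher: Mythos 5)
Your argument is correct and takes essentially the same route as the paper's direct, self-contained proof of Theorem \ref{thm-linear-algebra}: induction on $\dim V$, where a maximal partial rainbow forces all remaining tuple-vectors into a proper subspace to which the inductive hypothesis applies. The paper's bookkeeping differs slightly---it tracks repetitions of a distinguished index $N$ and shows the span of the unused vectors is proper, rather than working with the kernel of your linear map $\Psi$---but the underlying combinatorial and linear-algebraic idea is identical.
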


This theorem implies the bound $F(t,r)\le (t-1)\cdot {tr\choose r}$ in Theorem \ref{thm-correia-sudakov-tomon} due to \cite{correia-sudakov-tomon} by taking $V$ to be the $r$-th exterior power of some $(rt)$-dimensional vector space and taking $\varphi(x_1,\dots,x_t) =x_1\wedge \dots\wedge x_t$ (the details of the deduction can be found later in this subsection). 

The question of how large one needs to take $N$ in Theorem \ref{thm-linear-algebra} can be viewed as the natural linear-algebraic abstraction of Problem \ref{problem-F}. Interestingly, while the bound $F(t,r)\le (t-1)\cdot {tr\choose r}$ is not tight in general, the bound $N>(t-1)\cdot \dim V$ in Theorem \ref{thm-linear-algebra} is actually best-possible for any given $t\ge 2$ and any given dimension $\dim V$. Indeed, one can take $V=\mathbb{F}^{d}$ (with standard basis vectors $e_1,\dots, e_d$) for any given $d=\dim V$, take $\varphi: V^t \to \mathbb{F}$ to be the multi-linear defined by $\varphi(x_1,\dots,x_t)=\sum_{s=1}^d x_1^{(s)}\dotsm x_t^{(s)}$ where $x_i=(x_i^{(1)},\dots,x_i^{(d)})$ are the coordinates of $x_i$ for $i=1,\dots,t$, and take collection of $t$-tuples $(x_{j,1},\dots,x_{j,t})\in V^t$ for $j=1,\dots,(t-1)d$ consisting of $t-1$ copies of $(e_s,\dots,e_s)$ for $s=1,\dots,d$.

Munh\'{a} Correia, Sudakov, and Tomon \cite{correia-sudakov-tomon} did not state Theorem \ref{thm-linear-algebra} in this general form (they worked only in the setting where $V$ is an exterior power and $\varphi(x_1,\dots,x_t) =x_1\wedge \dots\wedge x_t$), but their arguments for proving $F(t,r)\le (t-1)\cdot {tr\choose r}$ easily generalize to this setting. They prove and use a result about the max-flattening rank of tensors of a certain diagonal-like shape (called ``semi-diagonal''). For the reader's convenience, we give an alternative self-contained proof of Theorem \ref{thm-linear-algebra}
 
\begin{proof}[Proof of Theorem \ref{thm-linear-algebra}]
Let us prove the theorem by induction on $\dim V$. For $\dim V=0$ the theorem is tautologically true (since $V=\{0\}$ and so it is impossible to have $\varphi(x_{j,1},\dots,x_{j,t})\ne 0$ for any $j$).

Let us now assume that $\dim V\ge 1$. Let us consider all possible choices of indices $j_1,\dots,j_t\in\{1,\dots,N\}$ satisfying the following three conditions:
\begin{itemize}
\item $N$ appears at least once among the indices $j_1,\dots,j_t$.
\item Every element of $\{1,\dots,N-1\}$ appears at most once among the indices $j_1,\dots,j_t$.
\item There exist  elements $y_1,\dots,y_t$ with $y_i\in \{x_{j_i,1},\dots,x_{j_i,t}\}$ for $i=1,\dots,t$ such that $\varphi(y_1,\dots,y_t)\neq 0$.
\end{itemize}
Note that it is indeed possible to choose indices $j_1,\dots,j_t\in\{1,\dots,N\}$ with these three conditions, for example by taking $j_1=\dots=j_t=N$ (then the third condition is satisfied because we can take $y_i=x_{N,i}$ for $i=1,\dots,t$ since $\varphi(x_{N,1},x_{N,2},\dots,x_{N,t})\neq 0$ by the assumption in Theorem \ref{thm-linear-algebra}).

Among all choices of indices $j_1,\dots,j_t\in\{1,\dots,N\}$ satisfying these conditions, let us fix a choice with the minimum possible number of repetitions of $N$. If there is only one repetition of $N$, then $j_1,\dots,j_t$ are distinct and so the conclusion of Theorem \ref{thm-linear-algebra} is satisfied. So let us now assume that among $j_1,\dots,j_t\in\{1,\dots,N\}$ there are $\ell\ge 2$ repetitions of $N$.

By relabeling the indices in $\{1,\dots,N-1\}$ we may assume that besides the $\ell$ repetitions of $N$ the other $t-\ell$ indices among $j_1,\dots j_t$ are $N-1, N-2, \dots, N-t+\ell$. Furthermore, choose an index $i\in \{1,\dots,t\}$ such that $j_i=N$. 

Suppose we had $y_i\in \spn(\bigcup_{j=1}^{N-t+\ell-1}\{x_{j,1},\dots,x_{j,t}\})$. Then we could write $y_i$ as a linear combination of the vectors $x_{j,k}$ for $j=1,\dots,N-t+\ell-1$ and $k=1,\dots,t$. Since $\varphi(y_1,\dots,y_t)\neq 0$, this would mean that we must have $\varphi(y_1,\dots,y_{i-1},x_{j,k},y_{i+1},\dots,y_t)\neq 0$ for some $j\in \{1,\dots,N-t+\ell-1\}$ and $k\in \{1,\dots,t\}$. But then choosing $j_i'=j$ and $y_{i}'=x_{j,k}$, the list of indices $j_1,\dots,j_{i-1},j_i',j_{i+1},\dots,j_t$ obtained from $j_1,\dots,j_t$ upon replacing $j_i$ by $j_i'$ would also satisfy the three conditions above and would have only $\ell-1$ repetitions of $N$. This would  be a contradiction to our choice of $j_1,\dots,j_t$.

Hence $y_i\not\in \spn(\bigcup_{j=1}^{N-t+\ell-1}\{x_{j,1},\dots,x_{j,t}\})$. In particular, this span is not the entire space $V$ and so we can find a subspace $V'\su V$ with $\dim V'=\dim V-1$ such that $\{x_{j,1},\dots,x_{j,t}\}\su V'$ for all $j=1,\dots,N-t+\ell-1$. Note that $N-t+\ell-1\ge N-t+2-1=N-(t-1)>(t-1)\cdot \dim V'$. Hence by the induction hypothesis (applied to $V'$ and the restriction of the multi-linear map $\varphi$ to $V'\times \dots\times V'$), there exist distinct indices $j_1,\dots,j_t\in\{1,\dots,N-t+\ell-1\}$ and elements $y_1,\dots,y_t$ with $j_i\in \{x_{j_i,1},\dots,x_{j_i,t}\}$ for $i=1,\dots,t$ such that $\varphi(y_1,\dots,y_t)\neq 0$. This finishes the induction step.
\end{proof}

Let us now show how Theorem \ref{thm-correia-sudakov-tomon}, due to Munh\'{a} Correia, Sudakov, and Tomon \cite{correia-sudakov-tomon}, follows from Theorem~\ref{thm-linear-algebra} (this argument essentially appears in \cite{correia-sudakov-tomon}, but we repeat it here as motivation for the slightly more complicated deduction of the upper bound in Theorem \ref{thm-fixed-t-f} below).

\begin{proof}[Proof of Theorem \ref{thm-correia-sudakov-tomon} from \cite{correia-sudakov-tomon}]

Assume that $M_1,\dots,M_N$ are matchings as in Problem \ref{problem-F}, with $N>(t-1)\cdot {tr\choose r}$. Assume that the vertices of the underlying hypergraph are labeled by $1,\dots,M$, and for $j=1,\dots,N$, let $e_{j,1},\dots,e_{j,t}\su \{1,\dots,M\}$ be the edges of matching $M_j$. For some infinite field $\mathbb{F}$, consider the $(rt)$-dimensional vector space $\mathbb{F}^{rt}$ and let $V$ be the $r$-the exterior power $\bigwedge^r \mathbb{F}^{rt}$. Let us choose vectors $z_1,\dots,z_M\in \mathbb{F}^{rt}$ in general position, meaning that any $rt$ of these vectors are linearly independent. For any edge $e\su  \{1,\dots,M\}$ of our $r$-uniform hypergraph, let us define $x(e)\in V=\bigwedge^r \mathbb{F}^{rt}$ to be the vector $x(e)= z_{v(1)}\wedge \dots \wedge z_{v(r)}$, where $1\le v(1)<\dots<v(r)\le M$ are the labels of the vertices in $e$. Now, for $j=1,\dots,N$ and $i=1,\dots,t$, define $x_{j,i}=x(e_{i,j})$. Furthermore, let us define $\varphi: V^t \to \mathbb{F}$ to be the multilinear map given by $\varphi(x_1,\dots,x_t)=x_1\wedge\dots\wedge x_t\in \bigwedge^t V\cong \bigwedge^{rt} \mathbb{F}^{rt}\cong \mathbb{F}$ (where, formally, we fix some isomorphism between the $1$-dimensional vector space $\bigwedge^t V$ and $\mathbb{F}$). Since $z_1,\dots,z_M\in \mathbb{F}^{rt}$ are in general position, for any $v(1),\dots, v(tr)\in \{1,\dots,M\}$ we have $z_{v(1)}\wedge \dots\wedge z_{v(tr)}\neq 0$ in $\bigwedge^{rt} \mathbb{F}^{rt}$ if and only if $v(1),\dots, v(tr)$ are distinct. Hence for any edges $e_1,\dots,e_t$ of our hypergraph, we have $\varphi(x(e_1),\dots,x(e_t))\neq 0$ if and only if $e_1,\dots,e_t$ form a matching. In particular, we have $\varphi(x_{j,1},\dots,x_{j,t})=\varphi(x(e_{j,1}),\dots,x(e_{j,t}))\ne 0$ for $j=1,\dots,N$. Since $N>(t-1)\cdot {tr\choose r}=(t-1)\cdot \dim V$, Theorem \ref{thm-linear-algebra} now implies that there exist distinct indices $j_1,\dots,j_t\in \{1,\dots,N\}$ and edges $e_1,\dots,e_t$ with $e_i\in \{e_{j_i,1},\dots,e_{j_i,t}\}=M_j$ for $i=1,\dots,t$ such that for $y_i=x(e_i)$ we have $\varphi(x(e_1),\dots,x(e_t))= \varphi(y_1,\dots,y_t)\neq 0$.  Thus, the edges $e_1,\dots,e_t$ form a rainbow matching.
 \end{proof}

In a similar way, we can deduce the upper bound for the $r$-partite setting for fixed $t$, stated in Theorem \ref{thm-fixed-t-f}, from Theorem \ref{thm-linear-algebra}.

\begin{proof}[Proof of the upper bound in Theorem \ref{thm-fixed-t-f}]
We need to show that for any $N>(t-1)\cdot t^r$, any collection $M_1,\dots,M_N$ of matchings of size $t$ in some $r$-partite $r$-uniform hypergraph $\mathcal{H}$ must have a rainbow matching of size $t$. Let $V_1,\dots, V_r$ be the vertex sets in the $r$-partition of the hypergraph $\mathcal{H}$.  Furthermore, for $j=1,\dots,N$, let $e_{j,1},\dots,e_{j,t}\su \{1,\dots,M\}$ be the edges of matching $M_j$ (each of these edges $e_{j,i}$ contains exactly one vertex from each of the sets $V_1,\dots,V_t$). For some infinite field $\mathbb{F}$, let $W_1,\dots, W_r$ be $t$-dimensional vector spaces over $\mathbb{F}$, let $W=W_1\oplus \dots\oplus W_r$, and let $U$ be the $r$-th exterior power $\bigwedge^r W$. Finally, let $V\su U$ be the subspace of $U$ generated by vectors of the form $w_1\wedge \dots\wedge w_r$ with $w_1\in W_1,\dots,w_r\in W_r$, and note that $\dim V=t^r$.

For $i=1,\dots,t$, let us choose vectors $z(v)\in W_i$ for every $v\in V_i$ in general position, meaning that any $t$ of these vectors in $W_i$ are linearly independent. For any edge $e$ of our $r$-uniform hypergraph, let us define $x(e)\in V\su U=\bigwedge^r W$ to be the vector $x(e)= z(v_1)\wedge \dots \wedge z(v_r)$, where $v_1\in V_1,\dots, v_r\in V_r$ are the vertices in $e$. Now, for $j=1,\dots,N$ and $i=1,\dots,t$, define $x_{j,i}=x(e_{j,i})\in V$. Furthermore, let us define $\varphi: U^t=U\times\dots\times U\to \mathbb{F}$ to be the multilinear map given by $\varphi(x_1,\dots,x_t)=x_1\wedge\dots\wedge x_t\in \bigwedge^t U\cong \bigwedge^{rt} W\cong \mathbb{F}$ (where, formally, we fix some isomorphism between the $1$-dimensional vector space $\bigwedge^t U$ and $\mathbb{F}$). Then the restriction of $\varphi$ to $V^t\su U^t$ is also a multi-linear map. Now, for any $r$-tuples $(v_{1,1},\dots,v_{1,r}),\dots, (v_{t,1},\dots,v_{t,r})\in V_1\times\dots\times V_r$ we have $z(v_{1,1})\wedge \dots\wedge z(v_{1,r})\wedge\dots\wedge z(v_{t,1})\wedge \dots\wedge z(v_{t,r})\neq 0$ in $\bigwedge^t U\cong \bigwedge^{rt} W$ if and only if the $tr$ vertices $v_{1,1},\dots,v_{1,r},\dots, v_{t,1},\dots,v_{t,r}$ are distinct (note that, if $v_{1,1},\dots,v_{1,r},\dots, v_{t,1},\dots,v_{t,r}$ are distinct, then $z(v_{1,i}),\dots, z(v_{t,i})\in W_i$ are linearly independent by our general position assumption) . Hence for any edges $e_1,\dots,e_t$ of $\mathcal{H}$, we have $\varphi(x(e_1),\dots,x(e_t))\neq 0$ if and only if $e_1,\dots,e_t$ form a matching. In particular, we have $\varphi(x_{j,1},\dots,x_{j,t})=\varphi(x(e_{j,1}),\dots,x(e_{j,t}))\ne 0$ for $j=1,\dots,N$. Together with $N>(t-1)\cdot t^r=(t-1)\cdot \dim V$, Theorem \ref{thm-linear-algebra} now implies that there exist distinct indices $j_1,\dots,j_t\in \{1,\dots,N\}$ and elements $e_1,\dots,e_t$ with $e_i\in \{e_{j_i,1},\dots,e_{j_i,t}\}=M_j$ for $i=1,\dots,t$ such that for $y_i=x(e_i)$ we have $\varphi(x(e_1),\dots,x(e_t))= \varphi(y_1,\dots,y_t)\neq 0$.  Thus, the edges $e_1,\dots,e_t$ form a rainbow matching.
\end{proof}

\subsection{Simple explicit lower bound constructions}
\label{subsect-simple-lower-bounds}

The proofs of the lower bounds for $F(r,t)$ and $f(r,t)$ in Theorems \ref{thm-fixed-t-F} and \ref{thm-fixed-t-f} in Sections \ref{subsect-fixed-t-F} and Section \ref{subsect-fixed-t-f} were based on probabilistic arguments. In this subsection, we give some relatively simple explicit lower bound constructions for $F(r,t)$ and $f(r,t)$. If $t$ is fixed and $r$ is large , these lower bounds are slightly weaker than the lower bounds in Theorems \ref{thm-fixed-t-F} and \ref{thm-fixed-t-f}. However, if $t$ and $r$ are roughly equal and both reasonably large, the lower bounds in this subsection are actually better than the bounds obtained from Theorem \ref{thm-construction-fixed-r}, \ref{thm-fixed-t-F} or \ref{thm-fixed-t-f}.

\begin{theorem}\label{thm-F-lower-simple}
For any $t\ge 2$, we have $F(r,t)\ge \binom{(tr-2)/2}{r-1}\cdot 2^{r-1}$
for any even $r\ge 2$, and $F(r,t)\ge \binom{(tr-t-2)/2}{r-2}\cdot 2^{r-2}$ for any odd $r\ge 3$.
\end{theorem}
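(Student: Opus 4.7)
I propose to give an explicit construction, treated separately for even and odd $r$; since the two cases follow the same template, I sketch the even case in detail. Partition the $tr$ vertices as $V = \{u, v\} \cup V'$ with $|V'| = tr-2$, and partition $V'$ into pairs $P_1, \ldots, P_{(tr-2)/2}$, writing $P_i = \{x_i, y_i\}$. For each $(r-1)$-subset $S$ of the pair set and each sign vector $\epsilon \in \{0,1\}^{r-1}$, let $T_\epsilon(S)$ denote the corresponding transversal (picking $x_P$ or $y_P$ from each $P \in S$ according to $\epsilon$). For every $S$, I fix once and for all an arbitrary partition of the $r(t-2)/2$ pairs outside $S$ into $t-2$ groups of $r/2$ pairs each, giving pair-union edges $F_1^S, \ldots, F_{t-2}^S$. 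Define
\[
M(S,\epsilon) = \bigl\{\{u\} \cup T_\epsilon(S),\ \{v\} \cup T_{\bar\epsilon}(S),\ F_1^S,\ldots,F_{t-2}^S\bigr\},
\]
where $\bar\epsilon = 1-\epsilon$ is the complemented sign vector. The $u$-edge $\{u\}\cup T_\epsilon(S)$ uniquely recovers $(S,\epsilon)$, so the matchings are pairwise distinct and there are exactly $\binom{(tr-2)/2}{r-1}\cdot 2^{r-1}$ of them.

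The heart of the proof is to show that this family admits no rainbow matching of size $t$. I would proceed by contradiction: suppose $e_1, \ldots, e_t$ is such a rainbow matching, coming from $t$ distinct matchings $M(S_i, \epsilon_i)$. Since each of $u, v$ lies in a single edge, exactly one edge $e_U$ (from $(S_U,\epsilon_U)$) contains $u$ and exactly one $e_V$ (from $(S_V,\epsilon_V)$) contains $v$; the remaining $t-2$ edges are pair-union edges. The crux is a pair-by-pair coverage analysis: for any pair $P \in S_U \setminus S_V$, the edge $e_U$ takes exactly one element of $P$ while $e_V$ takes none, so the other element must come from a pair-union edge; but any such edge consists of whole pairs, so it would contain all of $P$ and overlap $e_U$. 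This forces $S_U \setminus S_V = \emptyset$, and by symmetry $S_U = S_V =: S$. For each $P \in S$, disjointness of $e_U, e_V$ together with full coverage of $P$ forces the transversals $T_{\epsilon_U}(S)$ and $T_{\bar\epsilon_V}(S)$ to pick opposite elements of $P$, which unwinds to $\epsilon_U(P) = \epsilon_V(P)$. Hence $(S_U, \epsilon_U) = (S_V, \epsilon_V)$, meaning $e_U$ and $e_V$ come from the same matching, contradicting the distinctness of the rainbow indices.

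For odd $r \ge 3$, I would use the same template with $t+2$ special vertices $\{u, v, a_1, \ldots, a_t\}$ and $(tr-t-2)/2$ pairs on the remaining vertices. Matchings $M(S, \epsilon)$ are indexed by $(r-2)$-subsets $S$ of pairs and signs $\epsilon \in \{0,1\}^{r-2}$, with $u$-edge $\{u, a_1\} \cup T_\epsilon(S)$, $v$-edge $\{v, a_2\} \cup T_{\bar\epsilon}(S)$, and for $i \ge 3$ an edge $\{a_i\} \cup F_i^S$ consisting of $a_i$ together with $(r-1)/2$ full pairs outside $S$ (with the $F_i^S$ coming from some fixed partition of the pairs outside $S$). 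This yields $\binom{(tr-t-2)/2}{r-2}\cdot 2^{r-2}$ distinct matchings, and the identical coverage argument rules out rainbow matchings. The main obstacle I anticipate is the coverage argument itself: one has to verify carefully that the ``full pair'' rigidity of the non-special edges really prevents any pair partially touched by $e_U$ or $e_V$ from being completed without overlap, and that this rigidity ultimately forces both $S_U = S_V$ and $\epsilon_U = \epsilon_V$. Once this is established, the contradiction with the distinctness of rainbow indices is immediate.
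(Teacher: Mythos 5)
Your construction is precisely the paper's: the two distinguished vertices $u,v$ play the role of the paper's $a,a'$, the pairs $P_i$ correspond to the pairs $\{x,\,tr-1-x\}$, and the $u$-edge/$v$-edge with complementary transversals match the paper's edges $X\cup\{a\}$ and $\{tr-1-x : x\in X\}\cup\{a'\}$; your pair-by-pair coverage argument (forcing $S_U=S_V$ and then $\epsilon_U=\epsilon_V$) is the same rigidity observation the paper uses to force both edges to lie in a single $M_X$. The only cosmetic difference is the odd case, where the paper reduces to the even case for $r-1$ by appending one fresh vertex to each edge, while you write out the resulting construction directly with $u,v,a_1,\dots,a_t$ as the special vertices — these yield the same family, and the exclusion argument carries over unchanged, so your anticipated obstacle is in fact already resolved by the argument you gave.
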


For fixed matching size $t$ and large uniformity $r$, this lower bound is (up to constant factors depending on $t$) on the order of $(t^t/(t-2)^{t-2})^{r/2}/\sqrt{r}$. In other words, the bound is exponential in $r$ with exponential base  $(t^t/(t-2)^{t-2})^{1/2}$ note that this base is only slightly smaller than the base $t^t/(t-1)^{t-1}$ in Theorem \ref{thm-fixed-t-F} (in fact, the quotient between these two bases converges to $1$ as $t$ grows).

In the $r$-partite setting, our simple lower bound has a similar behavior:

\begin{theorem}\label{thm-f-lower-simple}
For any $t\ge 2$, we have $f(r,t)\ge \sqrt{t(t-1)}^{r-1}$ for any odd $r\ge 3$ and $f(r,t)\ge \sqrt{t(t-1)}^{r-2}$ for any even $r\ge 4$.
\end{theorem}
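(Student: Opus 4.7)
The plan is to construct the required families of matchings by induction on $r$, increasing $r$ by two at each step and multiplying the number of matchings by $t(t-1)$. Starting from the trivial base case $r=1$ with a single matching of size $t$, after $k$ inductive steps we would reach an $r$-partite $r$-uniform construction with $r = 2k+1$ and $(t(t-1))^k$ matchings of size $t$ avoiding a rainbow matching of size $t$, which yields the odd-$r$ case. The even-$r$ case would follow by appending one more vertex part of size $t$ to any odd-$r$ construction and extending each old matching using a fixed bijection onto this new part; a straightforward argument (projecting any would-be rainbow matching back to the odd-$r$ construction) shows that this does not introduce any rainbow matching of size $t$ and keeps the number of matchings unchanged.

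For the inductive step going from $r-2$ to $r$, suppose we have matchings $M'_1,\dots,M'_N$ of size $t$ in an $(r-2)$-partite $(r-2)$-uniform hypergraph $H'$ with no rainbow matching of size $t$. Fix a canonical labeling $e'_{j,1},\dots,e'_{j,t}$ of the edges of each $M'_j$ (say, by the label of the vertex in the first part). Add two new vertex parts $V$ and $W$, both identified with $\mathbb{Z}_t$, and for each $j\in\{1,\dots,N\}$ and each pair $(a,b)$ in some chosen set $S\subseteq \mathbb{Z}_t\times\mathbb{Z}_t$ of size $t(t-1)$, form a new matching $M_{j,a,b}$ whose $i$-th edge is $e'_{j,i}\cup\{v_{i+a},w_{i+b}\}$, with indices taken modulo $t$. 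This produces a family of $N\cdot t(t-1)$ matchings of size $t$ in the enlarged hypergraph.

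The crux is to verify that the extended family contains no rainbow matching of size $t$. Any putative rainbow matching restricts to $t$ pairwise disjoint edges in $H'$ drawn from the $M'_j$; if their indices were all distinct we would contradict the inductive hypothesis, so some index $j$ must repeat. One then has to derive a contradiction by combining the perfect-cover constraints on the new parts (which force $\{i+a\}$ and $\{i+b\}$ to be permutations of $\mathbb{Z}_t$ across the chosen edges) with the distinctness of the triples $(j,a,b)$, which forces the pairs $(a,b)$ used for a repeated $j$ to be distinct. The main obstacle lies here: the naïve choice $S=\{(a,b):a\ne b\}$ already admits genuine rainbow matchings at $t=3$, so $S$ must be chosen with more care (for instance via an additional algebraic constraint), and it may be necessary to strengthen the inductive hypothesis to a quantitative ``bounded-multiplicity'' version of the no-rainbow property that is preserved by the extension.
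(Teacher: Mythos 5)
There is a genuine gap here, and you have in fact flagged it yourself: the entire content of the theorem lives in the verification that the extended family has no rainbow matching, and that verification is not carried out. Your inductive step reduces (via the strengthened hypothesis) to the following concrete question: does there exist a set $S\subseteq \mathbb{Z}_t\times\mathbb{Z}_t$ with $|S|=t(t-1)$ such that the only functions $i\mapsto (a_i,b_i)\in S$ for which both $i\mapsto i+a_i$ and $i\mapsto i+b_i$ are permutations of $\mathbb{Z}_t$ are the constant functions? You observe that $S=\{(a,b):a\neq b\}$ fails for $t=3$ (correctly --- e.g.\ $(a_i,b_i)=(i,i+1)$ gives two permutations $2i$ and $2i+1$), but you do not exhibit any $S$ that works, for any $t$, nor do you formulate and verify the ``bounded-multiplicity'' strengthening of the induction hypothesis that you acknowledge is needed once indices can repeat without all being equal. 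For $t=3$ one can check that $S=\{(a,b):a+b\neq 0\}$ happens to work, but already for $t=4$ this choice admits non-constant solutions (e.g.\ $a=(0,2,0,2)$, $b=(1,3,1,3)$ indexed by $i=0,1,2,3$ gives two permutations with only two distinct pairs, so the strengthened property fails), so the construction of $S$ is genuinely delicate and $t$-dependent. Moreover, restricting the extension to \emph{cyclic shifts} of the two new parts is an artificial constraint: nothing forces the new coordinates to be of the form $i+a$, and this rigidity is precisely what makes finding a good $S$ hard. As it stands, the proposal is a plan with its central lemma unproved.

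For comparison, the paper avoids induction entirely. For odd $r$ it takes one special vertex part labeled $a_1,a_2,b_1,\dots,b_{t-2}$ and pairs up the remaining $r-1$ parts; a matching $M_{(x_1,\dots,x_{r-1})}$ is indexed by a tuple with $x_1\neq x_2,\ x_3\neq x_4,\dots$, its $a_1$-edge is $(x_1,\dots,x_{r-1})$, its $a_2$-edge is the coordinate-wise swap $(x_2,x_1,x_4,x_3,\dots)$, and its $b_i$-edges are ``diagonal'' (equal entries within each pair of parts). The punchline is that in any perfect rainbow matching the edge through $a_1$ forces the edge through $a_2$ to be its swap, and these two edges occur together only in a single matching of the family. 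If you want to salvage your inductive scheme, the right move is probably to replace cyclic shifts by a pair of bijections $(\pi,\pi\circ\tau)$ for a fixed transposition $\tau$, which is essentially how the paper's $r=3$ case is parametrized --- but at that point you have rediscovered the paper's construction rather than simplified it.
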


For fixed $t$ and large $r$, this lower bound is (up to constant factors depending on $t$) equal to $\sqrt{t(t-1)}^{r}$. Here, the exponential base $\sqrt{t(t-1)}$ is only slightly smaller than the exponential base $t$ in Theorem \ref{thm-fixed-t-f}.

We remark that our constructions showing Theorem \ref{thm-F-lower-simple} and Theorem \ref{thm-f-lower-simple} both use hypergraphs with $tr$ vertices. In other words, both constructions actually use perfect matchings. This means that the lower bounds in Theorem \ref{thm-F-lower-simple} and Theorem \ref{thm-f-lower-simple} also apply to the variant of Problem where one requires $M_1,\dots,M_N$ to be perfect matchings (and where one is looking for a rainbow perfect matching).

As mentioned above, our proofs of Theorem \ref{thm-F-lower-simple} and Theorem \ref{thm-f-lower-simple} rely on fairly simple explicit constructions. We start by proving Theorem \ref{thm-F-lower-simple}.

\begin{proof}[Proof of Theorem \ref{thm-F-lower-simple}]
First, assume that $r\ge 2$ is even. Consider $tr$ vertices labeled by $1,\dots,tr-2$, $a$ and $a'$.

Let $\mathcal{X}$ be the collection of all subsets $X\su \{1,\dots,tr-2\}$ of size $r-1$ such that there do not exist two elements $x,x'\in X$ with $x+x'=tr-1$. In other words, $\mathcal{X}$ be the collection of subsets $X\su \{1,\dots,tr-2\}$ with $|X|=r-1$ containing at most one element from each of the $(tr-2)/2$ pairs $\{x,tr-1-x\}$ for $1\le x\le (tr-2)/2$. We have $|\mathcal{X}|=\binom{(tr-2)/2}{r-1}\cdot 2^{r-1}$, since there are $\binom{(tr-2)/2}{r-1}$ possibilities which $r-1$ pairs $X$ should contain an element of, and $2^{r-1}$ possibilities to choose an element from each of these $r-1$ pairs.

We will now define matchings $M_{X}$ of size $t$ for all $X\in \mathcal{X}$, and it is sufficient to show that these matchings do not have a rainbow matching of size $t$.

For any $X\in \mathcal{X}$, consider the two sets $X\cup \{a\}$ and $\{tr-1-x\mid x\in X\}\cup \{a'\}$, which both have size $r$ and are disjoint (by the definition of $\mathcal{X}$). Let these two sets be the first two edges of $M_X$. The remaining vertices outside these two edges are a union of $r(t-2)/2$ pairs of the form $\{x,tr-1-x\}$.  Let us divide these $r(t-2)/2$ pairs into $t-2$  groups of size $r/2$ (in an arbitrary way). For each group the union of the pairs in the group is a set of size $r$, and let this set be an edge of $M_X$. Then in total $M_X$ has $2+(t-2)=t$ edges as desired (and it is easy to see that the $t$ edges of $M_X$ are pairwise disjoint).

Let us now check that the matchings $M_{X}$ for $X\in \mathcal{X}$ do not have a rainbow matching of size $t$. Since there are only $tr$ vertices in total, every matching of size $t$ must be a perfect matching, covering all vertices. So if there was a rainbow matching of size $t$, one of its edges would need to contain $a$, so it would need to be of the form $X\cup \{a\}$ for some $X\in \mathcal{X}$. Now, consider the $r-1$ vertices of the form $tr-1-y$ for $y\in X$ (which are all outside of the edge $X\cup \{a\}$). In the rainbow matching, none of these vertices can be covered by an edge consisting of a union of pairs of the form $\{x,tr-1-x\}$ (indeed, $tr-1-y$ would need to be in the same pair as $y$, but $y$ was already covered by the edge $X\cup \{a\}$). So each of these $r-1$ vertices needs to be covered by an edge containing $a'$. In other words, the rainbow matching must contain the edge $\{tr-1-y\mid y\in X\}\cup \{a'\}$. However, both the edges $X\cup \{a\}$ and $\{tr-1-y\mid y\in X\}\cup \{a'\}$ occur only in the matching $M_X$, so there cannot be a rainbow matching containing both of these edges.

This finishes the proof for even $r\ge 2$. If $r\ge 3$ is odd, we can use the above construction for $r-1$ (which is even) to obtain matchings $M_1,\dots,M_N$ of size $t$ in an $(r-1)$-uniform hypergraph on $t(r-1)$ vertices without a rainbow matching of size $t$, where
\[N=\binom{(t(r-1)-2)/2}{(r-1)-1}\cdot 2^{(r-1)-1}=\binom{(tr-t-2)/2}{r-2}\cdot 2^{r-2}.\]
Let us now take $t$ additional vertices, and for each matching $M_j$ for $j \in \{1,\dots,N\}$, let us add one of these additional vertices to each edge of $M_j$ (such that each of the $t$ additional vertices is added to exactly one of the $t$ edges of $M_j$). This way, we obtain $N$ matchings of size $t$ in an $r$-uniform hypergraph on $tr$ vertices, and there is still no rainbow matching of size $t$.
\end{proof}

Finally, let us prove Theorem \ref{thm-f-lower-simple}, which relies on a similar construction.

\begin{proof}[Proof of Theorem \ref{thm-f-lower-simple}]
This time, let us first  assume that $r\ge 3$ is odd. Let us consider a vertex set of size $tr$, divided into $r$ parts of size $t$. We will construct matchings in the complete $r$-partite $r$-uniform hypergraph $\mathcal{H}$ on this vertex set with this $r$-partition.

Inside each of the first $r-1$ parts of the vertex set, let us label the vertices by $1,\dots,t$. In the last part of the vertex set, let us label the vertices by $a_1,a_2,b_1,\dots,b_{t-2}$. An edge in $\mathcal{H}$ corresponds to an $r$-tuple in $\{1,\dots,t\}^{r-1}\times \{a_1,a_2,b_1,\dots,b_{t-2}\}$.

Now, let $\mathcal{X}\su \{1,\dots,t\}^{r-1}$ be the collection of all $(r-1)$-tuples $(x_1,\dots,x_{r-1})\in \{1,\dots,t\}^{r-1}$ with $x_1\neq x_2, x_3\neq x_4,\dots, x_{r-2}\neq x_{r-1}$ (i.e.\ the collection of all $(r-1)$-tuples $(x_1,\dots,x_{r-1})$ where the first two entries are distinct, the next two entries are distinct, and so on). Note that $|\mathcal{X}|=(t(t-1))^{(r-1)/2}=\sqrt{t(t-1)}^{r-1}$.

Let us now define matchings $M_{(x_1,\dots,x_{r-1})}$ of size $t$ in the hypergraph $\mathcal{H}$ for all $(x_1,\dots,x_{r-1})\in\mathcal{X}$. It will then suffice to show that there is no rainbow matching.

For any $(x_1,\dots,x_{r-1})\in\mathcal{X}$, we begin to form the matching $M_{(x_1,\dots,x_{r-1})}$ by taking the two edges of $\mathcal{H}$ corresponding to the two $r$-tuples $(x_1,\dots,x_{r-1},a_1)$ and $(x_2,x_1,x_4,x_3,\dots,x_{r-1},x_{r-2},a_2)$. Note that these two edges are disjoint (by the definition of $\mathcal{X}$). In each of the $r$ vertex parts there are $t-2$ remaining vertices not covered by these two edges. In the first and the second vertex part, the labels of the uncovered vertices agree (namely, these are the labels in the set $\{1,\dots,t\}\setminus\{x_1,x_2\}$). Similarly, in the third and fourth vertex part, the labels of the uncovered vertices agree, and so on. Hence we can cover the remaining vertices by $t-2$ pairwise disjoint edges corresponding to $r$-tuples of the form $(y_1,\dots,y_{r-1},b_i)\in \{1,\dots,t\}^{r-1}\times \{b_1,\dots,b_{t-2}\}$ with $y_1=y_2, y_3=y_4,\dots,y_{r-2}=y_{r-1}$. Let us add these $t-2$ edges to $M_{(x_1,\dots,x_{r-1})}$, then we indeed obtain a matching of size $t$.

Now suppose for contradiction that there is a rainbow matching of size $t$. Since there are only $tr$ vertices in total, this rainbow matching would need to cover all vertices. Then vertex $a_1$ in the last vertex part is covered by an edge corresponding to an $r$-tuple of the form $(x_1,\dots,x_{r-1},a_1)$ for some $(x_1,\dots,x_{r-1})\in\mathcal{X}$. But now let us look at the vertex in the first vertex part with label $x_2$. This vertex cannot be covered by an edge containing one of the vertices $b_1,\dots,b_{t-2}$, since any such edge must be of the form $(y_1,\dots,y_{r-1},b_i)$ with $y_1=y_2,y_3=y_4,\dots,y_{r-2}=y_{r-1}$ (and so it would also need to contain the vertex with label $x_2$ in the second vertex part, which is already covered by the edge corresponding to $(x_1,\dots,x_{r-1},a_1)$). Hence the edge covering the vertex in the first part with label $x_2$ must contain $a_2$. The same argument can be made for the vertex in the second part with label $x_1$, the vertex in the third part with label $x_4$, the vertex in the fourth part with label $x_3$, and so on (finishing with the vertex in the $(r-1)$-th part with label $x_{r-2}$). In the rainbow matching, each of these $r-1$ vertices must be covered by an edge that also contains $a_2$. In other words, the rainbow matching must contain the edge corresponding to the $r$-tuple  $(x_2,x_1,x_4,x_3,\dots,x_{r-1},x_{r-2},a_2)$, and it also contains the  edge corresponding to the $r$-tuple  $(x_1,\dots,x_{r-1},a_1)$. However, both of these edges only occur in the matching $M_{(x_1,\dots,x_{r-1})}$, so there cannot be a rainbow matching containing both of these edges.

This finishes the proof for odd $r\ge 3$. For even $r\ge 4$, we can take the above construction for $r-1$ (which is odd), which gives $\sqrt{t(t-1)}^{r-2}$ matchings of size $t$ in some $(r-1)$-partite $(r-1)$-uniform hypergraph on $t(r-1)$ vertices. Let us now add an additional vertex part with $t$ vertices and for each of the $\sqrt{t(t-1)}^{r-2}$ matchings, let us add one vertex from the new part to each edge (such that each of the $t$ vertices in the new part is added to exactly one of the $t$ edges of each matching). This way, we obtain  $\sqrt{t(t-1)}^{r-2}$ matchings of size $t$ in an $r$-partite $r$-uniform hypergraph on $tr$ vertices, without a  rainbow matching of size $t$.
\end{proof}

\section{Concluding remarks}

In this paper we study the maximum possible number of matchings of size $t$ (each colored in a different color) in a $r$-uniform hypergraph without a rainbow matching of size $t$. This maximum possible number is denoted by $F(r,t)$. Furthermore, $f(r,t)$ is defined the answer to the analogous problem with the additional assumption that the underlying $r$-uniform hypergraph is $r$-partite. The problem of estimating the functions $F(r,t)$ and $f(r,t)$ has attracted the attention of many different researchers \cite{aharoni-berger,abchs,abkk,aharoni-holzman-jiang,alon,drisko,glebov-sudakov-szabo,correia-sudakov-tomon}. For fixed uniformity $r\ge 3$, we determine $F(r,t)$ and $f(r,t)$ up to constant factors (depending on $r$), showing that both are on the order of $t^r$. This in particular disproves a conjecture of Glebov--Sudakov--Szab\'{o} \cite{glebov-sudakov-szabo}. Furthermore, we determine $F(r,t)$ and $f(r,t)$ up to lower-order terms in the opposite regime, for fixed $t\ge 2$ and large uniformity $r$.

It would be interesting to also understand the behavior of the functions $f(r,t)$ and $F(r,t)$ when both $r$ and $t$ are growing. In Section \ref{subsect-simple-lower-bounds}, we give some additional lower bounds for $f(r,t)$ and $F(r,t)$. These bounds are better than our lower bounds in our main theorems stated in the introduction if $r$ and $t$ are both large and of similar order (if, say, $r$ and $t$ are within constant factors of each other). In particular, if $r=t$, our lower bounds in Section \ref{subsect-simple-lower-bounds} are matching the upper bound $F(r,t)\le (t-1)\binom{tr}{r}$ due to Munh\'{a} Correia--Sudakov--Tomon \cite{correia-sudakov-tomon} and the upper bound $f(r,t)\le (t-1)\cdot t^r$ in Theorem \ref{thm-fixed-t-f} up to polynomial factors in $r=t$.

Finally, we remark that all lower bounds in this paper are based on (partly probabilistic) constructions where $M_1,\dots,M_N$ are matchings of size $t$ in an $r$-uniform hypergraph with $tr$ vertices. In other words, all our lower bounds are based on constructions where $M_1,\dots,M_N$ are \emph{perfect} matchings. It is natural to also ask about Problem \ref{problem-F} (as well as its $r$-partite analogue) in the special setting where $M_1,\dots,M_N$ are perfect matchings, and one is looking for a rainbow perfect matching. Of course, all of the upper bounds for $F(r,t)$ and $f(r,t)$ in particular apply to this special case of perfect matchings. Given the constructions we use in this paper, our lower bounds also all apply to this variant of the problem.

\end{document}